% amstex

\documentclass[reqno,a4paper]{amsart}
\usepackage{amssymb}
\usepackage{amsmath}
\setlength{\topmargin}{0 pt} 
\setlength{\textwidth}{12.5cm}
\setlength{\parindent}{0.8cm}
\newcommand{\angles}[1]{\langle #1 \rangle}

\frenchspacing
\newcommand{\half}{\frac{1}{2}}
\newcommand{\R}{\mathbb{R}}

\begin{document} 
\newtheorem{prop}{Proposition}[section]
\newtheorem{Def}{Definition}[section] \newtheorem{theorem}{Theorem}[section]
\newtheorem{lemma}{Lemma}[section] \newtheorem{Cor}{Corollary}[section]

\title[Maxwell-Klein-Gordon in Lorenz gauge]{\bf Local well-posedness for low regularity data for the higher-dimensional Maxwell-Klein-Gordon system in Lorenz gauge}
\author[Hartmut Pecher]{
{\bf Hartmut Pecher}\\
Fakult\"at f\"ur  Mathematik und Naturwissenschaften\\
Bergische Universit\"at Wuppertal\\
Gau{\ss}str.  20\\
42119 Wuppertal\\
Germany\\
e-mail {\tt pecher@math.uni-wuppertal.de}}
\date{}

\begin{abstract}

The Cauchy problem for the Maxwell-Klein-Gordon equations in Lorenz gauge in $n$ space dimensions ($n \ge 4$) is shown to be locally well-posed for low regularity (large) data. The result relies on the null structure for the main bilinear terms which was shown to be not only present in Coulomb gauge but also in Lorenz gauge by Selberg and Tesfahun, who proved global well-posedness for finite energy data in three space dimensions. This null structure is combined with product estimates for wave-Sobolev spaces. Crucial for the improvement are the solution spaces introduced by Klainerman-Selberg. Preliminary results were already contained in arXiv:1705.00599.
\end{abstract}
\maketitle
\renewcommand{\thefootnote}{\fnsymbol{footnote}}
\footnotetext{\hspace{-1.5em}{\it 2000 Mathematics Subject Classification:} 
35Q61, 35L70 \\
{\it Key words and phrases:} Maxwell-Klein-Gordon,  
local well-posedness, Fourier restriction norm method}

\normalsize 
\setcounter{section}{0}
\section{Introduction and main results}
A classical problem of mathematical physics is the Cauchy problem for the Maxwell-Klein-Gordon system :
\begin{align}
\label{1}
\partial^{\nu} F_{\mu \nu} & =  j_{\mu} \\
\label{2}
D^{(A)}_{\mu} D^{(A)\mu} \phi & = m^2 \phi \, ,
\end{align}
where $m \in \R$ is a constant and
\begin{align}
\label{3}
F_{\mu \nu} & := \partial_{\mu} A_{\nu} - \partial_{\nu} A_{\mu} \\
\label{4}
D^{(A)}_{\mu} \phi & := \partial_{\mu} - iA_{\mu} \phi \\
\label{5}
j_{\mu} & := Im(\phi \overline{D^{(A)}_{\mu} \phi}) = Im(\phi \overline{\partial_{\mu} \phi}) + |\phi|^2 A_{\mu} 
\end{align}
with data
$$\phi(x,0) = \phi_0(x) \quad,\quad \partial_t \phi(x,0) = \phi_1(x) \quad,\quad F_{\mu \nu}(x,0) =  F^0_{\mu \nu}(x)$$  $$A_{\nu}(x,0) = a_{0 \nu}(x)\quad ,\quad \partial_t A_{\nu}(x,0) = \dot{a}_{0 \nu}(x) \, .$$
Here $F_{\mu \nu} : {\mathbb R}^{n+1} \to {\mathbb R}$ denotes the electromagnetic field, $\phi : {\mathbb R}^{n+1} \to {\mathbb C}$ a scalar field and $A_{\nu} : {\mathbb R}^{n+1} \to {\mathbb R}$ the potential. We use the notation $\partial_{\mu} = \frac{\partial}{\partial x_{\mu}}$, where we write $(x^0,x^1,...,x^n) = (t,x^1,...,x^n)$ and also $\partial_0 = \partial_t$ and $\nabla = (\partial_1,...,\partial_n)$. Roman indices run over $1,...,n$ and greek indices over $0,...,n$ and repeated upper/lower indices are summed. Indices are raised and lowered using the Minkowski metric $diag(-1,1,...,1)$.

The Maxwell-Klein-Gordon system describes the motion of a spin 0 particle with mass $m$ self-interacting with an electromagnetic field.

 The potential $A$ is not uniquely determined but one has gauge freedom. The Maxwell-Klein-Gordon equation is namely invariant under the gauge transformation
$\phi \to \phi' = e^{i\chi}\phi$ , $A_{\mu} \to A'_{\mu} = A_{\mu} + \partial_{\mu} \chi$
for any $\chi: {\mathbb R}^{n+1} \to {\mathbb R}$.

Most of the results obtained so far were given in Coulomb gauge $\partial^j A_j = 0$. The seminal paper of Klainerman and Machedon \cite{KM} showed global well-posedness in energy space and above, i.e. for data $\phi_0 \in H^s$ , $\phi_1 \in H^{s-1}$ , $a_{0 \nu} \in H^s$ , $\dot{a}_{0 \nu} \in H^{s-1}$ with $s \ge 1$ in $n=3$ dimensions improving earlier results of Eardley and Moncrief \cite{EM} for smooth data. They used that the nonlinearities fulfill a null condition in the case of the Coulomb gauge. This global well-posedness result was improved by Keel, Roy and Tao \cite{KRT}, who had only to assume $s > \frac{\sqrt 3}{2}$. Local well-posedness for low regularity data was shown for small data by Cuccagna \cite{C} for $s > 3/4$ , where Selberg \cite{S} remarked that the small data assumption can be removed, and finally almost down to $s > 1/2$ , which is the critical regularity with respect to scaling, in the case of small data by Machedon and Sterbenz \cite{MS}, all these results for three space dimensions and in Coulomb gauge. 

In two space dimensions in Coulomb gauge Czubak and Pikula \cite{CP} proved local well-posedness provided that $\phi_0 \in H^s$ , $\phi_1 \in H^{s-1}$ , $a_{0 \nu} \in H^r$ , $\dot{a}_{0 \nu} \in H^{r-1}$,  where $1 \ge s=r > \frac{1}{2}$ or $s=\frac{5}{8}+\epsilon$ , $r=\frac{1}{4}+\epsilon$. 

In four space dimensions Selberg \cite{S} showed local well-posedness in Coulomb gauge for large data and $s>1$. Recently Krieger, Sterbenz and Tataru \cite{KST} showed global well-posedness for data with small energy data ($s=1$) for $n=4$, which is the critical space. For space dimension $n \ge 6$ and small critical Sobolev norm for the data global well-posedness was shown by Rodnianski and Tao \cite{RT}. In general the problem seems to be easier in higher dimensions. In temporal gauge local well-posedness was shown for $n=3$ and small data with $ s > 3/4$ for the more general Yang-Mills equations by Tao \cite{T}.

From now on we exclusively consider the Maxwell-Klein-Gordon equations in Lorenz gauge $\partial^{\mu} A_{\mu} = 0$ ,  which was considered much less in the literature because the nonlinear term $Im(\phi \overline{\partial_{\mu} \phi})$ has no null structure. There is a result by Moncrief \cite{M} in two space dimensions for smooth data, i.e. $s \ge 2$. In three space dimensions the most important progress was made by Selberg and Tesfahun \cite{ST} who were able to circumvent the problem of the missing null condition in the equations for $A_{\mu}$ by showing that the decisive nonlinearities in the equations for $\phi$ as well as $F_{\mu \nu}$ fulfill such a null condition which allows to show that global well-posedness holds for finite energy data, i.e. $\phi_0 \in H^1$, $\phi_1 \in L^2$ , $F^0_{\mu \nu} \in L^2$ , $a_{0\nu} \in \dot{H}^1$ , $\dot{a}_{0 \nu} \in L^2$, and three space dimensions, where $\phi \in C^0({\mathbb R},H^1) \cap C^1({\mathbb R},L^2)$ and $F_{\mu \nu} \in C^0({\mathbb R},L^2)$. The potential possibly loses some regularity compared to the data but as remarked by the authors this is not the main point because one is primarily interested in the regularity of $\phi$ and $F_{\mu \nu}$. Persistence of higher regularity for the solution also holds.

A null structure in Lorenz gauge was first detected for the Maxwell-Dirac system by d'Ancona, Foschi and Selberg \cite{AFS1}.

The present paper is a continuation to dimensions $n \ge 4$ of the author's paper \cite{P}, where local well-posedness was shown for large data in dimension $n=2$ or $n=3$ and (essentially) $ s > \half$ for $n=2$ and $ s > \frac{3}{4}$ for $ n = 3$ .
A preprint of a preceding version was posted in \cite{P1}.  The basis for these results is on the one hand the paper \cite{ST}.  The solution spaces are Bourgain-Klainerman-Machedon spaces $X^{s,b}$. If one wants to lower the regularity further these spaces seem to be not suitable. Thus on the other hand  we rely on the paper by Klainerman-Selberg \cite{KS}, who considered a model problem for Maxell-Klein-Gordon, where the gauge condition is ignored, and where local well-posedness in space dimensions $n \ge 4$ is proved down to the critical range $s > \frac{n}{2}-1$. We treat the full Maxwell-Klein-Gordon system in Lorenz gauge. We appropriately modify the solution spaces in \cite{KS} and obtain local well-posedness for $n \ge 4$ and $s > \frac{n}{2}-\frac{5}{6}$ . More precisely we obtain $\phi \in C^0 ([0,T],H^s(\R^n)) \cap C^1 ([0,T],H^{s-1}(\R^n))$ , $F_{\mu \nu} \in C^0 ([0,T],H^{s-1}(\R^n)) \cap C^1 ([0,T],H^{s-2}(\R^n))$ and $\nabla A_{\mu} , \partial_t A_{\mu} \in  C^0 ([0,T],H^{r-1}(\R^n)) $ for a certain $r$ with $s \ge r > \frac{n}{2}-1$ 
, so that similar to the result in \cite{ST} the potential $A_{\mu}$ is less regular than $\phi$, which is not so important, however. Of course, the proof uses the null structure of the quadratic nonlinear terms with the exception of the term $Im(\phi \overline{\partial_{\mu} \phi})$. It is this term which is responsible for the remaining gap of $\frac{1}{6}$ to the critical regularity $s= \frac{n}{2}-1$. The bilinear estimates in wave-Sobolev spaces were claimed in arbitrary dimension $n$ and proven for $n=2$ and $n=3$ by d'Ancona, Foschi and Selberg in \cite{AFS3} and \cite{AFS2}, respectively. We prove a special case in dimension $n \ge 4$,  based mainly on a result by Klainerman and Tataru \cite{KT}, and also rely on much more general results by Lee-Vargas \cite{LV} (cf. Prop. \ref{LV}), who proved $L^q_t L^r_x$ - estimates for a product of solutions of the wave equation.

We now formulate our main result. We assume the Lorenz condition
\begin{equation}
\label{6}
\partial^{\mu} A_{\mu} = 0
\end{equation}
and Cauchy data
\begin{align}
\label{7}
\phi(x,0) &= \phi_0(x) \in H^s \quad , \quad \partial_t \phi(x,0) = \phi_1(x) \in H^{s-1} \, , 
\end{align}
\begin{align}
\label{8}
F_{\mu \nu}(x,0)& = F^0_{\mu \nu}(x)  \in H^{s-1} \, , 
\end{align}
and
\begin{equation}
\label{9}
A_{\nu}(x,0) = a_{0 \nu}(x) \quad , \quad \partial_t A_{\nu}(x,0) = \dot{a}_{0 \nu}(x) \, ,
\end{equation} 
which fulfill the following conditions
\begin{equation}
\label{10}
a_{00} = \dot{a}_{00} = 0 \, , 
\end{equation}
\begin{equation}
\label{11}
\nabla a_{0j} \in H^{s-1} \, , \,  \dot{a}_{0j} \in H^{s-1} \, ,
\end{equation}
\begin{equation}
\label{12}
\partial^k a_{0k} = 0 \, ,
\end{equation}
\begin{equation}
\label{13}
\partial_j a_{0k} - \partial_k a_{0j} = F^0_{jk} \, ,
\end{equation}
\begin{equation}
\label{14}
\dot{a}_{0k} = F^0_{0k} \, , 
\end{equation}
\begin{equation}
\label{15}
\partial^k F^0_{0k}  = Im(\phi_0 \overline{\phi}_1) \, .
\end{equation}
(\ref{10}) may be assumed because otherwise the Lorenz condition does not determine the potential uniquely. (\ref{12}) follow from the Lorenz condition (\ref{6}) in connection with (\ref{10}). (\ref{13}) follows from (\ref{3}), similarly (\ref{14}) from (\ref{3}) and (\ref{10}). (\ref{1}) requires
$$ \partial^k F^0_{0k} = j_0(0) = Im(\phi_0 \overline{\phi}_1) + |\phi_0|^2 a_{00} = Im(\phi_0 \overline{\phi}_1) \, $$
thus (\ref{15}). By (\ref{12}) we have
$$ \Delta a_{0j} = \partial^k \partial_k a_{0j} = \partial^k(\partial^j a_{0k} - F^0_{jk}) = - \partial^k F^0_{jk} \, , $$
so that $a_{0j}$ is uniquely determined as
$$ a_{0j} = D^{-2} \partial^k F^0_{jk} $$
and fulfills (\ref{11}).

We define the wave-Sobolev spaces $H^{s,b}$ as the completion of  $\mathcal{S}({\mathbb R}^{n+1})$ with respect to the norm
$$ \|u\|_{H^{s,b}} =  \| \langle \xi \rangle^s \langle  |\tau| - |\xi| \rangle^b \widehat{u}(\tau,\xi) \|_{L^2_{\tau \xi}}  $$
and $H^{s,b}[0,T]$ as the space of the restrictions to $[0,T] \times \mathbb{R}^n$.
 Similarly the space $\dot{H}^{s,b}$ is defined, where $\langle \xi \rangle$ is replaced by $|\xi|$ .

Let $\Lambda^{\alpha}$ , $D^{\alpha}$ , $D_-^{\alpha}$ , $D_+^{\alpha}$ and $\Lambda_+^{\alpha}$ be the multipliers with symbols $\langle \xi \rangle^{\alpha}$, 
$|\xi|^{\alpha}$ ,
 $ ||\tau|-|\xi||^{\alpha}$ , $ (|\tau |+|\xi|)^{\alpha}$ and $\langle |\tau|+|\xi| \rangle^{\alpha}$ , respectively, where $ \langle \, \cdot \, \rangle = (1+|\, \cdot \,|^2)^{\frac{1}{2}}$ .

$\Box = \partial_t^2 - \Delta$ is the d'Alembert operator. $a+ = a+\epsilon$ for a sufficiently small $\epsilon > 0$ . 

We also use the notation $ u \precsim v $ , if $|\widehat{u}| \lesssim \widehat{v}$ . \\[1em]
Our main theorem reads as follows:
\begin{theorem}
\label{Theorem1}
Let $n \ge 4$ . Assume $s > \frac{n}{2}-\frac{5}{6} $ , $r > \frac{n}{2}-1$ and $s \ge r \ge s-\half$ , $3s-2r > \frac{n-1}{2}$ , $2r-s > \frac{n-3}{2}$ . 
 The data are assumed to fulfill (\ref{7}) - (\ref{15}). Then the problem (\ref{1}) - (\ref{6}) has a unique local solution
$$ \phi \in H^{s,\frac{1}{2}+}[0,T] \, , \, \partial_t \phi \in H^{s-1,\frac{1}{2}+}[0,T] $$
and
$$ F_{\mu \nu} \in H^{s-1,\frac{1}{2}+}[0,T] \, , \, \partial_t F_{\mu \nu} \in H^{s-2,\frac{1}{2}+}[0,T]$$
relative to a potential $A=(A_0,A_1,...,A_n)$ with
$D A ,$ $ \partial_t A \in H^{r-1,\half+}[0,T]$ .
\end{theorem}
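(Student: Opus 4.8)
The strategy is the standard one for local well-posedness via the Fourier restriction norm method, following closely the framework of Selberg--Tesfahun~\cite{ST} for the null structure and of Klainerman--Selberg~\cite{KS} for the choice of solution spaces, but carried out in the higher-dimensional setting with the regularity exponents $s > \frac{n}{2}-\frac{5}{6}$, $r > \frac{n}{2}-1$. First I would recast the system as a first-order-in-$t$ evolution for the quantities actually being estimated. Write $\Box \phi = -2iA^\mu \partial_\mu \phi + (A^\mu A_\mu)\phi - m^2\phi$ from expanding $D^{(A)}_\mu D^{(A)\mu}\phi = m^2\phi$, and derive wave equations for the components of $F_{\mu\nu}$ and for the potential $A_\mu$: in Lorenz gauge $\Box A_\mu = -j_\mu = -\mathrm{Im}(\phi\overline{\partial_\mu\phi}) - |\phi|^2 A_\mu$, and differentiating $\partial^\nu F_{\mu\nu}=j_\mu$ together with the Bianchi-type identity gives $\Box F_{\mu\nu} = \partial_\mu j_\nu - \partial_\nu j_\mu$. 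The point emphasized in~\cite{ST} is that in $\partial_\mu j_\nu - \partial_\nu j_\mu$ the bad term $\mathrm{Im}(\phi\overline{\partial_\mu\phi})$ reorganizes so that the genuinely quadratic-derivative interactions appearing in the equations for $\phi$ and for $F_{\mu\nu}$ carry a null form $Q_{\mu\nu}$-type structure, while only the $A^\mu\partial_\mu\phi$ term in the $\phi$-equation retains a piece $\mathrm{Im}(\phi\overline{\partial_\mu\phi})$ coupling into $A$ without null structure — this is the source of the $\frac16$ gap. I would set up iteration spaces: $\phi$ in a Klainerman--Selberg-type space built on $H^{s,1/2+}$ (with the additional low-modulation/angular refinements from~\cite{KS} that make the endpoint-type estimates work in $n\ge4$), $F_{\mu\nu}$ in $H^{s-1,1/2+}$, and $A_\mu$ (more precisely $DA_\mu, \partial_t A_\mu$) in $H^{r-1,1/2+}$.

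The core is then a fixed-point argument on a small time interval $[0,T]$. I would take the Duhamel representation for each equation, use the standard $X^{s,b}$ energy and time-localization estimates (the $\|\cdot\|_{H^{s,b}[0,T]}$ linear theory, loss $T^{\epsilon}$ for $b=\frac12+$, inhomogeneous estimate to $b-1$), and reduce closing the iteration to a finite list of multilinear estimates: roughly, for the $\phi$-equation, $\|A^\mu\partial_\mu\phi\|_{H^{s-1,-1/2+}} \lesssim \|DA\|_{H^{r-1,1/2+}}\|\phi\|_{H^{s,1/2+}}$ with the null-form gain on the part that has it, $\|(A^\mu A_\mu)\phi\|_{H^{s-1,-1/2+}}$ handled as a trilinear term; for the $F$-equation, $\|Q_{\mu\nu}(\phi,\overline\phi)\|_{H^{s-2,-1/2+}}\lesssim \|\phi\|_{H^{s,1/2+}}^2$ plus the cubic piece $\||\phi|^2 A_\mu\|$-type terms with a derivative; and for the $A$-equation, $\|\mathrm{Im}(\phi\overline{\partial_\mu\phi})\|_{H^{r-2,-1/2+}}\lesssim\|\phi\|_{H^{s,1/2+}}^2$ together with $\||\phi|^2 A_\mu\|_{H^{r-2,-1/2+}}$. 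The arithmetic conditions in the hypothesis — $s\ge r\ge s-\frac12$, $3s-2r>\frac{n-1}{2}$, $2r-s>\frac{n-3}{2}$ — are exactly what is needed so that the relevant products of wave-Sobolev spaces close; the estimates without null structure use the d'Ancona--Foschi--Selberg-type product estimates in $H^{s,b}$ (Prop.~\ref{LV} and the special case proved in the excerpt in dimension $n\ge4$ via Klainerman--Tataru~\cite{KT} and Lee--Vargas~\cite{LV}), while the null-form interactions gain the extra half-derivative of modulation needed to push $\phi$ down to $s>\frac{n}{2}-\frac56$.

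After the multilinear estimates are in place, the contraction is routine: for $T$ small the nonlinear map is a self-map of a ball in the product of the iteration spaces and a contraction there (the same estimates applied to differences give the Lipschitz bound, using $H^{s,1/2+}$-valued continuity in $t$ to get the $C^0$/$C^1$-in-time statements), so one obtains a unique fixed point; persistence and the embeddings $H^{s,1/2+}[0,T]\hookrightarrow C^0([0,T],H^s)$, with the $\partial_t$-version landing in $C^1([0,T],H^{s-1})$, yield the stated regularity of $\phi$ and $F_{\mu\nu}$, and the $A$-equation gives $DA,\partial_t A\in C^0([0,T],H^{r-1})$. One must also check at the start that the constructed $A$ actually satisfies the Lorenz gauge and that the $F_{\mu\nu}$ produced from the $F$-equation coincides with $\partial_\mu A_\nu - \partial_\nu A_\mu$ — this is propagated from the constraint equations~(\ref{10})--(\ref{15}) on the data by a uniqueness argument for the auxiliary linear wave equations, as in~\cite{ST}. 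I expect the main obstacle to be the bilinear estimate controlling the non-null term $\mathrm{Im}(\phi\overline{\partial_\mu\phi})$ feeding into $A$ and then back into the $\phi$-equation through $A^\mu\partial_\mu\phi$: this is the one interaction with no null-form cancellation, its delicate handling forces the precise coupling $r$ between the regularity of $A$ and that of $\phi$, and it is exactly what obstructs reaching the scaling-critical $s=\frac{n}{2}-1$, leaving the $\frac16$ gap. Verifying the new special-case product estimate in $n\ge4$ with the sharp exponents, via the bilinear adjoint-restriction bounds of~\cite{KT} and the $L^q_tL^r_x$ product estimates of~\cite{LV}, is the other technically substantial point.
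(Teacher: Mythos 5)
Your overall strategy coincides with the paper's: rewrite the system in Lorenz gauge as nonlinear wave equations for $(A,\phi)$, exploit the Selberg--Tesfahun null structure of $A^{\mu}\partial_{\mu}\phi$ and of the Maxwell part, run a contraction in Klainerman--Selberg-type spaces, propagate the gauge condition by a linear uniqueness argument from the constraints on the data, and read off the regularity of $F_{\mu\nu}$ from the wave equation it satisfies. You also correctly identify the non-null source $\mathrm{Im}(\phi\overline{\partial_{\mu}\phi})$ in the $A$-equation as the sole obstruction to reaching scaling.

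There are, however, two genuine gaps. First, the entire quantitative content of the theorem --- why precisely $s>\frac{n}{2}-\frac{5}{6}$, $3s-2r>\frac{n-1}{2}$, $2r-s>\frac{n-3}{2}$ suffice --- resides in the multilinear estimates (\ref{29}), (\ref{35}), (\ref{39}), (\ref{40}), (\ref{*}), (\ref{**}); you list their shapes but neither prove them nor reduce them to citable results, so the proposal is a plan rather than a proof. Second, you attach the refined Klainerman--Selberg structure to the wrong unknown: in the paper $\phi$ lives in the plain space $G^s$ (essentially $H^{s,\half+}$), while it is $DA,\partial_tA$ that must be placed in $F^{r-1}$, whose additional component $\|\Lambda_+^{\half+2\epsilon}\Lambda^{-2+3\epsilon}\Lambda_-^{\half}\,\cdot\,\|_{{\mathcal L}^1_t{\mathcal L}^{\infty}_x}$ is exactly what absorbs the output of the non-null term $\partial_{\mu}(\phi\,\partial_{\nu}\overline{\psi})$ (part B of the proof of (\ref{35}), via Lee--Vargas) and is then re-used when $A$ feeds back into the null form $A^{\mu}\partial_{\mu}\phi$ (the ${\mathcal L}^1_t{\mathcal L}^{\infty}_x$ factor in the proof of (\ref{29})). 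With $A$ kept only in $H^{r-1,\half+}$, as you propose, the estimate for $\partial(\phi\overline{\partial\phi})$ at output modulation weight $-\half+$ does not close near $s=\frac{n}{2}-\frac{5}{6}$; this is precisely why the paper abandons pure $X^{s,b}$ iteration. A smaller structural difference: the paper iterates only in $(A,\phi)$ and obtains $F_{\mu\nu}\in G^{s-1}_T$ a posteriori from (\ref{2.10a})--(\ref{2.11a}), rather than carrying $F_{\mu\nu}$ as an unknown in the fixed point, which spares you the compatibility check between the iterated $F$ and $\partial_{\mu}A_{\nu}-\partial_{\nu}A_{\mu}$.
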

\noindent{\bf Remarks:} 
\begin{enumerate}
\item We immediately obtain $\phi \in C^0([0,T],H^s) \cap C^1([0,T],H^{s-1}) $ , and $F_{\mu \nu} \in C^0([0,T],H^{s-1}) \cap C^1([0,T],H^{s-2}) $ .
\item The case $  \frac{n}{2}-\frac{5}{6}+\delta \, , \, r = \frac{n}{2}-1+\delta $ 
 is admissible, where $\delta > 0$ is an arbitrary number.
\end{enumerate}

The paper is organized as follows: in chapter II we prove the null structure of $A^{\mu} \partial_{\mu} \phi$ and in the Maxwell part. In chapter III the local well-posedness result for (\ref{16}),(\ref{17}) is formulated (Theorem \ref{Theorem2}). In chapter IV the main tools for its proof are introduced, namely the Strichartz type estimates, the bilinear estimates in wave-Sobolev spaces (Prop. \ref{Prop.3.6}) and the bilinear estimates for the solutions of the wave equation by Lee-Vargas (Prop. \ref{LV}). Here also the solution spaces are defined and its fundamental properties are formulated, which were proven by Klainerman-Selberg \cite{KS}. In Prop. \ref{Prop1.6} it is shown that it is possible to reduce local well-posedness for the Cauchy problem for a system of wave equations to estimates for the nonlinearities in these spaces. In chapter V we prove Theorem \ref{Theorem2}. In chapter VI we prove our main theorem (Theorem \ref{Theorem1}). We show that the Maxwell-Klein-Gordon system is satisfied and $F_{\mu \nu}$ has the desired regularity properties using the null structure of the Maxwell part and again Strichartz type estimates and the bilinear estimates in wave-Sobolev spaces again.

\section{Null structure}
We can reformulate the system (\ref{1}),(\ref{2}) under the Lorenz condition (\ref{6}) as follows:
$$\square \,A_{\mu} = \partial^{\nu} \partial_{\nu} A_{\mu} = \partial^{\nu}(\partial^{\mu} A_{\nu} - F_{\mu \nu}) = -\partial^{\nu} F_{\mu \nu} = - j_{\mu} \, , $$
thus (using the notation $\partial = (\partial_0,\partial_1,...,\partial_n)$):
\begin{equation}
\label{16}
\square \,A = -Im (\phi \overline{\partial \phi}) - A|\phi|^2 =: N(A,\phi) 
\end{equation}
and
\begin{align*}
m^2 \phi & = D^{(A)}_{\mu} D^{(A)\mu} \phi = \partial_{\mu} \partial^{\mu} \phi -iA_{\mu} \partial^{\mu} \phi -i\partial_{\mu}(A^{\mu} \phi) - A_{\mu}A^{\mu} \phi \\
& = \square \,\phi - 2i A^{\mu} \partial_{\mu} \phi - A_{\mu} A^{\mu} \phi \,
\end{align*}
thus
\begin{equation}
\label{17}
\square \, \phi = 2i A^{\mu} \partial_{\mu} \phi + A_{\mu} A^{\mu} \phi + m^2 \phi =: M(A,\phi) \, .
\end{equation}
Conversely, if $\square\,A_{\mu} = -j_{\mu}$ and $F_{\mu \nu} := \partial_{\mu} A_{\nu} - \partial_{\nu} A_{\mu}$ and the Lorenz condition (\ref{6}) holds then
$$ \partial^{\nu} F_{\mu \nu} = \partial^{\nu}(\partial_{\mu} A_{\nu} - \partial_{\nu} A_{\mu}) = \partial_{\mu} \partial^{\nu} A_{\nu} - \partial^{\nu} \partial_{\nu} A_{\mu} = -\square \,A_{\mu} = j_{\mu} \, $$
thus (\ref{1}),(\ref{2}) is equivalent to (\ref{16}),(\ref{17}), if (\ref{3}),(\ref{4}) and (\ref{6}) are satisfied. \\[0.5em]
\noindent{\bf Null structure of $A^{\mu} \partial_{\mu}\phi$.} 
We define
$$ Q_{\alpha \beta}(\phi,\psi) := \partial_{\alpha} \phi \partial_{\beta} \psi - \partial_{\beta} \phi \partial_{\alpha} \psi  $$
and the Riesz transform $R_k := D^{-1} \partial_k $ and $\mathbf{A} = (A_1,...,A_n)$. We use the decomposition 
$$ \mathbf{A} = A^{df} + A^{cf} \, ,$$
where
$$ A^{df}_j = R^k(R_j A_k - R_k A_j) \quad , \quad A^{cf}_j = - R_j R_k A^k \, , $$
so that
$$ A^{\mu} \partial_{\mu} \phi  = P_1 + P_2 = (-A^0 \partial_t \phi + A^{cf} \cdot \nabla \phi) + A^{df} \cdot \nabla \phi \, .$$
If the Lorenz gauge $\partial_k A^k = \partial_t A^0$ is satisfied, we obtain
\begin{align}
\nonumber
 P_1 &= -A_0 \partial_t \phi - R_j R_k A^k \partial^j \phi \\
\nonumber
& = -A_0 \partial_t \phi - (D^{-2} \nabla\partial_t A_0) \cdot \nabla\phi \\
\nonumber
& = \partial_j(D^{-1} R^j A_0) \partial_t \phi -  \partial_t(D^{-1} R_j A_0)  \partial^j \phi \\
\label{***}
& = -Q_{0j}(D^{-1} R^j A_0,\phi) \, .
\end{align}
Moreover 
\begin{align}
\nonumber
P_2 = A^{df} \cdot \nabla \phi &= R^k(R_j A_k - R_k A_j)\partial^j \phi \\
\nonumber
&= \Lambda^{-2} \partial^k \partial_j A_k \partial^j \phi + A_j \partial^j \phi \\
\nonumber
&= -\half \left( \Lambda^{-2}(\partial_j \partial^j A_k - \partial_j \partial_k A^j)\partial^k \phi - \Lambda^{-2}(\partial^k \partial_j A_k - \partial_k \partial^k A_j) \partial^j \phi \right) \\
\nonumber
&= -\half \left(\partial_j \Lambda^{-1}(R^j  A_k - R_k A^j)\partial^k \phi - \partial^k \Lambda^{-1}(R_j A_k - R_k  A_j) \partial^j \phi \right) \\
\label{****}
&= - \half Q_{jk}(\Lambda^{-1}(R^j A^k - R^k A^j),\phi) \, .
\end{align}
\\[0.5em]
{\bf Null structure in the Maxwell part.}
We start from Maxwell's equations (\ref{1}), i.e. $-\partial^0 F_{l0} + \partial^k F_{lk} = j_l$ and $\partial^k F_{0k} = j_0$ and obtain
\begin{align}
\nonumber
\square F_{k0} & = -\partial_0(\partial_0 F_{k0}) + \partial^l \partial_l F_{k0} \\ 
\nonumber
& = -\partial_0(\partial^l F_{kl} - j_k) + \partial^l \partial_l F_{k0} \\
\nonumber
& = -\partial^l \partial_0(\partial_k A_l - \partial_l A_k) + \partial_0 j_k + \partial^l \partial_l F_{k0} \\
\nonumber
& = -\partial^l[\partial_k(\partial_0 A_l - \partial_l A_0) - \partial_l(\partial_0 A_k - \partial_k A_0)] + \partial_0 j_k + \partial^l \partial_l F_{k0} \\
\nonumber
& = -\partial^l \partial_k F_{0l} + \partial^l \partial_l F_{0k} + \partial_0 j_k + \partial^l \partial_l F_{k0} \\
\nonumber
& =  -\partial^l \partial_k F_{0l} +\partial_0 j_k \\
\nonumber
& = -\partial_k j_0 + \partial_0 j_k \\
\nonumber
&= \, Im(\partial_0 \phi \overline{\partial_k \phi}) + Im(\phi \overline{\partial_0 \partial_k \phi}) + \partial_0(A_k |\phi|^2) \\
 \nonumber
&\quad- Im(\partial_k \phi \overline{\partial_0 \phi}) - Im(\phi \overline{\partial_k \partial_0 \phi}) - \partial_k(A_0 |\phi|^2) \\
\nonumber
&= Im(\partial_t \phi\overline{\partial_k \phi} - \partial_k \phi \overline{\partial_t \phi}) + \partial_t(A_k |\phi|^2) - \partial_k(A_0 |\phi|^2) \\
\label{2.10a}
& = Im \,Q_{0k}(\phi,\overline{\phi}) + \partial_t(A_k |\phi|^2) - \partial_k(A_0 |\phi|^2) 
\end{align}
and
\begin{align}
\nonumber
\square F_{kl} & = -\partial_0 \partial_0 F_{kl} + \partial^m \partial_m F_{kl} \\\nonumber
& = -\partial_0 \partial_0 (\partial_k A_l - \partial_l A_k) + \partial^m \partial_m F_{kl} \\\nonumber
& = -\partial_0 \partial_k(\partial_0 A_l - \partial_l A_0) + \partial_0 \partial_l (\partial_0 A_k - \partial_k A_0) + \partial^m \partial_m F_{kl} \\\nonumber
& = -\partial_0 \partial_k F_{0l} + \partial_0 \partial_l F_{0k} + \partial^m \partial_m F_{kl} \\\nonumber
& = \partial_k \partial_0 F_{l0} - \partial_l \partial_0 F_{k0} + \partial^m \partial_m F_{kl} \\\nonumber
& = \partial_k (\partial^m F_{lm} - j_l) - \partial_l(\partial^m F_{km} - j_k)+ \partial^m \partial_m F_{kl} \\\nonumber
& = \partial_k \partial^m F_{lm} - \partial_l \partial^m F_{km} + \partial^m \partial_m F_{kl} +\partial_l j_k - \partial_k j_l \\\nonumber
& = \partial_k \partial^m(\partial_l A_m - \partial_m A_l) - \partial_l \partial^m(\partial_k A_m - \partial_m A_k)+ \partial^m \partial_m F_{kl} +\partial_l j_k - \partial_k j_l \\\nonumber
& = \partial^m \partial_m (\partial_l A_k - \partial_k A_l) + \partial^m \partial_m F_{kl} +\partial_l j_k - \partial_k j_l \\\nonumber
& = \partial^m \partial_m F_{lk} + \partial^m \partial_m F_{kl} +\partial_l j_k - \partial_k j_l \\
\nonumber
&   = \partial_l j_k - \partial_k j_l \\
\nonumber
&=  Im(\partial_l \phi \overline{\partial_k \phi}) + Im(\phi \overline{\partial_l \partial_k \phi}) + \partial_l(A_k |\phi|^2) \\
 \nonumber
&\quad- Im(\partial_k \phi \overline{\partial_l \phi}) - Im(\phi \overline{\partial_k \partial_l \phi}) - \partial_k(A_l |\phi|^2) \\
\nonumber
&= Im(\partial_l \phi\overline{\partial_k \phi} - \partial_k \phi \overline{\partial_l \phi}) + \partial_l(A_k |\phi|^2) - \partial_k(A_l |\phi|^2) \\
\label{2.11a}
& = Im \,Q_{lk} (\phi,\overline{\phi}) + \partial_l(A_k |\phi|^2) - \partial_k(A_l |\phi|^2)
\end{align}

Fundamental is the following result for the null form $Q(u,v)$ , where $Q(u,v)$ denotes either $Q_{0i}(u,v)$ or $Q_{ij}(u,v)$ , which was given in \cite{KMBT}, Prop. 1.
\begin{align*}
 Q(u,v)  & \lesssim D_+^{\half} D_-^{\half} (D_+^{\half} u D_+^{\half} v) + D_+^{\half}(D_+^{\half} D_-^{\half} u D_+^{\half} v) + D_+^{\half}(D_+^{\half} u D_+^{\half} D_-^{\half} v) 
\end{align*}
Interpolation with the trivial bound $D_+ u D_+ v$ gives (for $0 \le \epsilon \le \frac{1}{4}$) :
\begin{align}
\label{Q1}
Q(u,v) &  \lesssim D_+^{\half-2\epsilon} D_-^{\half-2\epsilon} (D_+^{\half+2\epsilon} u D_+^{\half+2\epsilon} v) \\
\nonumber
&+ D_+^{\half-2\epsilon}(D_+^{\half+2\epsilon} D_-^{\half-2\epsilon} u D_+^{\half+2\epsilon} v) + D_+^{\half-2\epsilon}(D_+^{\half+2\epsilon} u D_+^{\half+2\epsilon} D_-^{\half-2\epsilon} v) \, ,
\end{align}
and by the fractional Leibniz rule for $D_+$  we obtain
\begin{align}
\label{Q2}
Q(u,v)  &  \lesssim D_-^{\half-2\epsilon} (D_+^{\half+2\epsilon} u D_+ v)
 +D_+^{\half+2\epsilon} D_-^{\half-2\epsilon} u D_+ v                          \\ \nonumber
&+ D_+^{\half+2\epsilon} u D_-^{\half-2\epsilon}  D_+ v
+ D_-^{\half-2\epsilon}(D_+ u D_+^{\half+2\epsilon} v) \\ \nonumber
 &+ D_+D_-^{\half-2\epsilon} u D_+^{\half+2\epsilon}v
+ D_+ u D_+^{\half+2\epsilon} D_-^{\half-2\epsilon}v
 \, .
\end{align}

\section{Local well-posedness}
 From (\ref{7}) we have $\phi_0 \in H^s$ , $\phi_1 \in H^{s-1}$ , and from (\ref{11}) we have for $r\le s$: $\nabla a_{0j} \in H^{r-1}$ , $\dot{a}_{0j} \in H^{r-1}$. \\[0.3em]

Our aim is to show the following local well-posedness result:
\begin{theorem}
\label{Theorem2}
Let $n \ge 4$. Assume 
$$ s > \frac{n}{2}-\frac{5}{6} \, , \, r > \frac{n}{2}-1 \, , \, s \ge r-1 \, , \, r \ge s-\half \, , \,2r-s > \frac{n-3}{2} \, , \,3s-2r > \frac{n-1}{2} \,.$$
 Let $\phi_0 \in H^s$, $\phi_1 \in H^{s-1}$ , and $D a_0 \, , \, \dot{a}_0 \in H^{r-1}$  be given. Then the system 
\begin{align}
\label{16'}
\square \,A &= -Im (\phi \overline{\partial \phi}) - A|\phi|^2 =: N(A,\phi) \\
\label{17'}
\square \, \phi &= -2i(A_0 \partial_t \phi + D^{-2}\nabla \partial_t A_0 \cdot \nabla \phi - A^{df} \cdot \nabla \phi) + A_{\mu} A^{\mu} \phi + m^2 \phi =: \tilde{M}(A,\phi) 
\end{align}
with initial conditions
$\phi(0) = \phi_0 \, , \, (\partial_t \phi)(0)=\phi_1\, , \,  A(0)= a_0 \, , \, (\partial_t A)(0) = \dot{a}_0$ has a unique local solution
$$\phi \in H^{s,\frac{1}{2}+}[0,T] \, , \, \partial_t \phi \in H^{s-1,\frac{1}{2}+}[0,T] 
\, , \quad
D A \, , \, \partial_t A \in F^{r-1}_T \, .$$  
Here the space $F^{r-1}_T$ is defined in Definition \ref{Def} below.
\end{theorem}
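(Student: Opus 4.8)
\textit{Proof idea.} The argument is by the Fourier restriction norm method, adapted to the refined solution spaces of Klainerman--Selberg \cite{KS}. I would rewrite the system (\ref{16'}), (\ref{17'}) with the prescribed Cauchy data as a fixed point problem via Duhamel's formula and apply the contraction mapping principle on a ball in $H^{s,\frac12+}[0,T]\times F^{r-1}_T$, with $\partial_t\phi$ and $\partial_t A$ controlled accordingly. By the abstract reduction of Proposition \ref{Prop1.6} it suffices to (i) check that the free evolutions of the data lie in the relevant spaces, which follows from (\ref{7})--(\ref{11}) together with the energy estimates for $\square$ in $H^{s,b}$ and in $F^{r-1}_T$ recorded in Chapter IV, and (ii) prove multilinear estimates of the form
$$\|\tilde M(A,\phi)\|_{H^{s-1,-\frac12+}[0,T]}\lesssim T^{\theta}\,(\cdots),\qquad \|N(A,\phi)\|_{(\text{input space for }F^{r-1}_T)}\lesssim T^{\theta}\,(\cdots),$$
with a positive power $\theta>0$ of $T$ and the right-hand sides multilinear in $\|\phi\|_{H^{s,\frac12+}[0,T]}$, $\|DA,\partial_t A\|_{F^{r-1}_T}$; then for $T$ small the map is a contraction, and uniqueness together with persistence of regularity and continuous dependence follows in the usual way.

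For the Klein--Gordon nonlinearity $\tilde M(A,\phi)=-2i\big(A_0\partial_t\phi+D^{-2}\nabla\partial_t A_0\cdot\nabla\phi-A^{df}\cdot\nabla\phi\big)+A_\mu A^\mu\phi+m^2\phi$, I would first invoke the null structure: by (\ref{***}) and (\ref{****}) the quadratic part is a sum of null forms $Q_{0j}(D^{-1}R^jA_0,\phi)$ and $Q_{jk}(\Lambda^{-1}(R^jA^k-R^kA^j),\phi)$, in which the factor built from $A$ effectively carries the regularity $r$ (one derivative less than $DA\in F^{r-1}_T$). Applying the decomposition (\ref{Q1})/(\ref{Q2}) of $Q$ into terms with a $D_-$ gain transfers part of the modulation weight onto the output (feeding the $-\frac12+$ on the left) and onto one input factor, after which the estimate reduces to product and Strichartz estimates in wave--Sobolev spaces; here the hypotheses $r>\frac n2-1$, $r\ge s-\half$ and $s\ge r-1$ provide exactly the room needed. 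The cubic term $A_\mu A^\mu\phi$ and the mass term $m^2\phi$ are of lower order and are handled by iterated bilinear/Strichartz estimates, again using $r>\frac n2-1$.

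For the Maxwell nonlinearity $N(A,\phi)=-\mathrm{Im}(\phi\overline{\partial\phi})-A|\phi|^2$, the cubic term $A|\phi|^2$ is controlled by the same product estimates. The essential difficulty---and the source of the gap to the scaling threshold---is the quadratic term $\mathrm{Im}(\phi\overline{\partial\phi})$, which carries no null structure, so one cannot gain from a $D_-$ weight and must place the product $\phi\,\overline{\partial\phi}$ directly into the input space of $F^{r-1}_T$, i.e.\ essentially $H^{r-2,-\frac12+}$. This is done with the bilinear wave--Sobolev estimate of Proposition \ref{Prop.3.6}, valid for $n\ge4$ in the special range needed via Klainerman--Tataru \cite{KT} and the Lee--Vargas product estimates of Proposition \ref{LV}. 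Balancing the two factors $\phi\in H^{s,\frac12+}$ and $\partial_t\phi\in H^{s-1,\frac12+}$ against the target regularity $r-2$, with the worst case when one factor sits at low frequency and the other is essentially free, is precisely what forces the conditions $3s-2r>\frac{n-1}{2}$, $2r-s>\frac{n-3}{2}$ and, after optimizing in $r$, the threshold $s>\frac n2-\frac56$.

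I expect the main obstacle to be exactly this last estimate: controlling $\mathrm{Im}(\phi\overline{\partial\phi})$ in the absence of any null form requires the sharp high-dimensional bilinear estimate for products of wave solutions, and extracting from Proposition \ref{LV} and \cite{KT} the precise range of admissible $(s,r)$---together with the bookkeeping that reconciles the single $b=\frac12+$ spaces for $\phi$ with the more delicate space $F^{r-1}_T$ for $A$ while still leaving a positive power of $T$---is the technical heart of the proof. The cubic interactions and the null-form estimates, by contrast, are comparatively routine once (\ref{Q1})/(\ref{Q2}) and the product estimates of Chapter IV are in hand.
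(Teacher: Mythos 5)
Your outline follows the paper's proof essentially verbatim: reduction to nonlinear estimates via Proposition \ref{Prop1.6}, the null-form decomposition (\ref{Q1})/(\ref{Q2}) for the quadratic terms in the $\phi$-equation, Proposition \ref{Prop.3.6} together with the Lee--Vargas estimates of Proposition \ref{LV} for the non-null term $Im(\phi\overline{\partial\phi})$ (which is indeed where the threshold $s>\frac{n}{2}-\frac{5}{6}$ and the condition $3s-2r>\frac{n-1}{2}$ arise), and product estimates for the cubic terms. Note only that the condition $2r-s>\frac{n-3}{2}$ actually enters through the cubic term $A_\mu A^\mu\phi$ (and the Maxwell part in Chapter VI) rather than through $Im(\phi\overline{\partial\phi})$, and that the multilinear estimates themselves---the entire technical content of the paper's Chapter V---are stated but not carried out in your sketch.
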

\noindent{\bf Remark:} We recall that we have shown above that $M(A,\phi) = \tilde{M}(A,\phi)$ in Lorenz gauge. Thus the solution of (\ref{16'}),(\ref{17'}) also solves (\ref{16}),(\ref{17}), if the Lorenz condition holds.

\section{Preliminaries}
We start by recalling the Strichartz type estimates for the wave equation in the next proposition.
\begin{prop}
\label{Str}
If $n \ge 2$ and
\begin{equation}
2 \le q \le \infty, \quad
2 \le r < \infty, \quad
\frac{2}{q} \le (n-1) \left(\half - \frac{1}{r} \right), 
\end{equation}
then the following estimate holds:
$$ \|u\|_{L_t^q L_x^r} \lesssim \|u\|_{H^{\frac{n}{2}-\frac{n}{r}-\frac{1}{q},\half+}} \, ,$$
especially in the case $n \ge 4$ :
$$ \|u\|_{L^2_t L^r_x} \lesssim \|u\|_{H^{\frac{n-1}{2}-\frac{n}{r},\half +}} $$
for $ \frac{2(n-1)}{n-3} \le r < \infty $ .
\end{prop}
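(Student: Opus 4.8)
The plan is to deduce the proposition from the classical Strichartz estimates for the homogeneous wave equation by the standard transfer principle to the wave-Sobolev spaces. Recall first that if $2 \le q \le \infty$, $2 \le r < \infty$ and $\frac2q \le (n-1)(\half - \frac1r)$ --- exactly the wave-admissibility condition in the statement --- then the half-wave propagators satisfy
$$\|e^{\pm it|D|} f\|_{L^q_t L^r_x} \lesssim \|f\|_{\dot H^\gamma}\,, \qquad \gamma := \frac n2 - \frac nr - \frac1q\,,$$
and the same with $\dot H^\gamma$ replaced by $H^\gamma$. This is classical, due to Strichartz, Ginibre--Velo and Lindblad--Sogge in the interior of the admissible range and to Keel--Tao at the endpoint $q = 2$ (which requires $n \ge 4$). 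A short computation using admissibility and $r \ge 2$ gives $\gamma \ge \frac{n+1}{4}\bigl(1 - \frac2r\bigr) \ge 0$, so the homogeneous norm is controlled by the inhomogeneous one and no low-frequency subtlety arises.

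For the transfer to $H^{\gamma,b}$, split $\widehat u$ according to the sign of $\tau$, $u = u_+ + u_-$, and for $u_\pm$ make the change of variables $\tau = \pm|\xi| + \sigma$; on the support of $\widehat{u_\pm}$ this yields $|\sigma| = \bigl|\,|\tau| - |\xi|\,\bigr|$. Setting $\widehat{g^\pm_\sigma}(\xi) := \widehat{u_\pm}(\pm|\xi| + \sigma, \xi)$ we get the representation $u_\pm(t,x) = c \int_\R e^{it\sigma}\bigl(e^{\pm it|D|} g^\pm_\sigma\bigr)(x)\, d\sigma$ of $u_\pm$ as a superposition of free waves. Applying Minkowski's inequality in $L^q_t L^r_x$ (legitimate since $q, r \ge 2$), then the classical estimate above to each $g^\pm_\sigma$, and finally the Cauchy--Schwarz inequality in $\sigma$ against $\langle\sigma\rangle^{-b}$ with $b > \half$ --- it is precisely the integrability $\int_\R \langle\sigma\rangle^{-2b}\, d\sigma < \infty$ that forces the exponent $\half+$ --- we arrive at
$$\|u\|_{L^q_t L^r_x} \lesssim \Bigl(\int_\R \langle\sigma\rangle^{2b}\bigl(\|g^+_\sigma\|_{H^\gamma}^2 + \|g^-_\sigma\|_{H^\gamma}^2\bigr)\, d\sigma\Bigr)^{1/2} \lesssim \|u\|_{H^{\gamma, b}}\,,$$
which is the first claimed inequality.

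For the stated special case one simply takes $q = 2$: then the admissibility condition becomes $1 \le (n-1)(\half - \frac1r)$, i.e.\ $r \ge \frac{2(n-1)}{n-3}$, which is meaningful precisely because $n \ge 4$, and $\gamma = \frac n2 - \frac nr - \half = \frac{n-1}{2} - \frac nr$, giving $\|u\|_{L^2_t L^r_x} \lesssim \|u\|_{H^{\frac{n-1}{2} - \frac nr, \half+}}$.

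The only genuine obstacle is the classical input at the endpoint $q = 2$: the $L^2_t L^r_x$ Strichartz estimate for the wave equation is the delicate Keel--Tao endpoint and fails for $n = 3$ (at $r = \infty$), which is exactly why the hypothesis $n \ge 4$ is indispensable in the second assertion. All the remaining steps --- the sign decomposition, the modulation expansion, Minkowski and Cauchy--Schwarz --- are the routine transfer machinery and produce only harmless constants.
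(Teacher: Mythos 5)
Your argument is correct and is essentially the paper's proof: the paper simply cites the Ginibre--Velo Strichartz estimates combined with the transfer principle, which is exactly the route you take (your version just writes out the transfer step --- sign decomposition, foliation in modulation, Minkowski and Cauchy--Schwarz --- explicitly, and correctly flags the Keel--Tao endpoint needed when $q=2$ and $r=\frac{2(n-1)}{n-3}$). No gaps.
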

\begin{proof}
This is the Strichartz type estimate, which can be found for e.g. in \cite{GV}, Prop. 2.1, combined with the transfer principle.
\end{proof}

\noindent{\bf Remark:} The $H^{s,b}$-space may be replaced by the homogeneous $\dot{H}^{s,b}$-space. \\[0.3em]

An immediate consequence is the following modified Strichartz estimate.
\begin{prop}
\label{mStr}
If $n \ge 4$ , $ 2 \le r \le \frac{2(n-1)}{n-3}$ , one has the estimate
$$ \|u\|_{L^2_t L^r_x} \lesssim \|u\|_{H^{\frac{n+1}{2}\left(\half-\frac{1}{r}\right),\frac{n-1}{2}\left(\half-\frac{1}{r}\right)+}} \lesssim  \|u\|_{H^{\frac{n+1}{2}\left(\half-\frac{1}{r}\right),\half+}} \, . 
$$
\end{prop}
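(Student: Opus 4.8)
The plan is to derive the first inequality by interpolating the endpoint Strichartz estimate of Proposition \ref{Str} against a trivial $L^2_{t,x}$ bound, and to obtain the second inequality from the monotonicity of the $H^{s,b}$-norm in $b$. First I would record the two interpolation endpoints. At $r_1 := \frac{2(n-1)}{n-3}$ one computes $\frac{n-1}{2}-\frac{n}{r_1} = \frac{n+1}{2(n-1)} = \frac{n+1}{2}\big(\half-\frac1{r_1}\big)$, so the ``especially'' part of Proposition \ref{Str} gives
$$ \|u\|_{L^2_t L^{r_1}_x} \lesssim \|u\|_{H^{\frac{n+1}{2}(\frac12-\frac1{r_1}),\,\half+}}\,. $$
At $r_0 := 2$ the two claimed exponents both vanish, and the corresponding estimate is trivial: $\|u\|_{L^2_t L^2_x} = \|u\|_{H^{0,0}} \le \|u\|_{H^{0,0+}}$, since $\langle|\tau|-|\xi|\rangle \ge 1$.

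Next I would interpolate these two bounds. For $2 \le r \le r_1$ define $\theta\in[0,1]$ by $\frac1r = \frac{1-\theta}{2}+\frac{\theta}{r_1}$; an elementary computation using $\frac1{r_1}=\frac{n-3}{2(n-1)}$ gives $\frac12-\frac1r = \frac{\theta}{n-1}$, i.e.\ $\theta = (n-1)\big(\half-\frac1r\big)$, and in particular $\theta\big(\half-\frac1{r_1}\big)=\half-\frac1r$. Complex interpolation of the mixed-norm spaces yields $[L^2_tL^2_x,\,L^2_tL^{r_1}_x]_\theta = L^2_tL^r_x$, while complex interpolation of the weighted $L^2$-spaces (the $H^{s,b}$-spaces are exactly weighted $L^2$ on the Fourier side, with weight $\langle\xi\rangle^s\langle|\tau|-|\xi|\rangle^b$) gives $[H^{s_0,b_0},H^{s_1,b_1}]_\theta = H^{(1-\theta)s_0+\theta s_1,\,(1-\theta)b_0+\theta b_1}$. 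Applying interpolation to the identity map with $(s_0,b_0)=(0,0+)$ and $(s_1,b_1)=\big(\frac{n+1}{2}(\half-\frac1{r_1}),\,\half+\big)$ produces $\|u\|_{L^2_tL^r_x}\lesssim\|u\|_{H^{s_\theta,b_\theta}}$ with
$$ s_\theta = \theta\,\frac{n+1}{2}\Big(\half-\frac1{r_1}\Big) = \frac{n+1}{2}\Big(\half-\frac1r\Big),\qquad b_\theta = \frac{\theta}{2}+ = \frac{n-1}{2}\Big(\half-\frac1r\Big)+\,, $$
which is the first asserted estimate. The only delicate point is the bookkeeping of the ``$+$'': since the regularity loss $\varepsilon$ at each endpoint is arbitrarily small, the interpolated second index is $\frac\theta2$ plus an arbitrarily small quantity, which is precisely the meaning of $\frac{n-1}{2}(\half-\frac1r)+$; this is where I would be slightly careful, but no real difficulty arises.

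Finally, the second inequality is immediate: the hypothesis $r\le\frac{2(n-1)}{n-3}$ forces $\half-\frac1r\le\frac1{n-1}$, hence $\frac{n-1}{2}\big(\half-\frac1r\big)\le\half$, so after fixing the implicit $\varepsilon$'s appropriately one has the continuous inclusion $H^{s,\frac{n-1}{2}(\frac12-\frac1r)+}\hookrightarrow H^{s,\half+}$. Thus the whole argument reduces to two standard complex-interpolation identities (for mixed-norm Lebesgue spaces and for weighted $L^2$-spaces) together with Proposition \ref{Str}, and I expect no obstacle beyond the notational care with the ``$+$''.
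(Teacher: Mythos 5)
Your proposal is correct and follows essentially the same route as the paper: the paper likewise obtains the first inequality by interpolating the trivial identity $\|u\|_{L^2_tL^2_x}=\|u\|_{H^{0,0}}$ with the endpoint Strichartz bound $\|u\|_{L^2_tL^{2(n-1)/(n-3)}_x}\lesssim\|u\|_{H^{\frac{n+1}{2(n-1)},\half+}}$ from Proposition \ref{Str}, and dismisses the second inequality as trivial. The only (inconsequential) slip is that your final embedding arrow is written backwards — monotonicity of the norm in $b$ gives $H^{s,\half+}\hookrightarrow H^{s,\frac{n-1}{2}(\half-\frac1r)+}$, which is the direction actually needed.
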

\begin{proof}
The last estimate is trivial. For the first one we
interpolate the trivial identity $\|u\|_{L^2_t L^2_x} = \|u\|_{H^{0,0}} $ with the estimate
$$ \|u\|_{L^2_t L^{\frac{2(n-1)}{n-3}}_x} \lesssim \|u\|_{H^{\frac{n+1}{2(n-1)},\half+}} \, , $$
which holds by Prop. \ref{Str}. 
\end{proof}
 The following proposition was proven by \cite{KT}.
\begin{prop}
\label{TheoremE}
Let $n \ge 2$, and let $(q,r)$ satisfy:
$$
2 \le q \le \infty, \quad
2 \le r < \infty, \quad
\frac{2}{q} \le (n-1) \left(\half - \frac{1}{r} \right)
$$
Assume that
\begin{gather*}
0 < \sigma < n - \frac{2n}{r} - \frac{4}{q}, \\
s_1, s_2 < \frac{n}{2} - \frac{n}{r} - \frac{1}{q}, \\
s_1 + s_2 + \sigma = n - \frac{2n}{r} - \frac{2}{q}. \end{gather*} 
then
$$
\|D^{-\sigma}(uv)\|_{L_t^{q/2} L_x^{r/2}}
\lesssim
\|u\|_{H^{s_1,\half+}}
\|v\|_{H^{s_2,\half+}} \, .
$$
\end{prop}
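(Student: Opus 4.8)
The statement is a Sobolev-type bilinear estimate for products of free-wave evolutions, and the natural strategy is to reduce it, via the transfer principle, to a purely spacetime estimate for genuine solutions of the homogeneous wave equation and then invoke the bilinear (adjoint restriction) machinery of Lee--Vargas together with the classical Strichartz estimates. First I would use the transfer principle: since both factors live in $H^{s_i,\frac12+}$, it suffices to prove the corresponding estimate in which $u,v$ are replaced by $e^{\pm it|D|}f$, $e^{\pm it|D|}g$ with $f\in H^{s_1}$, $g\in H^{s_2}$, i.e. a fixed-time-Fourier-support version; the $\langle|\tau|-|\xi|\rangle^{1/2+}$ weights are absorbed in the usual way by writing a general $H^{s,1/2+}$ function as a superposition of modulated free waves. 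Thus the core claim becomes
$$\|D^{-\sigma}\bigl(e^{\pm it|D|}f\; e^{\pm it|D|}g\bigr)\|_{L^{q/2}_t L^{r/2}_x}\lesssim \|f\|_{\dot H^{s_1}}\|g\|_{\dot H^{s_2}}$$
under the stated exponent relations (and by Littlewood--Paley/summation one works frequency-localized and sums up, which is where the strict inequalities $s_i<\frac n2-\frac nr-\frac1q$ and $0<\sigma$ are used).

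The next step is a dyadic decomposition: localize $f$ at frequency $\sim\lambda$, $g$ at frequency $\sim\mu$, and split into the high-high case $\lambda\sim\mu$ (output frequency $\lesssim\lambda$) and the high-low case $\mu\lesssim\lambda$ (output $\sim\lambda$). In the high-low regime one does not need any bilinear gain: apply Strichartz (Prop.\ \ref{Str}) to each factor in $L^q_tL^r_x$, use Hölder in space to land in $L^{q/2}_tL^{r/2}_x$, and the $D^{-\sigma}$ only helps; the scaling relation $s_1+s_2+\sigma=n-\frac{2n}{r}-\frac2q$ is exactly the one making the powers of $\lambda$ and $\mu$ match, and the condition $\sigma<n-\frac{2n}{r}-\frac4q$ guarantees that after accounting for the two copies of the Strichartz loss $\frac n2-\frac nr-\frac1q$ there is a strictly positive exponent left over in $\mu$ (respectively $\lambda$) to sum the geometric series. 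In the high-high regime the product of two waves with parallel-ish frequencies concentrates near the light cone, and here I would invoke the Klainerman--Tataru bilinear estimate \cite{KT} (equivalently the bilinear adjoint restriction estimate) to gain the extra $\frac2q$ worth of regularity encoded in the requirement $\sigma>0$ combined with $\sigma<n-\frac{2n}{r}-\frac4q$ — i.e.\ the estimate $\|D^{-\sigma}(u_\lambda v_\lambda)\|_{L^{q/2}_tL^{r/2}_x}\lesssim \lambda^{\,n-\frac{2n}{r}-\frac2q-\sigma}\|f_\lambda\|_{L^2}\|g_\lambda\|_{L^2}$, which is precisely the content of the bilinear theorem in the admissible range $\frac2q\le(n-1)(\frac12-\frac1r)$, and then sum over $\lambda$.

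The main obstacle is the high-high interaction and the sharp tracking of the admissible exponent region: one must be careful that the bilinear Klainerman--Tataru estimate is being applied within its sharp range (this is exactly why the wave-admissibility condition $\frac2q\le(n-1)(\frac12-\frac1r)$ appears in the hypotheses, and why the gain one extracts is no more than $n-\frac{2n}{r}-\frac4q$, forcing $\sigma$ to lie strictly below that), and that the output frequency truncation in the high-high case (output $\lesssim\lambda$, summed dyadically down to the low output scale) does not cost the negative power of $\sigma$ — here $\sigma>0$ is exactly what makes $\sum_{\nu\lesssim\lambda}\nu^{-\sigma}(\cdots)$ converge. The two strict-inequality conditions on $s_1,s_2$ play the dual role of letting one sum the Strichartz pieces in the high-low regime. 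Everything else — the transfer principle, Hölder, and the reduction to free waves — is routine once the frequency geometry is set up correctly.
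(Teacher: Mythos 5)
The paper gives no proof of this proposition at all: it is quoted as a known theorem of Klainerman--Tataru \cite{KT} (in its $H^{s,b}$ form via the transfer principle), so your outline --- transference to free waves, dyadic decomposition, Strichartz plus H\"older in the high-low regime, the frequency-localized bilinear gain of \cite{KT} in the high-high regime, and summation using the strict inequalities on $\sigma$ and $s_1,s_2$ --- is a faithful reconstruction of the standard proof and is consistent with what the paper relies on. The one caveat is that the genuinely hard step, the frequency-localized bilinear estimate for two free waves, is still imported from \cite{KT} as a black box in your argument, exactly as in the paper's citation.
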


Fundamental for the proof of Theorem \ref{Theorem2} are  bilinear estimates in wave-Sobolev spaces which were proven by d'Ancona, Foschi and Selberg in the cases $n=2$ in \cite{AFS2} and $n=3$ in \cite{AFS3} and claimed for arbitrary dimensions in a general form.
We now prove a special case of these bilinear estimate, which holds in higher dimensions.

\begin{prop}
\label{Prop.3.6}
Assume $n \ge 4$ and
$$s_0+s_1+s_2 > \frac{n-1}{2} \, , \, (s_0+s_1+s_2)+s_1+s_2 > \frac{n}{2} \, , \, s_0+s_1 \ge 0 \, , \, s_0+s_2 \ge 0 \, , \, s_1+s_2 \ge 0  . $$
The following estimate holds:
$$ \|uv\|_{H^{-s_0,0}} \lesssim \|u\|_{H^{s_1,\half+}} \|v\|_{H^{s_2,\half+}} \, . $$
\end{prop}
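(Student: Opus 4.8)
The plan is to reduce the bilinear estimate to a dyadic frequency analysis in Fourier space and then invoke the Strichartz-type and Klainerman--Tataru estimates collected above. Write $u,v$ in terms of their space-time Fourier transforms and use duality: it suffices to bound
$$\int \widehat{u}(\tau_1,\xi_1)\,\widehat{v}(\tau_2,\xi_2)\,\widehat{w}(\tau_0,\xi_0)\,d\mu$$
over the hyperplane $\tau_0+\tau_1+\tau_2=0$, $\xi_0+\xi_1+\xi_2=0$, where $w \in H^{s_0,0}$ with $\|w\|_{H^{s_0,0}}\le 1$, i.e. (after moving weights) we must control $\int |\widehat{U}\,\widehat{V}\,\widehat{W}|$ with $U,V$ at regularity $(s_1,\tfrac12+)$, $(s_2,\tfrac12+)$ and $W$ at $(s_0,0)$. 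Decompose each factor dyadically in the spatial frequency $\langle\xi_j\rangle\sim N_j$ and in the modulation $\langle|\tau_j|-|\xi_j|\rangle\sim L_j$. By symmetry one may assume $N_1\ge N_2$; the two main regimes are (i) $N_0\lesssim N_1\sim N_2$ (``high-high'' interaction) and (ii) $N_0\sim N_1\gtrsim N_2$ (``high-low''). In each regime the two output frequencies that are comparable carry the bulk of the regularity, and the relevant Sobolev exponents are exactly the three linear constraints $s_0+s_1\ge 0$, $s_0+s_2\ge 0$, $s_1+s_2\ge 0$, while the two strict inequalities $s_0+s_1+s_2>\frac{n-1}{2}$ and $2(s_1+s_2)+s_0>\frac n2$ will be what is needed to sum the dyadic pieces with a geometric gain.

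The core estimate in each dyadic block is an $L^2_{t,x}$ bound for the product of two free-wave-like pieces, which is precisely what Proposition~\ref{mStr} and Proposition~\ref{TheoremE} supply. Concretely, for the high-high case I would split $W=W_{\text{low}}+W_{\text{high}}$ according to $N_0$ and estimate $\|U_{N_1}V_{N_2}\|_{L^2_{t,x}}$ by a Strichartz product: put $U_{N_1}\in L^2_tL^{r}_x$ and $V_{N_2}\in L^\infty_tL^{r'}_x$ (or both in $L^4_tL^{r}_x$-type spaces via Proposition~\ref{Str} when $n$ is small), Hölder in $x$, and then pair against $W_{N_0}\in L^\infty_tL^2_x$ after using $\langle\xi_0\rangle^{-s_0}$. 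Counting the powers of $N_j$ produced by the Sobolev exponents in Proposition~\ref{Str}/\ref{mStr} against the gains $N_0^{s_0}$, $N_1^{-s_1}$, $N_2^{-s_2}$, one checks the net exponent is negative precisely under the stated hypotheses, so the dyadic sum over $N_0,N_1,N_2$ (with $N_1\sim N_2$) converges. The modulation weights $L_1^{1/2+},L_2^{1/2+}$ are used exactly to run the transfer principle so that each fixed-modulation piece behaves like a free wave; when modulations are large one instead gains directly from $L_1^{-1/2-}$ or $L_2^{-1/2-}$ together with the trivial $L^2$ bound, which is more than enough. For the high-low case the geometry is even simpler: $W$ and the high factor share frequency $\sim N_1$, the low factor is essentially a Strichartz multiplier of size $N_2^{\text{(something)}-s_2}$, and the inequality $s_0+s_2\ge 0$ plus $s_1+s_2\ge 0$ handles the borderline $N_2$-sum while $s_0+s_1+s_2>\frac{n-1}{2}$ handles the high-frequency sum.

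The main obstacle I anticipate is the borderline case where one of the linear constraints $s_0+s_i=0$ or $s_1+s_2=0$ is saturated (these are allowed as equalities), since then the corresponding dyadic sum is only logarithmically divergent and must be absorbed by the $\tfrac12+$ in the modulation exponent or by the strict inequality $2(s_1+s_2)+s_0>\frac n2$; getting a genuine geometric series here rather than a log loss requires care in how one distributes frequencies and modulations between the two Strichartz factors. A secondary subtlety is that the clean Strichartz estimate of Proposition~\ref{Str} only holds for $r<\infty$ and $r\ge\frac{2(n-1)}{n-3}$, so in part of the range (small $n$, or low-frequency output) one must instead use the interpolated range from Proposition~\ref{mStr} or fall back on Proposition~\ref{TheoremE} with a small negative power $D^{-\sigma}$ absorbed into $\langle\xi_0\rangle^{-s_0}$; matching exponents across these different ranges is bookkeeping but needs to be done consistently. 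Once all dyadic pieces are summed, Schur's test (or Cauchy--Schwarz in the dyadic parameters) over the triangle $N_0\lesssim N_1\sim N_2$ resp. $N_0\sim N_1\gtrsim N_2$ completes the argument.
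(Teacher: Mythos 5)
Your overall architecture matches the paper's: Hölder/Strichartz products for the high--low interaction $|\xi_0|\sim|\xi_1|\gtrsim|\xi_2|$ (where the first hypothesis $s_0+s_1+s_2>\frac{n-1}{2}$ and the sign conditions do the work), and Klainerman--Tataru (Prop.~\ref{TheoremE}) for the high--high interaction $|\xi_0|\ll|\xi_1|\sim|\xi_2|$. The main stylistic difference is that the paper never sums dyadic blocks: it obtains the full range in the high--low regime by bilinear interpolation between the two endpoint estimates $\|uv\|_{L^2}\lesssim\|u\|_{H^{0,\half+}}\|v\|_{H^{\frac{n-1}{2}+,\half+}}$ and its mirror image, which automatically covers the borderline equalities $s_0+s_1=0$, etc., that you correctly worry about in a dyadic summation (in fact those borderlines are harmless anyway, since the comparable frequencies allow Cauchy--Schwarz over the dyadic parameter).

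The genuine gap is in the high--high regime when $s_0$ is large and $s_1+s_2$ is small, which the hypotheses permit (e.g.\ $s_1+s_2<\half$ with $s_0$ close to $\frac{n}{2}$; only $(s_0+s_1+s_2)+s_1+s_2>\frac n2$ is assumed). There, Prop.~\ref{TheoremE} alone is unavailable because it forces $\sigma<\frac n2-1$ together with $s_1+s_2+\sigma=\frac{n-1}{2}$, hence $s_1+s_2>\half$; and the crude Bernstein/Sobolev block estimate $\|P_{N_0}(P_{N_1}uP_{N_2}v)\|_{L^2}\lesssim N_0^{n/2}\|P_{N_1}u\|_{H^{0,\half+}}\|P_{N_2}v\|_{H^{0,\half+}}$ would require $s_0+s_1+s_2>\frac n2$, which is strictly stronger than what is assumed. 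Your suggested fix --- using Prop.~\ref{TheoremE} ``with a small negative power $D^{-\sigma}$ absorbed into $\langle\xi_0\rangle^{-s_0}$'' --- goes the wrong way: small $\sigma$ forces $s_1+s_2$ close to $\frac{n-1}{2}$, not small. The missing idea is an interpolation between the Sobolev endpoint $\|uv\|_{H^{-\frac n2-,0}}\lesssim\|u\|_{H^{0,\half+}}\|v\|_{H^{0,\half+}}$ and the \emph{extreme} Klainerman--Tataru case $\|uv\|_{H^{1-\frac n2+,0}}\lesssim\|u\|_{H^{\frac14+,\half+}}\|v\|_{H^{\frac14+,\half+}}$ (i.e.\ $\sigma$ at its maximal value $\frac n2-1-$): with interpolation parameter $\theta=\frac n2-s_0$ this costs exactly $\frac\theta4$ derivatives on each factor, and the requirement $\frac\theta4<\frac{s_1+s_2}{2}$ is precisely the second hypothesis. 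Without this step (or a dyadic equivalent, taking geometric means of the two block estimates with the KT exponent pushed to its endpoint), the proof does not close under the stated assumptions; the rest of your outline is sound but this part is not ``bookkeeping.''
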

\begin{proof}
We have to prove
\begin{align*}
I & := \int_* \frac{\widehat{u}_1(\xi_1,\tau_1)}{\angles{\xi_1}^{s_1} \angles{|\xi_1|-|\tau_1|}^{\half+}}  \frac{\widehat{u}_2(\xi_2,\tau_2)}{\angles{\xi_2}^{s_2} \angles{|\xi_2|-|\tau_2|}^{\half+}}  \frac{\widehat{u}_0(\xi_0,\tau_0)}{\angles{\xi_0}^{s_0}} \lesssim \|u_1\|_{L^2_{xt}}
\|u_2\|_{L^2_{xt}} \, .
\end{align*}
Here * denotes integration over $\xi_0+\xi_1+\xi_2=0 $ and $\tau_0+\tau_1+\tau_2=0$ . Remark, that we may assume that the Fourier transforms are nonnegative. We consider different regions. \\
1. If $|\xi_0| \sim |\xi_1| \gtrsim |\xi_2|$ and $s_2 \ge 0$, we obtain
\begin{align*}
I \sim \int_* \frac{\widehat{u}_1(\xi_1,\tau_1)}{\angles{\xi_1}^{s_1+s_0} \angles{|\xi_1|-|\tau_1|}^{\half+}}  \frac{\widehat{u}_2(\xi_2,\tau_2)}{\angles{\xi_2}^{s_2} \angles{|\xi_2|-|\tau_2|}^{\half+}}  \widehat{u}_0(\xi_0,\tau_0) \, .
\end{align*}
Thus we have to show
$$ \|uv\|_{L^2_{xt}} \lesssim \|u\|_{H^{s_1+s_0,\half+}} \|v\|_{H^{s_1,\half+}} \, . $$
By Prop. \ref{Str} we obtain
$$ \|uv\|_{L^2_{xt}} \lesssim \|u\|_{L^{\infty}_t L^2_x} \|v\|_{L^2_t L^{\infty}_x} \lesssim \|u\|_{H^{0,\half+}} \|v\|_{H^{\frac{n-1}{2}+,\half+}} $$
and also
$$ \|uv\|_{L^2_{xt}} \lesssim  \|u\|_{H^{\frac{n-1}{2}+,\half+}}      \|v\|_{H^{0,\half+}} \, .$$
Bilinear interpolation gives for $0 \le \theta \le 1 $ :
$$ \|uv\|_{L^2_{xt}} \lesssim \|u\|_{H^{\frac{n-1}{2}(1-\theta)+,\half+}} \|v\|_{H^{\frac{n-1}{2}\theta+,\half+}}  \, , $$
so that 
$$   \|uv\|_{L^2_{xt}} \lesssim \|u\|_{H^{s_1+s_0,\half+}} \|v\|_{H^{s_2,\half+}} \, , $$
if $ s_0+s_1+s_2 > \frac{n-1}{2} $ and $s_1+s_0 \ge 0$ .\\
2. If $|\xi_0| \sim |\xi_2| \gtrsim |\xi_1|$ and $s_1 \ge 0$ , we obtain similarly
$$  \|uv\|_{L^2_{xt}} \lesssim \|u\|_{H^{s_2+s_0,\half+}} \|v\|_{H^{s_1,\half+}} \, , $$
if $ s_0+s_1+s_2 > \frac{n-1}{2} $ and $s_2+s_0 \ge 0$ .\\
3. If $|\xi_1| \ge |\xi_2|$ , $s_0 \le 0$ and $s_2 \ge 0$, we have $|\xi_0| \lesssim |\xi_1| $ , so that
$ \angles{\xi_0}^{-s_0} \lesssim \angles{\xi_1}^{-s_0}$ and we obviously obtain the same result as in 1. \\
4. If $|\xi_1| \le |\xi_2|$ , $s_0 \le 0$ and $s_1 \ge 0$ , we obtain the same result as in 2. \\
5.  If $|\xi_0| \sim |\xi_1| \gtrsim |\xi_2|$ and $s_2 \le 0$ we obtain
$$ I \lesssim \int_* \frac{\widehat{u}_1(\xi_1,\tau_1)}{\angles{\xi_1}^{s_0+s_1+s_2} \angles{|\xi_1|-|\tau_1|}^{\half+}}  \frac{\widehat{u}_2(\xi_2,\tau_2)}{\angles{|\xi_2|-|\tau_2|}^{\half+}}  \widehat{u}_0(\xi_0,\tau_0)  \lesssim \|u_1\|_{L^2_{xt}} \|u_2\|_{L^2_{xt}} \, , $$
because under our asumption $s_0+s_1+s_2 > \frac{n-1}{2}$ we obtain by Prop. \ref{Str}:
$$ \|uv\|_{L^2_{xt}} \le \|u\|_{L^2_t L^{\infty}_x} \|v\|_{L^{\infty}_x L^2_t} \lesssim \| u \|_{H^{s_0+s_1+s_2,\half+}} \|v\|_{H^{0,\half+}} \, . $$
6. If $|\xi_1| \ge |\xi_2|$ , $s_2 \le 0$ and $s_0 \le 0$ , or \\
7. If $|\xi_0| \sim |\xi_2| \ge |\xi_1|$ and $s_1 \le 0$ , or \\
8. If $|\xi_1| \le |\xi_2|$ , $s_1 \le 0$ and $s_0 \le 0$ , the same argument applies.

Thus we are done, if $s_0 \le 0$ , and also, if $s_0 \ge 0$ , and $|\xi_0| \sim |\xi_2 | \ge |\xi_1|$ or $ |\xi_0| \sim |\xi_1 | \ge |\xi_2|$ . 

It remains to consider the following case: $|\xi_0| \ll |\xi_1| \sim |\xi_2|$ and $s_0 > 0$ . We apply Prop. \ref{TheoremE} which gives
$$ \|uv\|_{H^{-s_0,0}} \lesssim \|u\|_{H^{s_1,\half+}} \|v\|_{H^{s_2,\half+}} \, , $$
under the conditions $0 < s_0 < \frac{n}{2} - 1 $ , $s_0+s_1+s_2 = \frac{n-1}{2} $ and $s_1,s_2 < \frac{n-1}{4}$ . The last condition is not necessary in our case $|\xi_1| \sim |\xi_2|$ . Remark that this implies $s_1+s_2 > \half$ , so that $s_0+s_1+s_2+s_1+s_2 > \frac{n}{2}$ .  , The second condition can now be replaced by $s_0+s_1+s_2 \ge \frac{n-1}{2}$ , because we consider inhomogeneous spaces.

Finally we consider the case $|\xi_0| \ll |\xi_1| \sim |\xi_2|$ and $s_0 \ge \frac{n}{2}-1 $ .
If $s_0 > \frac{n}{2} $ and $s_1+s_2 \ge 0$ we obtain the claimed estimate by Sobolev
$$ \|uv\|_{H^{-s_0,0}} \lesssim \|u\|_{H^{0,\half+}} \|v\|_{H^{0,\half+}} \le  \|u\|_{H^{s_1,\half+}} \|v\|_{H^{s_2,\half+}} \, . $$
We now interpolate the special case
$$ \|uv\|_{H^{-\frac{n}{2}-,0}} \lesssim \|u\|_{H^{0,\half+}} \|v\|_{H^{0,\half+}}  $$
with the following estimate
$$ \|uv\|_{H^{1-\frac{n}{2}+,0}} \lesssim \|u\|_{H^{\frac{1}{4}+,\half+}} \|v\|_{H^{\frac{1}{4}+,\half+}} \, , $$
which follows from Prop. \ref{TheoremE} . We obtain
$$\|uv\|_{H^{-s_0-,0}} \lesssim \|u\|_{H^{k+,\half+}} \|v\|_{H^{k+,\half+}} \, , $$
where $s_0 = (1-\theta)\frac{n}{2} - \theta(1-\frac{n}{2}) = \frac{n}{2} - \theta \, \Leftrightarrow \, \theta = \frac{n}{2}-s_0$ , $0 \le \theta \le 1 $ , $k=\frac{\theta}{4} = \frac{n}{8}-\frac{s_0}{4}.$  Using our asumption  $(s_0+s_1+s_2)+s_1+s_2 > \frac{n}{2} \, \Leftrightarrow \, \frac{n}{2}-s_0 < 2(s_1+s_2),$  we obtain $0 \le k < \frac{s_1+s_2}{2}$ .  Because $|\xi_1| \sim |\xi_2|$ , we obtain
$$ \|uv\|_{H^{-s_0,0}} \lesssim \|u\|_{H^{s_1,\half+}} \|v\|_{H^{s_2,\half+}} $$
for $(s_0+s_1+s_2)+s_1+s_2 > \frac{n}{2}$ and $s_1+s_2 \ge 0$ .
\end{proof}

\begin{Cor}
\label{Cor.3.1}
Under the assumptions of Prop. \ref{Prop.3.6}
$$ \|uv\|_{H^{-s_0,0}} \lesssim \|u\|_{H^{s_1,\half-}} \|v\|_{H^{s_2,\half-}}  \, . $$
\end{Cor}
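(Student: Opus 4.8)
The plan is to deduce Corollary \ref{Cor.3.1} from Proposition \ref{Prop.3.6} by a soft argument that trades the exponent $\half+$ in the hyperbolic weights of the right-hand side norms for $\half-$, at no cost on the left-hand side. The key observation is that the only use of the weights $\angles{|\tau_i|-|\xi_i|}^{\half+}$ ($i=1,2$) in the proof of Proposition \ref{Prop.3.6} is through the Strichartz and product estimates of Proposition \ref{Str}, Proposition \ref{mStr} and Proposition \ref{TheoremE}, all of which are stated with the exponent $\half+$; so I need a mechanism to recover a tiny bit of hyperbolic regularity. The standard device is the elementary inequality for multipliers: whenever the two input weights have exponent $\half+$, one of the three frequencies $\xi_0,\xi_1,\xi_2$ or, more precisely, one of the three hyperbolic weights $\angles{|\tau_j|-|\xi_j|}$ must be comparable to the largest of them, and since $\tau_0+\tau_1+\tau_2=0$, $\xi_0+\xi_1+\xi_2=0$ the three quantities $|\tau_j|-|\xi_j|$ cannot all be small — there is the resonance identity controlling the product. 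However, here the cleanest route avoids the resonance function entirely.

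First I would reduce, by duality and nonnegativity of Fourier transforms exactly as at the start of the proof of Proposition \ref{Prop.3.6}, to showing
$$ \int_* \frac{\widehat{u}_1 \,\widehat{u}_2\, \widehat{u}_0}{\angles{\xi_0}^{s_0}\angles{\xi_1}^{s_1}\angles{\xi_2}^{s_2}\, \angles{|\tau_1|-|\xi_1|}^{\half-}\angles{|\tau_2|-|\xi_2|}^{\half-}} \lesssim \|u_1\|_{L^2_{xt}}\|u_2\|_{L^2_{xt}}\,. $$
Now I would split the region of integration according to the sizes of the two hyperbolic weights $L_i := \angles{|\tau_i|-|\xi_i|}$ relative to the spatial frequencies. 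In the subregion where both $L_1 \lesssim \angles{\xi_{\max}}^{\epsilon'}$ and $L_2 \lesssim \angles{\xi_{\max}}^{\epsilon'}$ for a small $\epsilon' > 0$ with $\epsilon'$ chosen so that $\epsilon' \cdot(\text{gap in the inequalities})$ is absorbed, one has $L_i^{\half-} \gtrsim L_i^{\half+}\,L_i^{-2\epsilon} \gtrsim L_i^{\half+}\angles{\xi_{\max}}^{-2\epsilon\epsilon'}$, i.e. replacing $\half-$ by $\half+$ costs only a fixed negative power of $\angles{\xi_{\max}}$, which is harmless because all the inequalities in Proposition \ref{Prop.3.6} are strict (the hypotheses $s_0+s_1+s_2 > \frac{n-1}{2}$, $(s_0+s_1+s_2)+s_1+s_2 > \frac{n}{2}$ leave room), so one may lower $s_0,s_1,s_2$ slightly and still apply Proposition \ref{Prop.3.6}. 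In the complementary subregion, say $L_1 \gtrsim \angles{\xi_{\max}}^{\epsilon'}$ (the case $L_2$ large is symmetric), I would absorb the deficit: $L_1^{\half-} = L_1^{\half+}\, L_1^{-2\epsilon} \gtrsim L_1^{\half+}\angles{\xi_{\max}}^{-2\epsilon\epsilon'}$ again, so actually the same bound works directly. In other words the entire statement follows because $\angles{|\tau_i|-|\xi_i|}^{\half-} \gtrsim \angles{|\tau_i|-|\xi_i|}^{\half+}\,\angles{\xi_i}^{-\delta'} $ is false pointwise but becomes true after localizing, and the uniform small loss $\angles{\xi_{\max}}^{-\delta}$ is absorbed into the strict inequalities.

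Concretely, the slick version: pick $\delta>0$ so small that the hypotheses of Proposition \ref{Prop.3.6} still hold with $s_0$ replaced by $s_0+\delta$ and $s_1,s_2$ replaced by $s_1-\delta, s_2-\delta$ (possible since all three defining inequalities are strict and the non-strict ones $s_0+s_j\ge 0$, $s_1+s_2\ge 0$ are stable under further shrinking if one instead shifts only where there is room; a brief case check handles any equality cases, or one simply notes the corollary is only used with strict inequalities). Then write $\angles{|\tau_i|-|\xi_i|}^{\half-} = \angles{|\tau_i|-|\xi_i|}^{\half+}\angles{|\tau_i|-|\xi_i|}^{-2\epsilon}$ and bound $\angles{|\tau_i|-|\xi_i|}^{-2\epsilon} \le 1$; this is too crude. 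The correct trade: on the support, $\angles{|\tau_i|-|\xi_i|} \lesssim \angles{\tau_i}+\angles{\xi_i} \lesssim \angles{\xi_{\max}}\angles{|\tau_i|-|\xi_i|}$ is not what we want either.

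The honest main obstacle is precisely this: there is no pointwise inequality converting $\half-$ into $\half+$, so the reduction genuinely requires either (a) re-running the proof of Proposition \ref{Prop.3.6} with $\half-$ in place of $\half+$ and checking each Strichartz/bilinear input still closes — which it does, because Proposition \ref{Str} etc. are themselves proved for $\half+$ but the extra $\epsilon$ is a harmless improvement one can also get at level $\half-$ at the cost of worsening the Sobolev exponent by $0+$, absorbed into the strict inequalities — or (b) an abstract interpolation/duality argument. I expect the author's proof simply states that the argument of Proposition \ref{Prop.3.6} applies verbatim with $\half+$ replaced by $\half-$ throughout, since each application of Proposition \ref{Str}, Proposition \ref{mStr}, Proposition \ref{TheoremE} can tolerate $\half-$ provided one weakens the spatial exponents by an arbitrarily small amount, and all spatial inequalities in the hypotheses are strict. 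So the step I would expect to be delicate, and would double-check, is the non-strict conditions $s_0+s_1\ge 0$, $s_0+s_2\ge 0$, $s_1+s_2\ge 0$: here there is no room to shrink, but these are used only to pass sums of exponents onto a single factor (e.g. $\angles{\xi_0}^{-s_0}\lesssim\angles{\xi_1}^{-s_0}$ when $s_0\le 0$ and $|\xi_0|\lesssim|\xi_1|$), a step that is completely insensitive to the value of the hyperbolic exponent. Hence the corollary follows.
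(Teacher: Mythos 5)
Your proposal does not close. The localization in the modulation variables $L_i=\angles{|\tau_i|-|\xi_i|}$ fails for the reason you yourself half-notice: in the region $L_1\gtrsim\angles{\xi_{\max}}^{\epsilon'}$ the inequality you write, $L_1^{\half-}\gtrsim L_1^{\half+}\angles{\xi_{\max}}^{-2\epsilon\epsilon'}$, points the wrong way; there $L_1^{-(\half-)}=L_1^{2\epsilon}\,L_1^{-(\half+)}$, and $L_1^{2\epsilon}$ is an unbounded loss not controlled by any power of $\angles{\xi_{\max}}$, since $L_1$ can be as large as $\angles{\tau_1}$. The fallback you then commit to --- re-running the proof of Prop.~\ref{Prop.3.6} ``verbatim'' with $\half+$ replaced by $\half-$, paying $0+$ spatial derivatives in each Strichartz input --- also has a concrete obstruction: cases 1, 2 and 5 of that proof place one factor in $L^\infty_t L^2_x$ via $\|u\|_{L^\infty_t L^2_x}\lesssim\|u\|_{H^{0,\half+}}$, and the embedding $H^{s,b}\hookrightarrow L^\infty_t L^2_x$ fails for every $b<\half$ no matter how large $s$ is (the loss is purely in the $\tau$-integration, as one sees from $\widehat u$ supported in $|\xi|\le 1$, $0\le\tau-|\xi|\le M$, with $M\to\infty$). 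This can be repaired by replacing the H\"older pairing $L^\infty_t\times L^2_t$ by $L^q_t\times L^{\tilde q}_t$ with $q$ large but finite, but that is a genuine modification you would have to carry out, not a verbatim substitution.

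The paper's actual proof is the route you mention only in passing as option (b): a single bilinear interpolation of the estimate of Prop.~\ref{Prop.3.6} (applied at slightly perturbed exponents, which the strict hypotheses permit) with the crude product estimate $\|uv\|_{H^{N,0}}\lesssim\|u\|_{H^{N,\frac14+}}\|v\|_{H^{N,\frac14+}}$ for $N>\frac n2$, which follows from the algebra property of $H^N_x$ together with Sobolev embedding in $t$ (only $b>\frac14$ is needed to reach $L^4_t$). Interpolating with a small weight on the second estimate pushes the hyperbolic exponent strictly below $\half$ while restoring the spatial exponents to $s_1,s_2$. Your instinct that the non-strict constraints $s_0+s_1\ge 0$, $s_0+s_2\ge 0$, $s_1+s_2\ge 0$ are the delicate point is correct, but your resolution of it belongs to the ``re-run the proof'' route; the paper instead saves the borderline case $s_1=-s_2$ by interpolating with the asymmetric estimate $\|uv\|_{H^{-N,0}}\lesssim\|u\|_{H^{N,\frac14+}}\|v\|_{H^{-N,\frac14+}}$, which keeps $s_1+s_2=0$ along the interpolation line. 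As written, the proposal has a gap; carrying out the interpolation argument is the shortest way to fill it.
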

\begin{proof}
This follows by bilinear interpolation of the estimate of Prop. \ref{Prop.3.6} with the estimate
$$\|uv\|_{H^{N,0}} \lesssim \|u\|_{H^{N,\frac{1}{4}+}} \|v\|_{H^{N,\frac{1}{4}+}}  \, , $$
where, say, $N > \frac{n}{2}$ , which follows by Sobolev apart from the special case $s_1=-s_2,$  in which we interpolate with the estimate
$$\|uv\|_{H^{-N,0}} \lesssim \|u\|_{H^{N,\frac{1}{4}+}} \|v\|_{H^{-N,\frac{1}{4}+}}   $$
in order to save the condition $s_1=-s_2$ .
\end{proof}

The following multiplication law is well-known:
\begin{prop} {\bf (Sobolev multiplication law)}
\label{SML}
Let $n\ge 2$ , $s_0,s_1,s_2 \in \mathbb{R}$ . Assume
$s_0+s_1+s_2 > \frac{n}{2}$ , $s_0+s_1 \ge 0$ ,  $s_0+s_2 \ge 0$ , $s_1+s_2 \ge 0$. Then the following product estimate holds:
$$ \|uv\|_{H^{-s_0}} \lesssim \|u\|_{H^{s_1}} \|v\|_{H^{s_2}} \, .$$
\end{prop}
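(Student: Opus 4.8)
The plan is to obtain this classical Sobolev multiplication law by first reducing to the case of nonnegative exponents and then combining Sobolev embedding, Hölder's inequality, and duality.

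\textbf{Step 1: reduction to $s_0,s_1,s_2\ge 0$.} At most one of the three exponents can be negative, since two negative ones would contradict one of $s_0+s_1\ge0$, $s_0+s_2\ge0$, $s_1+s_2\ge0$. Suppose for instance $s_0<0$; then $s_0+s_1\ge0$ and $s_0+s_2\ge0$ force $s_1>0$ and $s_2>0$. In the convolution defining $\widehat{uv}(\xi_0)$ one has $\xi_0=\xi_1+\xi_2$, hence $\langle\xi_0\rangle\le\langle\xi_1\rangle+\langle\xi_2\rangle$, and therefore $\langle\xi_0\rangle^{-s_0}\lesssim\langle\xi_1\rangle^{-s_0}+\langle\xi_2\rangle^{-s_0}$ since $-s_0>0$. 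This splits the estimate for $\|uv\|_{H^{-s_0}}$ into the two estimates
$$\|(\langle D\rangle^{-s_0}u)\,v\|_{L^2}\lesssim\|u\|_{H^{s_1}}\|v\|_{H^{s_2}},\qquad \|u\,(\langle D\rangle^{-s_0}v)\|_{L^2}\lesssim\|u\|_{H^{s_1}}\|v\|_{H^{s_2}},$$
and since $\|\langle D\rangle^{-s_0}u\|_{H^{s_0+s_1}}=\|u\|_{H^{s_1}}$ with $s_0+s_1\ge0$, these are instances of the proposition with the nonnegative triples $(0,s_0+s_1,s_2)$ and $(0,s_1,s_0+s_2)$, which still satisfy all the hypotheses. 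The cases $s_1<0$ or $s_2<0$ are treated symmetrically, using $\langle\xi_1\rangle\le\langle\xi_0\rangle+\langle\xi_2\rangle$, etc.

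\textbf{Step 2: the case $s_0,s_1,s_2\ge0$.} By duality it suffices to prove $\|uv\|_{L^r}\lesssim\|u\|_{H^{s_1}}\|v\|_{H^{s_2}}$ for some $r$ with $L^r\hookrightarrow H^{-s_0}$. When $s_1,s_2<\frac n2$ one uses the Sobolev embeddings $H^{s_i}\hookrightarrow L^{p_i}$ with $\frac1{p_i}=\half-\frac{s_i}{n}$ and Hölder's inequality to get $uv\in L^r$ with $\frac1r=1-\frac{s_1+s_2}{n}$; the dual embedding $L^r\hookrightarrow H^{-s_0}$ then holds precisely when $\frac1r\le\half+\frac{s_0}{n}$, i.e. when $s_0+s_1+s_2\ge\frac n2$, which is assumed. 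If $s_1$ or $s_2$ is $\ge\frac n2$, one instead places that factor in $L^q$ for a large finite $q$ (or in $L^\infty$ if the inequality is strict) and adjusts the remaining exponents accordingly.

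\textbf{Main obstacle.} The only real work is the endpoint bookkeeping in Step 2 — situations where a Sobolev exponent reaches $\infty$, or where some $s_i=0$ pins $p_i=2$ — but the strict inequality $s_0+s_1+s_2>\frac n2$ always supplies the needed slack, and the pairwise conditions $s_i+s_j\ge0$ are exactly what makes the reduction in Step 1 land in the nonnegative regime. Beyond this standard case analysis I expect no difficulty; the hypothesis $n\ge2$ enters only through the Sobolev embeddings.
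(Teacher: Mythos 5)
The paper offers no proof of Proposition \ref{SML} at all --- it is simply quoted as ``well-known'' --- so there is nothing to compare your argument against; what matters is whether your sketch is sound. Your Step 1 is fine: the reduction to nonnegative exponents via $\langle\xi_0\rangle^{-s_0}\lesssim\langle\xi_1\rangle^{-s_0}+\langle\xi_2\rangle^{-s_0}$ (working with $|\widehat u|,|\widehat v|$, which is legitimate since all norms involved depend only on the moduli of the Fourier transforms) is the standard device, and the new triples you produce do satisfy all the hypotheses.

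Step 2, however, contains a genuine error in the key claim that ``the dual embedding $L^r\hookrightarrow H^{-s_0}$ holds precisely when $\frac1r\le\half+\frac{s_0}{n}$.'' That embedding also requires $r\le 2$: for $r>2$ there is no inclusion $L^r(\R^n)\subset H^{-s_0}(\R^n)$ for any $s_0$ (take widely separated unit bumps with coefficients in $\ell^r\setminus\ell^2$). With your choice of sharp Sobolev exponents one has $\frac1r=1-\frac{s_1+s_2}{n}$, so $r>2$ exactly when $s_1+s_2>\frac n2$; e.g.\ for $n=4$, $s_0=0$, $s_1=s_2=\frac32$ your recipe puts $uv$ in $L^4(\R^4)$ and then asserts $L^4\hookrightarrow L^2$, which is false. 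The estimate itself is of course true there, and the repair is routine: when $s_1+s_2\ge\frac n2$ do not saturate the Sobolev embeddings, but choose $2\le q_1,q_2\le\infty$ with $\frac1{q_1}+\frac1{q_2}=\half$ and $\frac1{q_i}\ge\half-\frac{s_i}{n}$ (possible precisely because $s_1+s_2\ge\frac n2$ and $s_1,s_2\ge0$), land the product in $L^2$, and use $L^2\hookrightarrow H^{-s_0}$ from $s_0\ge0$; the dual embedding $L^r\hookrightarrow H^{-s_0}$ should be reserved for the regime $s_1+s_2<\frac n2$, where indeed $1\le r<2$ and the condition $s_0\ge n(\frac1r-\half)=\frac n2-s_1-s_2$ is what you need (with the $L^1$/$s_0>\frac n2$ endpoint handled as you indicate). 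The same caveat applies to your treatment of $s_i\ge\frac n2$: ``a large finite $q$'' must be paired with an exponent for the other factor so that the product exponent does not exceed $2$. With that correction the argument closes.
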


Next we formulate a special case of the fundamental estimates for the $L^q_t L^p_x$-norm of the product of solutions of the wave equation due to Lee-Vargas \cite{LV}.
\begin{prop}
\label{LV}
Let $n \ge 4$ . \\
1. Assume
$$ \Box \,f = \Box \,g = 0 $$
in $\R^n \times \R$ .
The estimate
$$ \|D^{\beta_0} (fg)\|_{L^q_t L^2_x} \lesssim (\|f(0)\|_{\dot{H}^{\alpha_1}} + \|(\partial_t f)(0)\|_{\dot{H}^{\alpha_1-1}})(\|g(0)\|_{\dot{H}^{\alpha_2}} + \|(\partial_t g)(0)\|_{\dot{H}^{\alpha_2-1}}) $$
holds, provided $1< q \le 2$ and
\begin{align}
\label{LV2}
\frac{1}{q} &=  \frac{n}{2}-\alpha_1 - \alpha_2 +\beta_0  \, , \\
\label{LV5}
\beta_0 & > \frac{2}{q} - \frac{n+1}{2} \\
\label{LV12}
\alpha_1,\alpha_2 & <  \frac{n}{2} + \half - \frac{2}{q} \, .
\end{align}
2. If (\ref{LV2},(\ref{LV5}),(\ref{LV12}) and $\alpha_1,\alpha_2 \ge 0$ are satisfied, the following estimate holds
$$ \|uv\|_{L^q_t L^2_x} \lesssim \|u\|_{\dot{H}^{\alpha_1,\half+}} \|v\|_{\dot{H}^{\alpha_2,\half+}} \, . $$
\end{prop}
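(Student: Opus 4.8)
The plan is to derive part 1 from the Lee–Vargas bilinear restriction-type estimates for the cone, and then to obtain part 2 from part 1 by the standard transfer (or "Duhamel/Minkowski") principle that converts free-wave estimates into estimates on wave-Sobolev spaces $\dot H^{s,\frac12+}$. So the proof breaks into two essentially independent pieces.

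For part 1, I would start from the known $L^q_tL^2_x$ product estimates of Lee–Vargas \cite{LV} for two free waves, stated there (after rescaling) in the frequency-localized form: if $f$ has spatial frequencies $\sim\lambda$ and $g$ has spatial frequencies $\sim\mu$ with, say, $\lambda\ge\mu$, then $\|fg\|_{L^q_tL^2_x}\lesssim \lambda^{a}\mu^{b}\|f(0)\|_{L^2}\|g(0)\|_{L^2}$ for exponents $a,b$ dictated by the admissibility of $q$. The three displayed conditions (\ref{LV2}),(\ref{LV5}),(\ref{LV12}) are precisely the scaling identity, the positivity of the gain $\beta_0$ relative to the threshold forced by the $L^2$ bilinear cone estimate of Klainerman–Tataru \cite{KT} (which is the endpoint case), and the condition that keeps each individual exponent below the local-smoothing/Sobolev threshold so that the dyadic sums in $\lambda,\mu$ converge. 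I would verify that under (\ref{LV2})–(\ref{LV12}) one may sum over the dyadic pieces — the low-frequency factor is summed using $\alpha_2<\frac n2+\frac12-\frac2q$, the off-diagonal $\lambda\gg\mu$ region is handled by a geometric series in $\mu/\lambda$, and the diagonal $\lambda\sim\mu$ region is covered by the $\beta_0>\frac2q-\frac{n+1}2$ slack — and thereby recover the homogeneous Sobolev estimate with multipliers $\dot H^{\alpha_1},\dot H^{\alpha_2}$ and the external derivative $D^{\beta_0}$ on the left. One should be slightly careful that $\Box f=0$ means $f=\cos(tD)f(0)+\frac{\sin(tD)}{D}(\partial_tf)(0)$, which explains the appearance of $\|(\partial_tf)(0)\|_{\dot H^{\alpha_1-1}}$ alongside $\|f(0)\|_{\dot H^{\alpha_1}}$; each of the two half-wave pieces $e^{\pm itD}$ obeys the same bound, so the sum does too.

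For part 2, the additional hypothesis $\alpha_1,\alpha_2\ge0$ is used so that $D^{\beta_0}$ together with $D^{-\alpha_1},D^{-\alpha_2}$ can be distributed without the homogeneous multipliers producing spurious low-frequency divergences when passing from free waves to general space-time functions. The transfer principle says: if a bilinear estimate of the form $\|\,B(e^{it|D|}g_1,\,e^{it|D|}g_2)\|_{L^q_tL^2_x}\lesssim\|g_1\|_{\dot H^{\alpha_1}}\|g_2\|_{\dot H^{\alpha_2}}$ holds uniformly in the modulations (i.e. for $e^{it(|D|+\sigma_j)}$), then $\|B(u,v)\|_{L^q_tL^2_x}\lesssim\|u\|_{\dot H^{\alpha_1,\frac12+}}\|v\|_{\dot H^{\alpha_2,\frac12+}}$; one writes $u$ as a superposition over its modulation variable of free waves, applies Minkowski's inequality and Cauchy–Schwarz against the $\langle|\tau|-|\xi|\rangle^{\frac12+}$ weight, and similarly for $v$. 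Applying this to $B(u,v)=D^{\beta_0}(uv)$ with the part-1 bound (which holds with the same constant for each frequency-translated cone, by translation invariance in $\tau$) yields exactly $\|D^{\beta_0}(uv)\|_{L^q_tL^2_x}\lesssim\|u\|_{\dot H^{\alpha_1,\frac12+}}\|v\|_{\dot H^{\alpha_2,\frac12+}}$; relabelling $D^{\beta_0}(uv)$ and using (\ref{LV2}) to rewrite $\beta_0$ in terms of $\alpha_1,\alpha_2,q$ gives the stated form with $uv$ on the left and $D^{\alpha_j}$ absorbed into the norms on the right. The main obstacle is the first step: extracting from \cite{LV} exactly the frequency-localized homogeneous form needed and checking that the dyadic summation over $(\lambda,\mu)$ really does converge under the three conditions (\ref{LV2}),(\ref{LV5}),(\ref{LV12}) — in particular that (\ref{LV5}) is not merely necessary (from testing on the Klainerman–Tataru example) but also sufficient for the diagonal sum, which is where the $+$ (i.e. $\epsilon$ loss) in $\frac12+$ and in the strict inequalities is consumed.
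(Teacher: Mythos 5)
Your proposal is correct and follows essentially the same route as the paper, whose entire proof is to invoke Lee--Vargas \cite{LV}, Theorem 1.1 for the free-wave estimate of part 1 and then apply the standard transfer principle to obtain part 2; your elaboration of the dyadic bookkeeping and of the role of conditions (\ref{LV2})--(\ref{LV12}) just fills in what the paper calls ``easy calculations.''
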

\begin{proof}
This follows by easy calculations from \cite{LV}, Theorem 1.1. combined with the transfer principle.
\end{proof}

We now come to the definition of the solution spaces, which are very similar to the spaces introduced by \cite{KS}. We prepare this by defining a modification of the standard $L^q_t L^r_x$-spaces. 
\begin{Def}
\label{Def.1.2}
If $1 \le q, r \le
\infty$, $u \in {\mathcal S}'$ and $\widehat u$ is a tempered function, set $$
\|u\|_{{\mathcal L}_t^q {\mathcal L}_x^r} = \sup \left\{ \int_{\R^{1+n}}  |\widehat u (\tau,\xi)| \widehat v(\tau,\xi) \, d\tau d\xi : v \in {\mathcal S} , \widehat v \ge 0 ,
 \|v\|_{L^{q'}_t L^{r'}_x} = 1 \right\}, $$
where $1 = \frac{1}{q} + \frac{1}{q'}$ and $1 = \frac{1}{r}
+ \frac{1}{r'}$. Let ${\mathcal L}_t^q {\mathcal L}_x^r$ be the corresponding subspace of
${\mathcal S}'$.
\end{Def}

This is a translation invariant norm and it only
depends on the size of the Fourier transform.  Observe that 
${\mathcal L}^2_t {\mathcal L}^2_x = L^2_t L^2_x$  and
$$
 \|u\|_{{\mathcal L}_t^q {\mathcal L}_x^r} \le \|u\|_{L^{q}_t L^{r}_x}\quad \text{whenever} \quad
\widehat u \ge 0. 
$$

\begin{Def}
\label{Def} Let $r,s > \frac{n}{2}-1$ .
Our solution spaces are defined as follows: \\
$\{ (u,v) \in F^{r-1}  \times G^s \} \, , $ where \\
\begin{align*}
\|u\|_{F^{r-1}} &:= \|\Lambda_+ u\|_{H^{r-2,\half+\epsilon}} +\| \Lambda_+^{\half+2\epsilon} \Lambda^{-2 +3\epsilon} \Lambda_-^{\half} u\|_{{\mathcal L}_t^1 {\mathcal L}_x^{\infty}} \\
\|v\|_{G^s} &:= \|\Lambda_+ v\|_{H^{s-1,\half+\epsilon}}  \ ,
\end{align*}
where $\epsilon > 0$ is sufficiently small. $F^{r-1}_T$ and $G^s_T$ denotes the restriction to the time interval $[0,T]$.
\end{Def}
This is a Banach space (\cite{KS}, Prop. 4.2).

Next we recall some fundamental properties of the ${\mathcal L}_t^q {\mathcal L}_x^r$-spaces, which were given by \cite{KS}, starting with a H\"older-type estimate.

\begin{prop}\label{Prop.4.3} Suppose $\frac{1}{q} =
\frac{1}{q_{1}} + \frac{1}{q_{2}}$ and $\frac{1}{r} = \frac{1}{r_{1}} +
\frac{1}{r_{2}}$, where the $q$'s and $r$'s all belong to $[1,\infty]$.
Then $$
\|uv\|_{{\mathcal L}^q_t {\mathcal L}^r_x}  \le \|u\|_{{\mathcal L}^{q_1}_t {\mathcal L}^{r_1}_x}  \| v\|_{L^{q_2}_t  L^{r_2}_x} \, .
$$
for all $v$  with $\widehat
v \ge 0$.
\end{prop}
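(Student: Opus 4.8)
The statement to prove is Proposition \ref{Prop.4.3}, a Hölder-type estimate for the modified Lebesgue spaces $\mathcal{L}^q_t\mathcal{L}^r_x$: if $\frac{1}{q}=\frac{1}{q_1}+\frac{1}{q_2}$ and $\frac{1}{r}=\frac{1}{r_1}+\frac{1}{r_2}$, then $\|uv\|_{\mathcal{L}^q_t\mathcal{L}^r_x}\le\|u\|_{\mathcal{L}^{q_1}_t\mathcal{L}^{r_1}_x}\|v\|_{L^{q_2}_tL^{r_2}_x}$ whenever $\widehat{v}\ge0$.

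The plan is to unwind the definition of the $\mathcal{L}^q_t\mathcal{L}^r_x$-norm directly. By Definition \ref{Def.1.2}, $\|uv\|_{\mathcal{L}^q_t\mathcal{L}^r_x}$ is the supremum over test functions $w\in\mathcal{S}$ with $\widehat{w}\ge0$ and $\|w\|_{L^{q'}_tL^{r'}_x}=1$ of the pairing $\int|\widehat{uv}(\tau,\xi)|\,\widehat{w}(\tau,\xi)\,d\tau\,d\xi$. The key point is that $\widehat{uv}=\widehat{u}\ast\widehat{v}$, so $|\widehat{uv}|\le|\widehat{u}|\ast|\widehat{v}|$ pointwise, and since $\widehat{v}\ge0$ we have $|\widehat{uv}|\le|\widehat{u}|\ast\widehat{v}$. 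Hence the pairing is bounded by $\int(|\widehat{u}|\ast\widehat{v})(\tau,\xi)\,\widehat{w}(\tau,\xi)\,d\tau\,d\xi$. Now I transfer one convolution by a change of variables (Fubini): this equals $\int|\widehat{u}(\zeta)|\,(\widehat{v}\ast\widetilde{\widehat{w}})(\zeta)\,d\zeta$, where $\widetilde{g}(\zeta)=\widehat{w}$ reflected appropriately; more concretely $\widehat{v}\ast\widehat{w}$ shows up as the Fourier transform of $v\cdot(\text{something built from }w)$. The cleanest route: write the pairing against $w$ as a pairing of $|\widehat u|$ against $\widehat{v}\ast\widehat{w}$ (up to reflections), recognise $\widehat v\ast\widehat w$ as the Fourier transform of the product $vw$ (both with nonnegative transforms, so $\widehat{v}\ast\widehat{w}=\widehat{vw}\ge0$), and note $\|vw\|_{L^{q_1'}_tL^{r_1'}_x}\le\|v\|_{L^{q_2}_tL^{r_2}_x}\|w\|_{L^{q'}_tL^{r'}_x}$ by ordinary Hölder, using the exponent relations $\frac{1}{q_1'}=\frac{1}{q'}-\frac{1}{q_2}$ (equivalently $\frac{1}{q}=\frac{1}{q_1}+\frac{1}{q_2}$), and similarly in $r$. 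Taking the supremum over $w$ then recovers exactly $\|u\|_{\mathcal{L}^{q_1}_t\mathcal{L}^{r_1}_x}$ by Definition \ref{Def.1.2}.

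So the chain of inequalities is: $\int|\widehat{uv}|\widehat{w}\le\int(|\widehat u|\ast\widehat v)\widehat w=\int|\widehat u|(\widehat v\ast\widehat w)=\int|\widehat u|\,\widehat{vw}\le\|u\|_{\mathcal{L}^{q_1}_t\mathcal{L}^{r_1}_x}\|vw\|_{L^{q_1'}_tL^{r_1'}_x}\le\|u\|_{\mathcal{L}^{q_1}_t\mathcal{L}^{r_1}_x}\|v\|_{L^{q_2}_tL^{r_2}_x}\|w\|_{L^{q'}_tL^{r'}_x}$, and since $\|w\|_{L^{q'}_tL^{r'}_x}=1$ we take the sup. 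I should be a little careful with which reflection/conjugation of $w$ appears when I move the convolution (since $\widehat{fg}=\widehat f\ast\widehat g$ but the pairing $\int F(\widehat g\ast \widehat w)$ corresponds to $\int (F\ast\check{\widehat w})\widehat g$ or so); one fixes this by replacing $w$ with its complex conjugate or spatial reflection, which leaves $\widehat w\ge0$ and all $L^p$ norms unchanged, so it is harmless. A small approximation remark may be needed to reduce to Schwartz $u,v$ so that all the Fourier manipulations are literally valid, but this is routine given the density built into the definitions of these spaces.

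The main (and essentially only) obstacle is purely bookkeeping: keeping straight the reflections/conjugations when commuting the convolution past the duality pairing, and verifying that the dual exponents line up — $\frac{1}{q_1'}+\frac{1}{q_2}=1-\frac{1}{q_1}+\frac{1}{q_2}$ must equal $\frac{1}{q'}=1-\frac{1}{q}=1-\frac{1}{q_1}-\frac{1}{q_2}$? No: rather one uses $\frac{1}{q'}=\frac{1}{q_1'}+\frac{1}{q_2'}-1$ type relations; the correct statement is that $\frac{1}{q}=\frac{1}{q_1}+\frac{1}{q_2}$ is equivalent to $\frac{1}{q'}+\frac{1}{q_1}=1+\frac{1}{q_2'}$... the point is simply that Hölder with the product $vw$ in $L^{q_1'}_tL^{r_1'}_x$ requires $\frac{1}{q_1'}=\frac{1}{q_2}+\frac{1}{q'}$, i.e. $1-\frac{1}{q_1}=\frac{1}{q_2}+1-\frac{1}{q}$, i.e. $\frac{1}{q}=\frac{1}{q_1}+\frac{1}{q_2}$, which is exactly the hypothesis, and likewise for $r$. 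There is no analytic difficulty beyond this; the whole argument is a soft duality-plus-Hölder computation, and this is presumably why \cite{KS} state it without proof.
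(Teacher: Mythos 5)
Your argument is correct: unwinding Definition \ref{Def.1.2}, bounding $|\widehat{uv}|\le|\widehat u|\ast\widehat v$, moving the convolution across the pairing by Fubini (which, as you note, just costs a harmless conjugation/reflection making the auxiliary test function $\overline{v}w$, still with nonnegative transform), and then applying ordinary H\"older with the dual exponents $\tfrac1{q_1'}=\tfrac1{q_2}+\tfrac1{q'}$, $\tfrac1{r_1'}=\tfrac1{r_2}+\tfrac1{r'}$ is exactly the standard proof. The paper itself gives no proof and simply cites Klainerman--Selberg, whose argument is the same duality-plus-H\"older computation you describe, so there is nothing to add beyond the routine approximation remark you already flag.
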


The following duality argument holds. 
\begin{prop}\label{Prop.4.5} Let $1 \le a,b,q,r \le
\infty$.
{
\renewcommand{\theenumi}{\alph{enumi}}
\renewcommand{\labelenumi}{(\theenumi)}
\begin{enumerate}
\item If
\begin{equation}\label{DualityA}
\|G\|_{L^{a'}_t L^{b'}_x} \lesssim \| \Lambda^{\alpha}
\Lambda_{-}^{\beta} G\|_{L^{q'}_t L^{r'}_x} 
\end{equation} 
for all $G$ ,
 then
$$
\|F\|_{L^{q}_t L^{r}_x} \lesssim \| \Lambda^{\alpha}
\Lambda_{-}^{\beta} F\|_{L^{a}_t L^{b}_x} 
$$
for all $F$ .
\item If \eqref{DualityA} holds for all $G$ with $\widehat G
\ge 0$, then
$$
\|F\|_{{\mathcal L}^{q}_t {\mathcal L}^{r}_x} \lesssim \| \Lambda^{\alpha}
\Lambda_{-}^{\beta} F\|_{{\mathcal L}^{a}_t {\mathcal L}^{b}_x} \, .
$$
for all $F$.
\end{enumerate}
}
\end{prop}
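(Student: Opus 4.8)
The plan is to run the standard duality argument, exploiting that $\Lambda^{\alpha}$ and $\Lambda_-^{\beta}$ are Fourier multipliers with real, even and strictly positive symbols $\angles{\xi}^{\alpha}$ and $\angles{|\tau|-|\xi|}^{\beta}$. Three consequences of this will be used repeatedly: these operators are formally self-adjoint for the pairing $\angles{u,v}=\int_{\R^{1+n}}u\,\overline{v}\,dx\,dt$; their inverses $\Lambda^{-\alpha}$, $\Lambda_-^{-\beta}$ are again multipliers of the same type; and $|\widehat{\Lambda^{\alpha}\Lambda_-^{\beta}u}| = \angles{\xi}^{\alpha}\angles{|\tau|-|\xi|}^{\beta}\,|\widehat u|$. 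I will also use that $\mathcal S$ is norming for mixed Lebesgue spaces, i.e.\ $\|F\|_{L^q_tL^r_x}=\sup\{|\angles{F,g}|:g\in\mathcal S,\ \|g\|_{L^{q'}_tL^{r'}_x}\le 1\}$ for $1\le q,r\le\infty$.

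For part (a), I would fix $g\in\mathcal S$ with $\|g\|_{L^{q'}_tL^{r'}_x}\le 1$ and set $G:=\Lambda^{-\alpha}\Lambda_-^{-\beta}g$, so that $\Lambda^{\alpha}\Lambda_-^{\beta}G=g$. Moving the multipliers onto $F$ by self-adjointness gives $\angles{F,g}=\angles{\Lambda^{\alpha}\Lambda_-^{\beta}F,\,G}$, hence by H\"older in $t$ and $x$,
$$ |\angles{F,g}|\le \|\Lambda^{\alpha}\Lambda_-^{\beta}F\|_{L^{a}_tL^{b}_x}\,\|G\|_{L^{a'}_tL^{b'}_x}. $$
The hypothesis \eqref{DualityA} applied to $G$ bounds $\|G\|_{L^{a'}_tL^{b'}_x}\lesssim \|\Lambda^{\alpha}\Lambda_-^{\beta}G\|_{L^{q'}_tL^{r'}_x}=\|g\|_{L^{q'}_tL^{r'}_x}\le 1$, and taking the supremum over such $g$ finishes part (a).

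For part (b) I would argue identically, but through Definition \ref{Def.1.2} instead of honest $L^p$-duality. Given $v\in\mathcal S$ with $\widehat v\ge 0$ and $\|v\|_{L^{q'}_tL^{r'}_x}=1$, set $G:=\Lambda^{-\alpha}\Lambda_-^{-\beta}v$; strict positivity of $\angles{\xi}^{-\alpha}\angles{|\tau|-|\xi|}^{-\beta}$ ensures $\widehat G\ge 0$. Using the multiplicativity of the symbols on $|\widehat{\,\cdot\,}|$,
$$ \int_{\R^{1+n}}|\widehat F|\,\widehat v = \int_{\R^{1+n}}\bigl|\widehat{\Lambda^{\alpha}\Lambda_-^{\beta}F}\bigr|\,\widehat G \le \|\Lambda^{\alpha}\Lambda_-^{\beta}F\|_{{\mathcal L}^{a}_t{\mathcal L}^{b}_x}\,\|G\|_{L^{a'}_tL^{b'}_x}, $$
where the inequality is the defining property of the ${\mathcal L}^{a}_t{\mathcal L}^{b}_x$-norm and uses $\widehat G\ge 0$. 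The hypothesis \eqref{DualityA}, now assumed for functions with nonnegative Fourier transform, applies to $G$ and gives $\|G\|_{L^{a'}_tL^{b'}_x}\lesssim \|\Lambda^{\alpha}\Lambda_-^{\beta}G\|_{L^{q'}_tL^{r'}_x}=\|v\|_{L^{q'}_tL^{r'}_x}=1$; taking the supremum over admissible $v$ and recalling Definition \ref{Def.1.2} gives the claim.

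I do not expect a genuine obstacle here: the statement is essentially bookkeeping. The two points that need a little care are the legitimacy of the identity $\angles{F,g}=\angles{\Lambda^{\alpha}\Lambda_-^{\beta}F,\Lambda^{-\alpha}\Lambda_-^{-\beta}g}$ and of feeding $G=\Lambda^{-\alpha}\Lambda_-^{-\beta}g$ into \eqref{DualityA}, which one handles by first restricting to a suitable dense subclass of $F$ and $g$ and then passing to the limit; and, in part (b), the fact that it is precisely the \emph{positivity} of the symbols of $\Lambda^{-\alpha}$ and $\Lambda_-^{-\beta}$ that makes $G$ an admissible competitor both in the ${\mathcal L}$-norm and in the hypothesis, which is exactly where replacing $L^q_tL^r_x$ by the ``Fourier-size'' spaces ${\mathcal L}^q_t{\mathcal L}^r_x$ really matters.
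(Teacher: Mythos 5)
Your argument is correct and is precisely the standard duality argument that the paper defers to \cite{KS}, Proposition 4.5 for: test against $g$ (resp.\ $v$ with $\widehat v\ge 0$), substitute $G=\Lambda^{-\alpha}\Lambda_-^{-\beta}g$, move the self-adjoint multipliers onto $F$, apply H\"older (resp.\ the defining supremum of the ${\mathcal L}^a_t{\mathcal L}^b_x$-norm), and invoke the hypothesis on $G$. The two technical caveats you flag (density of the test class and positivity of the symbols making $G$ admissible in part (b)) are exactly the right ones, so there is nothing to add.
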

\begin{proof}
\cite{KS}, Proposition 4.5. 
\end{proof}

The next proposition shows that a Sobolev type embedding also carries over to the ${\mathcal L}_t^q {\mathcal L}_x^r$-spaces.
\begin{prop}
\label{Cor.4.6} Let $1 \le a,b,q,r \le \infty$ , $\alpha,\beta \in {\mathbb R}$ .
If
$$ \|\Lambda^{\alpha} \Lambda_{-}^{\beta}u\|_{L^q_t L^r_x} \lesssim \|u\|_{ L^a_t L^b_x}  $$
for all $u$ with $\widehat{u} \ge 0$ , then
$$\|\Lambda^{\alpha} \Lambda_{-}^{\beta}u\|_{{\mathcal L}^q_t {\mathcal L}^r_x} \lesssim \|u\|_{ {\mathcal L}^a_t {\mathcal L}^b_x } \, . $$
\end{prop}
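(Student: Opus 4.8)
The plan is to run a duality argument in the spirit of Proposition \ref{Prop.4.5}, exploiting that $\Lambda^{\alpha}\Lambda_-^{\beta}$ is a Fourier multiplier whose symbol $m(\tau,\xi)=\angles{\xi}^{\alpha}\angles{|\tau|-|\xi|}^{\beta}$ is real and strictly positive (with an equally tempered reciprocal), so that it commutes with the modulus of Fourier transforms and preserves the class of tempered functions with nonnegative Fourier transform. By a routine density argument it suffices to treat $u\in\mathcal{S}$.

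First I would unfold the left-hand norm via Definition \ref{Def.1.2}. Since $|\widehat{\Lambda^{\alpha}\Lambda_-^{\beta}u}|=m|\widehat u|$, we have
$$ \|\Lambda^{\alpha}\Lambda_-^{\beta}u\|_{\mathcal{L}^q_t\mathcal{L}^r_x}=\sup\Big\{\int_{\R^{1+n}} m(\tau,\xi)\,|\widehat u(\tau,\xi)|\,\widehat v(\tau,\xi)\,d\tau\,d\xi \ :\ v\in\mathcal{S},\ \widehat v\ge 0,\ \|v\|_{L^{q'}_tL^{r'}_x}=1\Big\}. $$
Because $m\ge 0$ and $\widehat v\ge 0$, the integrand regroups as $|\widehat u|\cdot(m\widehat v)=|\widehat u|\cdot\widehat{\Lambda^{\alpha}\Lambda_-^{\beta}v}$ with $\widehat{\Lambda^{\alpha}\Lambda_-^{\beta}v}\ge 0$; applying the definition of the $\mathcal{L}^a_t\mathcal{L}^b_x$-norm to the factor $u$ then gives
$$ \int m|\widehat u|\,\widehat v=\int|\widehat u|\,\widehat{\Lambda^{\alpha}\Lambda_-^{\beta}v}\ \le\ \|u\|_{\mathcal{L}^a_t\mathcal{L}^b_x}\,\|\Lambda^{\alpha}\Lambda_-^{\beta}v\|_{L^{a'}_tL^{b'}_x}. $$
Thus the whole statement is reduced to the dual-exponent estimate $\|\Lambda^{\alpha}\Lambda_-^{\beta}v\|_{L^{a'}_tL^{b'}_x}\lesssim\|v\|_{L^{q'}_tL^{r'}_x}$ for all $v$ with $\widehat v\ge 0$.

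This last estimate is the adjoint form of the hypothesis, and I would obtain it by dualising, exactly as in Proposition \ref{Prop.4.5}: write $\|\Lambda^{\alpha}\Lambda_-^{\beta}v\|_{L^{a'}_tL^{b'}_x}$ as a supremum of pairings against $w$ with $\|w\|_{L^a_tL^b_x}\le 1$, transfer $\Lambda^{\alpha}\Lambda_-^{\beta}$ onto $w$ by Parseval, estimate $\big|\int m\widehat v\,\overline{\widehat w}\big|\le\int m|\widehat w|\,\widehat v$ using $\widehat v\ge 0$, rewrite $m|\widehat w|$ as the Fourier transform of $\Lambda^{\alpha}\Lambda_-^{\beta}\widetilde w$ with $\widehat{\widetilde w}:=|\widehat w|\ge 0$, and pair back to reduce to the hypothesis applied to $\widetilde w$ — the nonnegativity constraint in Definition \ref{Def.1.2} being precisely what lets one insert the modulus (and hence stay within the class to which the hypothesis applies). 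The analytic content therefore lies entirely in the hypothesis; the step I expect to need the most care is this duality/positivity bookkeeping — keeping straight at each stage which Fourier transforms are required to be nonnegative, and supplying the Schwartz-approximation arguments that make all the pairings above legitimate — which is the same mechanism already isolated in Propositions \ref{Prop.4.3} and \ref{Prop.4.5}.
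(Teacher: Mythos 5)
The paper offers no argument for this proposition (it simply cites Klainerman--Selberg), so your proposal must stand on its own. Your first reduction is correct and is exactly the mechanism of Proposition \ref{Prop.4.5}(b): since every test function $v$ in Definition \ref{Def.1.2} satisfies $\widehat v\ge 0$, and hence $m\widehat v\ge 0$, the whole statement reduces to the \emph{dual-exponent} bound $\|\Lambda^{\alpha}\Lambda_-^{\beta}v\|_{L^{a'}_tL^{b'}_x}\lesssim\|v\|_{L^{q'}_tL^{r'}_x}$ for all $v$ with $\widehat v\ge 0$.

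The gap is in your second step, where you try to deduce this dual bound from the hypothesis. In the pairing $\sup_w\bigl|\int m\widehat v\,\overline{\widehat w}\bigr|$ the function $w$ ranges over the unit ball of $L^a_tL^b_x$ with no sign condition on $\widehat w$; after majorizing by $\int m|\widehat w|\,\widehat v$ you apply the hypothesis to $\widetilde w$ defined by $\widehat{\widetilde w}=|\widehat w|$, and this leaves you with the factor $\|\widetilde w\|_{L^a_tL^b_x}$, which is \emph{not} controlled by $\|w\|_{L^a_tL^b_x}$: the operation $\widehat w\mapsto|\widehat w|$ is unbounded on $L^a_tL^b_x$ except when $a=b=2$ (already on $L^p(\mathbb{T})$ with $p>2$, a lacunary-type sum $\sum\pm e^{ikx}$ of norm $\sim N^{1/2}$ turns into a Dirichlet kernel of norm $\sim N^{1-1/p}$ after the moduli are taken). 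So the argument does not close, and this is not bookkeeping but the actual content of the proposition: a positivity-restricted $L^aL^b\to L^qL^r$ estimate is in general not self-dual within the class $\widehat u\ge0$, which is precisely why Proposition \ref{Prop.4.5}(b) takes as its \emph{hypothesis} the estimate at the dual exponents for nonnegative Fourier transforms instead of deriving it from the primal one. To repair the proof you must either verify the dual-exponent estimate $\|\Lambda^{\alpha}\Lambda_-^{\beta}v\|_{L^{a'}_tL^{b'}_x}\lesssim\|v\|_{L^{q'}_tL^{r'}_x}$ for $\widehat v\ge0$ directly for the multipliers and exponents actually used (this is what happens in the applications, cf. Prop. \ref{Prop.4.8}), or strengthen the hypothesis to hold for all $u$ without the sign restriction, in which case standard duality as in Proposition \ref{Prop.4.5}(a) supplies the bound you need for the test functions.
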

\begin{proof} \cite{KS}, Cor. 4.6.
\end{proof}

The following result is also fundamental for the proof of our main theorem.
\begin{prop}\label{Prop.4.8} Let $ n \ge 4$ . If $\frac{2(n-1)}{n-3} \le r < \infty $,  $s = \frac{n}{2} -\frac{n}{r}-\half $ and $\theta > \half$ ,
then
$$ \|u\|_{{\mathcal L}^1_t {\mathcal L}^{r}_x}
\lesssim   \|\Lambda^s \Lambda_-^{\theta} u\|_{{\mathcal L}^{1}_t {\mathcal L}^2_x} \, .$$
\end{prop}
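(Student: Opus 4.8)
The plan is to deduce the $\mathcal{L}^1_t\mathcal{L}^r_x$ estimate from a pointwise-in-frequency estimate on the standard $L^q_tL^r_x$-level and then to pass to the $\mathcal L$-spaces via Proposition \ref{Cor.4.6}. First I would note that by Proposition \ref{Cor.4.6} (with $a=1$, $b=2$, $q=1$, and the multiplier $\Lambda^s\Lambda_-^\theta$ replaced by its inverse, i.e. reading the roles of $u$ and $\Lambda^{-s}\Lambda_-^{-\theta}u$), it suffices to prove the \emph{non-calligraphic} estimate
$$ \|u\|_{L^1_t L^r_x} \lesssim \|\Lambda^s \Lambda_-^\theta u\|_{L^1_t L^2_x} $$
for all $u$ with $\widehat u\ge 0$; more precisely, setting $v=\Lambda^s\Lambda_-^\theta u$ one wants $\|\Lambda^{-s}\Lambda_-^{-\theta}v\|_{L^1_tL^r_x}\lesssim\|v\|_{L^1_tL^2_x}$, and then Proposition \ref{Cor.4.6} upgrades this to the calligraphic statement. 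So the whole matter reduces to a fixed-time (or rather, $L^1$-in-time) Sobolev-type embedding with a gain from the $\Lambda_-^\theta$ weight.

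The key step is therefore a pointwise-in-$t$ estimate at the level of $L^2_x \to L^r_x$. By Minkowski's inequality in time it is enough to show, for each fixed $t$, that
$$ \|\Lambda^{-s}\Lambda_-^{-\theta}v(t,\cdot)\|_{L^r_x} \lesssim \|v(t,\cdot)\|_{L^2_x}, $$
but $\Lambda_-$ acts in the space-time Fourier variable $|\tau|-|\xi|$, so this must be done after decomposing $v$ dyadically in the modulation $\langle|\tau|-|\xi|\rangle\sim 2^j$. On the piece where the modulation is $\sim 2^j$ one writes $v_j$ as a superposition over $\lambda\in[-2^j,2^j]$ of functions whose space-time Fourier support lies near the shifted cone $\{|\tau|=|\xi|+\lambda\}$; each such shifted-cone piece is, up to modulation $O(1)$, a free wave, and one applies the fixed-time Strichartz/Sobolev bound $\|D^{-s}w(t)\|_{L^r_x}\lesssim \|w(t)\|_{L^2_x}$ that holds exactly when $s=\frac n2-\frac nr$. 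Summing in $\lambda$ costs a factor $2^j$, which is exactly absorbed by $\Lambda_-^{-\theta}\sim 2^{-j\theta}$ together with $\theta>\tfrac12$ — wait, one needs the $2^{j(1-\theta)}$ factor to be summable, which requires $\theta>1$; the correct bookkeeping is that the shifted-cone decomposition with modulation $2^j$ gives an $L^2$-orthogonal decomposition (Plancherel in $\lambda$), so one only loses $2^{j/2}$ rather than $2^j$, and then $\theta>\tfrac12$ suffices. The index $s=\frac n2-\frac nr-\frac12$ appears precisely as (the exponent $\frac n2-\frac nr$ for the fixed-time Sobolev embedding) minus (the $\frac12$ absorbed into the $L^2$-orthogonality loss). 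The restriction $r\ge\frac{2(n-1)}{n-3}$ guarantees that the relevant fixed-time curved-Strichartz estimate on the cone is available in the correct range, matching Proposition \ref{Str}.

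The main obstacle I expect is making the shifted-cone/modulation decomposition rigorous and quantifying the loss: one must check that after localizing to modulation $2^j$ the family of shifted cones $\{|\tau|-|\xi|=\lambda\}$ over $\lambda\in[-2^j,2^j]$ can be handled with only an $L^2$-orthogonality (not $\ell^1$) loss in $\lambda$, and that the fixed-time bound for a single shifted cone is uniform in $\lambda$ after rescaling. An alternative, cleaner route, and probably the one actually intended, is to bypass the hand decomposition entirely: interpolate (or chain) the Strichartz estimate of Proposition \ref{Str} in the form $\|u\|_{L^2_tL^r_x}\lesssim\|\Lambda^{\frac n2-\frac nr-\frac12}\Lambda_-^{\frac12+}u\|_{L^2_tL^2_x}$ with the trivial bound $\|u\|_{L^\infty_tL^r_x}\lesssim$ (a $\Lambda_-$-weighted $L^\infty_tL^2_x$ norm obtained from $\langle|\tau|-|\xi|\rangle^\theta$, $\theta>\tfrac12$, via Cauchy-Schwarz in $\tau$), to get $\|u\|_{L^q_tL^r_x}$ for all $1\le q\le 2$; taking $q=1$ and then invoking Proposition \ref{Cor.4.6} to transfer to the $\mathcal L$-spaces yields the claim directly. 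I would present this interpolation argument, as it avoids the delicate orthogonality count and uses only results already in the excerpt.
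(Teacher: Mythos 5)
The paper's own ``proof'' of this proposition is a bare citation to \cite{KS}, Lemma 4.8, so your argument has to stand on its own --- and the route you say you would actually present is the one that fails most decisively. Interpolating the $L^2_tL^r_x$ Strichartz bound of Proposition \ref{Str} with an $L^\infty_tL^r_x$ bound produces $L^q_tL^r_x$ estimates only for $q\in[2,\infty]$; the exponent $q=1$ lies outside the interpolation segment between $q=2$ and $q=\infty$, so no chaining of those two endpoints can reach $\mathcal{L}^1_t$. This is not a technicality: genuine $L^1_tL^r_x$ Strichartz-type bounds with $L^2$-based right-hand sides are exactly what fail for the wave equation (randomized, time-separated translates of a wave packet give $N^{1/q}$ on the left against $N^{1/2}$ on the right), and the entire point of the $\mathcal{L}^q_t\mathcal{L}^r_x$ spaces is that these counterexamples rely on phase cancellations invisible to norms that see only $|\widehat u|$ --- note that a time translate destroys the nonnegativity of $\widehat u$. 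The mechanism that actually makes a $\mathcal{L}^1_t$ estimate accessible is the duality statement, Proposition \ref{Prop.4.5}(b), which converts the claim into a dual estimate in $L^\infty_t$-based norms for test functions with nonnegative Fourier transform; neither of your routes invokes it.

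Your first route contains a second, independent gap. The reduction via Proposition \ref{Cor.4.6} to the estimate $\|\Lambda^{-s}\Lambda_-^{-\theta}v\|_{L^1_tL^r_x}\lesssim\|v\|_{L^1_tL^2_x}$ for $\widehat v\ge0$ is formally legitimate, but your bookkeeping for the exponent $s=\frac n2-\frac nr-\frac12$ double-counts the orthogonality gain. After localizing to modulation $\sim 2^j$ and slicing into $\sim 2^j$ shifted cones, the factor $2^{-j/2}$ recovered from $L^2$-orthogonality in $\lambda$ is a gain in the \emph{modulation} parameter and is entirely consumed in summing those $2^j$ slices --- that is precisely what lets $\theta>\frac12$ suffice instead of $\theta>1$. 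It cannot additionally purchase a factor $|\xi|^{-1/2}$ in the \emph{spatial} frequency. Since the fixed-time bound per slice is only Bernstein/Sobolev, which requires the exponent $\frac n2-\frac nr$ and is sharp there (a free wave at $t=0$ is just its data), your argument proves at best the statement with $s=\frac n2-\frac nr$. The missing $\frac12$ is the Strichartz gain, which comes from the curvature of the cone together with time-averaging over $q\ge2$; consistently, the hypothesis $r\ge\frac{2(n-1)}{n-3}$ --- exactly the $q=2$ admissibility threshold, and exactly what the Knapp example forces for this inequality --- plays no substantive role anywhere in your argument, a reliable sign that it cannot be producing the claimed exponent.
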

\begin{proof} \cite{KS}, Lemma 4.8.
\end{proof}

\begin{Cor}
\label{Cor.4.2}
 Let $ n \ge 4$ and $\theta > \half$ ,
then
$$ \|u\|_{{\mathcal L}^1_t {\mathcal L}^{\infty }_x}
\lesssim   \|\Lambda^{\frac{n}{2}-\half+} \Lambda_-^{\theta} u\|_{{\mathcal L}^{1}_t {\mathcal L}^2_x} \, .$$
\end{Cor}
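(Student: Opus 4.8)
The plan is to deduce Corollary \ref{Cor.4.2} directly from Proposition \ref{Prop.4.8} by choosing the parameter $r$ appropriately and then absorbing the loss into the slack in the indices. Proposition \ref{Prop.4.8} gives, for every finite $r$ in the admissible range $\frac{2(n-1)}{n-3} \le r < \infty$ and every $\theta > \half$, the estimate
$$ \|u\|_{{\mathcal L}^1_t {\mathcal L}^{r}_x} \lesssim \|\Lambda^{\frac{n}{2}-\frac{n}{r}-\half} \Lambda_-^{\theta} u\|_{{\mathcal L}^{1}_t {\mathcal L}^2_x} \, . $$
The target norm on the left of the corollary is ${\mathcal L}^1_t {\mathcal L}^{\infty}_x$, which is not quite an admissible endpoint (we would need $r=\infty$, which is excluded), so the strategy is: first pass from ${\mathcal L}^{\infty}_x$ to ${\mathcal L}^r_x$ for a large but finite $r$, at the cost of a few spatial derivatives (a Bernstein/Sobolev-type inequality that, by Proposition \ref{Cor.4.6}, transfers from the corresponding $L^q_t L^r_x$ statement to the ${\mathcal L}$-scale), and then invoke Proposition \ref{Prop.4.8}.

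Concretely, I would fix $\theta > \half$, pick $r$ finite with $r \ge \frac{2(n-1)}{n-3}$ and large enough that $\frac{n}{r}$ is smaller than the ``$+$'' margin hidden in the exponent $\frac{n}{2}-\half+$, and write $s(r) = \frac{n}{2}-\frac{n}{r}-\half$. Proposition \ref{Prop.4.8} then yields $\|u\|_{{\mathcal L}^1_t {\mathcal L}^r_x} \lesssim \|\Lambda^{s(r)} \Lambda_-^{\theta} u\|_{{\mathcal L}^1_t {\mathcal L}^2_x}$. It remains to control $\|u\|_{{\mathcal L}^1_t {\mathcal L}^{\infty}_x}$ by $\|\Lambda^{\frac nr} u\|_{{\mathcal L}^1_t {\mathcal L}^r_x}$, i.e. a purely spatial Sobolev embedding $L^r_x \hookrightarrow L^{\infty}_x$ with $\frac nr$ derivatives of slack (applied at each fixed time and for functions with nonnegative Fourier transform, so that Proposition \ref{Cor.4.6} applies with $q=a=1$, $b=r$, $\beta=0$, $\alpha=\frac nr$). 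Chaining the two estimates and noting $\Lambda^{\frac nr}\Lambda^{s(r)} = \Lambda^{\frac n2-\half}$ (exactly), one gets $\|u\|_{{\mathcal L}^1_t {\mathcal L}^{\infty}_x} \lesssim \|\Lambda^{\frac n2-\half}\Lambda_-^{\theta} u\|_{{\mathcal L}^1_t {\mathcal L}^2_x}$, and since $\frac n2-\half \le \frac n2-\half+$ trivially, the claimed bound follows.

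The only delicate point is the endpoint $r = \infty$: the Sobolev embedding $W^{n/r,r} \hookrightarrow L^\infty$ fails at $r=\infty$ and degrades (only logarithmically, but still) as $r \to \infty$, which is precisely why the statement carries the extra ``$+$'' in $\frac n2 - \half+$ rather than the sharp $\frac n2 - \half$. So the main obstacle — a minor one — is to organize the two-step argument so that the $\epsilon$ of room is spent cleanly: one should take $r$ finite once and for all (say $r$ with $\frac nr < \epsilon$, where $\epsilon$ is the small number in the exponent $\frac n2-\half+$), and then there is genuine slack and no borderline Sobolev embedding is invoked. Everything else is a routine transfer from the Lebesgue-space inequalities to the ${\mathcal L}$-scale via Propositions \ref{Cor.4.6} and, if one prefers the dual formulation, \ref{Prop.4.5}.
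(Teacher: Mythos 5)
Your argument is correct and is exactly the paper's proof: the paper derives the corollary from Proposition \ref{Prop.4.8} combined with the Sobolev-type transfer of Proposition \ref{Cor.4.6}, which is precisely your two-step chain (large finite $r$, then $\Lambda^{\frac{n}{r}+}:{\mathcal L}^r_x\to{\mathcal L}^\infty_x$), with the subcritical ``$+$'' absorbing the non-endpoint Sobolev embedding just as you note in your final paragraph.
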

\begin{proof} This follows from Prop. \ref{Prop.4.8} and  Prop. \ref{Cor.4.6}.
\end{proof}

We also need an elementary estimate which is used as a tool for replacing $H^{s,-\half+}$-norms by $H^{s,-\half-}$-norms.
\begin{lemma}\label{Lemma8.10} Let $\alpha,\beta
\ge 0$. Then
\begin{align*}
\Lambda_{-}^{-\beta}(uv) &\precsim \Lambda_-^{-\alpha-\beta} (\Lambda_+^{\alpha} u \Lambda_+^{\alpha}v) \, , \\
\Lambda_{-}^{-\beta}(uv) &\precsim
\Lambda_{-}^{-\alpha-\beta}(u \Lambda^{\alpha} v) + u \Lambda^{-\beta} v
\end{align*}
for all $u$ and $v$ with $\widehat u, \widehat v \ge 0$. \end{lemma}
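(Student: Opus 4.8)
The plan is to verify both inequalities of Lemma~\ref{Lemma8.10} at the level of Fourier transforms, which is legitimate since all the operators $\Lambda_-^{\gamma}$, $\Lambda_+^{\gamma}$, $\Lambda^{\gamma}$ act as multipliers with explicit nonnegative symbols, and since $\widehat u,\widehat v\ge 0$ the convolution $\widehat{uv}=\widehat u * \widehat v$ is also nonnegative. Writing $\xi_0=\xi_1+\xi_2$ and $\tau_0=\tau_1+\tau_2$ for the output/input frequencies, the relation $u\precsim w$ that we want to establish reduces to a pointwise bound on the multiplier of the left side by the Fourier transform of the right side, and by the nonnegativity it suffices to dominate, for every admissible splitting, the symbol evaluated at $(\tau_0,\xi_0)$ by the product of the symbols evaluated at $(\tau_1,\xi_1)$ and $(\tau_2,\xi_2)$ times the appropriate output factor.

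For the first estimate, after peeling off the common factor $\Lambda_-^{-\beta}$ it suffices to show $\langle ||\tau_0|-|\xi_0||\rangle^{-\alpha}\lesssim (|\tau_1|+|\xi_1|)^{\alpha}(|\tau_2|+|\xi_2|)^{\alpha}$, which is immediate because the left side is $\le 1$ (as $\alpha\ge 0$) while each factor on the right is $\ge 1$. Hence even the stronger statement $\Lambda_-^{-\beta}(uv)\precsim\Lambda_-^{-\beta}(\Lambda_+^{\alpha}u\,\Lambda_+^{\alpha}v)$ holds, and a fortiori the claimed one after inserting the harmless extra $\Lambda_-^{-\alpha}$; here one just notes $\langle||\tau_0|-|\xi_0||\rangle^{-\alpha-\beta}\le\langle||\tau_0|-|\xi_0||\rangle^{-\beta}$. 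So the first line is essentially trivial.

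The substance is in the second line. Here I would use the elementary dispersive inequality relating the three hyperbolic weights: $||\tau_0|-|\xi_0||\lesssim ||\tau_1|-|\xi_1|| + ||\tau_2|-|\xi_2|| + \min(|\xi_1|,|\xi_2|)$, a standard consequence of $\tau_0=\tau_1+\tau_2$, $\xi_0=\xi_1+\xi_2$ and the triangle inequality (one writes $|\tau_0|\le|\tau_1|+|\tau_2|$ and compares with $|\xi_1+\xi_2|$, using $|\,|\xi_1|+|\xi_2|-|\xi_1+\xi_2|\,|\lesssim\min(|\xi_1|,|\xi_2|)$ only when the signs align, handling the opposite-sign case directly). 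Consequently $\langle||\tau_0|-|\xi_0||\rangle\lesssim\langle||\tau_1|-|\xi_1||\rangle + \langle||\tau_2|-|\xi_2||\rangle + \langle\min(|\xi_1|,|\xi_2|)\rangle$. Raising to the power $-\beta\le 0$ and using that for nonnegative numbers $(a+b+c)^{-\beta}\lesssim a^{-\beta}+b^{-\beta}+c^{-\beta}$ is false in general but $(a+b+c)^{-\beta}\le\min(a,b,c)^{-\beta}$ when... — more carefully, since $-\beta\le 0$ we have $\langle||\tau_0|-|\xi_0||\rangle^{-\beta}\gtrsim$ nothing useful directly, so instead I bound it \emph{from above} only after deciding which term dominates: in the region where $||\tau_0|-|\xi_0||\gtrsim \min(|\xi_1|,|\xi_2|)$ it is controlled by $\langle||\tau_1|-|\xi_1||\rangle + \langle||\tau_2|-|\xi_2||\rangle$, giving the first summand $\Lambda_-^{-\alpha-\beta}(u\Lambda^{\alpha}v)$ (plus the symmetric one, which is absorbed since we only need an upper bound by the sum); in the complementary region $\langle||\tau_0|-|\xi_0||\rangle\lesssim\langle\min(|\xi_1|,|\xi_2|)\rangle\le\langle|\xi_j|\rangle$ for the smaller index $j$, so $\langle||\tau_0|-|\xi_0||\rangle^{-\beta}\cdot\text{(nothing)}$ — here one instead estimates $1\lesssim\langle||\tau_0|-|\xi_0||\rangle^{-\beta}\langle|\xi_j|\rangle^{\beta}$ is the wrong direction, so the right move is: in this region simply drop the weight, $\Lambda_-^{-\beta}(uv)\precsim uv\precsim u\,\Lambda^{-\beta}v$ after noting $\langle|\xi_2|\rangle^{\beta}\gtrsim 1$... . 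The cleanest route, and the one I would actually write, is: by the dispersive inequality either $\langle||\tau_1|-|\xi_1||\rangle\gtrsim\langle||\tau_0|-|\xi_0||\rangle$ (then the $\Lambda_+$-free version of the first line handles it), or $\langle||\tau_2|-|\xi_2||\rangle\gtrsim\langle||\tau_0|-|\xi_0||\rangle$ and we obtain $\Lambda_-^{-\alpha-\beta}(u\Lambda^{\alpha}v)$ after trading $\langle||\tau_2|-|\xi_2||\rangle^{\alpha}$, or $\langle\min(|\xi_1|,|\xi_2|)\rangle\gtrsim\langle||\tau_0|-|\xi_0||\rangle$, in which case $\langle||\tau_0|-|\xi_0||\rangle^{-\beta}\le 1\le\langle|\xi_{\min}|\rangle^{0}$ and $uv\precsim u\,\Lambda^{-\beta}v + \Lambda^{-\beta}u\,v$, which after symmetrizing into $u\,\Lambda^{-\beta}v$ is the second term.

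The only genuine obstacle is getting the case analysis for the second inequality clean, in particular making sure that in the "diagonal-frequency" region $\langle||\tau_0|-|\xi_0||\rangle\lesssim\langle\min(|\xi_1|,|\xi_2|)\rangle$ one correctly lands on the low-frequency factor $v$ (relabelling $u,v$ if necessary — but the asymmetric form of the statement already allows one of the two, and a convolution symmetry lets us assume $|\xi_2|\le|\xi_1|$). I expect no deep difficulty: the whole lemma is a pointwise Fourier-support computation using only the triangle inequality and monotonicity of $t\mapsto t^{-\beta}$, so the proof is short once the dispersive inequality $||\tau_0|-|\xi_0||\lesssim||\tau_1|-|\xi_1||+||\tau_2|-|\xi_2||+\min(|\xi_1|,|\xi_2|)$ is invoked.
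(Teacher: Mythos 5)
The paper itself offers no proof of this lemma --- it simply cites \cite{KS}, Lemma 8.10 --- so the comparison is with the standard Klainerman--Selberg argument, which is precisely the pointwise symbol comparison you set up: on $\{\tau_0=\tau_1+\tau_2,\ \xi_0=\xi_1+\xi_2\}$, using $\widehat u,\widehat v\ge 0$, one dominates the multiplier of the left side by that of the right side. Your framework is the right one, but both of your actual deductions contain errors. For the first line, the required pointwise bound is $\langle |\tau_0|-|\xi_0|\rangle^{-\beta}\lesssim \langle |\tau_0|-|\xi_0|\rangle^{-\alpha-\beta}\,\langle |\tau_1|+|\xi_1|\rangle^{\alpha}\,\langle |\tau_2|+|\xi_2|\rangle^{\alpha}$, i.e. $\langle |\tau_0|-|\xi_0|\rangle^{+\alpha}\lesssim \langle |\tau_1|+|\xi_1|\rangle^{\alpha}\langle |\tau_2|+|\xi_2|\rangle^{\alpha}$ --- with a \emph{positive} power $\alpha$ on the left, not $-\alpha$ as you wrote. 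Your ``a fortiori'' step runs backwards: from $\Lambda_-^{-\beta}(uv)\precsim \Lambda_-^{-\beta}(\Lambda_+^{\alpha}u\,\Lambda_+^{\alpha}v)$ together with $\langle\cdot\rangle^{-\alpha-\beta}\le\langle\cdot\rangle^{-\beta}$ you only learn that the target right-hand side is \emph{smaller} than the one you controlled, so nothing follows. The correct (still short) justification is $||\tau_0|-|\xi_0||\le |\tau_0|+|\xi_0|\le(|\tau_1|+|\xi_1|)+(|\tau_2|+|\xi_2|)$, whence $\langle |\tau_0|-|\xi_0|\rangle\lesssim\langle |\tau_1|+|\xi_1|\rangle\langle |\tau_2|+|\xi_2|\rangle$; this uses the triangle inequality, not merely ``each factor is $\ge 1$''.

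For the second line your case analysis does not close, and the tool you reach for is not needed. The pointwise claim is $\langle |\tau_0|-|\xi_0|\rangle^{-\beta}\lesssim \langle |\tau_0|-|\xi_0|\rangle^{-\alpha-\beta}\langle\xi_2\rangle^{\alpha}+\langle\xi_2\rangle^{-\beta}$, and this follows from the trivial dichotomy on the single comparison $\langle\xi_2\rangle\gtrless\langle |\tau_0|-|\xi_0|\rangle$: if $\langle\xi_2\rangle\ge\langle |\tau_0|-|\xi_0|\rangle$ the first summand already dominates the left side (raise to the power $\alpha\ge0$); if $\langle\xi_2\rangle\le\langle |\tau_0|-|\xi_0|\rangle$ the second does (raise to the power $-\beta\le0$). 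No hyperbolic inequality $||\tau_0|-|\xi_0||\lesssim ||\tau_1|-|\xi_1||+||\tau_2|-|\xi_2||+\min(|\xi_1|,|\xi_2|)$ enters, and no relabelling of $u,v$ is needed (nor is it available, since the statement is asymmetric). In your version: case (a), where $\langle |\tau_1|-|\xi_1|\rangle$ is large, matches neither term on the right-hand side; case (b) trades $\langle |\tau_2|-|\xi_2|\rangle^{\alpha}$ for $\langle\xi_2\rangle^{\alpha}$, which fails when $|\tau_2|\gg|\xi_2|$; and the low-modulation case asserts $uv\precsim u\,\Lambda^{-\beta}v+\Lambda^{-\beta}u\,v$, which is false when both spatial frequencies are large --- that region is in fact the one where the \emph{first} summand saves you, via $\langle |\tau_0|-|\xi_0|\rangle\lesssim\langle\xi_2\rangle$. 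So the lemma is true and elementary, but the argument as written does not prove it.
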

\begin{proof}
\cite{KS}, Lemma 8.10.
\end{proof}

Finally, we formulate the fundamental theorem which allows to reduce the local well-posedness for a system of nonlinear wave equations to suitable estimates for the nonlinearities. It is also essentially contained in  \cite{KS}. 

\begin{prop}
\label{Prop1.6}
Let $Du_0 \in H^{r-1}$ , $u_1 \in H^{r-1}$ , $v_0 \in H^s$ , $v_1 \in H^{s-1}$ be given. Assume that for $0 < T < 1$ :
\begin{align*}
\| \Lambda_+^{-1} \Lambda_-^{\epsilon-1}D_+ {\mathcal M}(u,\partial u,v , \partial v)\|_{F^{r-1}_T} & \le \omega_1(\|D_+ u\|_{F^{r-1}},\|v\|_{G^s}) \, , \\
\| \Lambda_+^{-1} \Lambda_-^{\epsilon-1} {\mathcal N}(u,\partial u,v , \partial v)\|_{G^s_T} & \le \omega_2(\|D_+u\|_{F^{r-1}},\|v\|_{G^s}) \, , 
\end{align*}
and
\begin{align*}
&\| \Lambda_+^{-1} \Lambda_-^{\epsilon-1} D_+({\mathcal M}(u,\partial u,v , \partial v) - {\mathcal M}(u',\partial u',\partial v'))\|_{F^{r-1}_T} \\
&+
\| \Lambda_+^{-1} \Lambda_-^{\epsilon-1} ({\mathcal N}(u,\partial u,v , \partial v) - {\mathcal N}(u',\partial u',v' , \partial v'))\|_{G^s_T} \\ &\le \omega(\|D_+u\|_{F^{r-1}},\|D_+u'\|_{F^{r-1}},\|v\|_{G^s}, \|v'\|_{G^s}) (\|D_+(u-u')\|_{F^{r-1}} + \|v-v'\|_{G^s}) \, , \\
\end{align*}
where $\omega,\omega_1,\omega_2$ are continuous functions with $\omega(0,0,0,0) = \omega_1(0,0) = \omega_2(0,0) = 0$.
Then the Cauchy problem
$$ \Box \, u = {\mathcal M}(u,\partial u,v,\partial v) \quad , \quad \Box \, v = {\mathcal N}(u,\partial u,v,\partial v) $$
with data
$$u(0) = u_0 \, , \, (\partial_t u)(0) = u_1 \, , \, v(0)= v_0 \, , \,(\partial_t v)(0) = v_1 $$
is locally well-posed, i.e. , there exists $T>0$ , such that there exists a unique solution $D_+u \in F^{r-1}_T$ , $v \in G^s_T$ .
\end{prop}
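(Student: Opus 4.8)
The plan is to derive the solution via Duhamel's formula and a contraction mapping argument in the spaces $F^{r-1}_T \times G^s_T$, exactly mirroring the scheme of Klainerman--Selberg \cite{KS}. First I would write the solution of the Cauchy problem $\Box u = {\mathcal M}$, $\Box v = {\mathcal N}$ with the prescribed data as
$$ u = u^{hom} + \Box^{-1}_R {\mathcal M}(u,\partial u,v,\partial v) \, , \quad v = v^{hom} + \Box^{-1}_R {\mathcal N}(u,\partial u,v,\partial v) \, , $$
where $u^{hom}, v^{hom}$ solve the homogeneous wave equation with the given data and $\Box^{-1}_R$ denotes the forward (Duhamel) parametrix truncated to $[0,T]$. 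The first step is therefore the linear estimates: one must check that the homogeneous part with data $Du_0, u_1 \in H^{r-1}$ lies in $F^{r-1}_T$ (and correspondingly $v_0 \in H^s, v_1 \in H^{s-1}$ gives $v^{hom} \in G^s_T$), and that the inhomogeneous solution operator maps $\Lambda_+^{-1}\Lambda_-^{\epsilon-1}D_+$ of a forcing term into $F^{r-1}_T$, respectively $\Lambda_+^{-1}\Lambda_-^{\epsilon-1}$ of a forcing into $G^s_T$, with a gain of a power of $T$. Both facts are in \cite{KS}: the energy inequality and the transfer principle handle the $H^{r-2,\half+\epsilon}$ and $H^{s-1,\half+\epsilon}$ components of the norms, while the ${\mathcal L}^1_t {\mathcal L}^\infty_x$ component of the $F^{r-1}$-norm is controlled by Corollary \ref{Cor.4.2} together with the Duhamel representation, using that $\half + \epsilon > \half$.

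The second step is to combine these linear estimates with the three hypotheses of the proposition. Set up the map $\Phi(u,v) = (u^{hom} + \Box^{-1}_R {\mathcal M}, v^{hom} + \Box^{-1}_R {\mathcal N})$ on the ball of radius $R$ (depending on the data norms) in $F^{r-1}_T \times G^s_T$. By the linear estimates there is a small power $T^\kappa$ in front of the Duhamel contributions; composing with the boundedness hypotheses $\|\Lambda_+^{-1}\Lambda_-^{\epsilon-1}D_+{\mathcal M}\|_{F^{r-1}_T} \le \omega_1$ and $\|\Lambda_+^{-1}\Lambda_-^{\epsilon-1}{\mathcal N}\|_{G^s_T} \le \omega_2$ shows that for $T$ small enough $\Phi$ maps the ball into itself, and composing with the Lipschitz-type hypothesis shows $\Phi$ is a contraction on that ball (shrinking $T$ further if needed, using continuity of $\omega$ and $\omega(0,0,0,0)=0$). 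The Banach fixed point theorem then produces the unique fixed point $(D_+u, v) \in F^{r-1}_T \times G^s_T$, which is the desired solution; uniqueness in the full space follows from the usual continuation/Gronwall argument combined with the contraction estimate on small time intervals.

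One technical point worth isolating is the relation between the natural Duhamel weight and the weights appearing in the hypotheses: the parametrix $\Box^{-1}_R$ gains essentially $\Lambda_-^{-1}$ in modulation and $\Lambda_+^{-1}$ (equivalently $D_+^{-1}$) in the elliptic variable, which is precisely why the nonlinearities enter the estimates pre-multiplied by $\Lambda_+^{-1}\Lambda_-^{\epsilon-1}D_+$ (the extra $D_+$ compensating for the $D_+$ that $\Box^{-1}_R$ does not supply, and the $\Lambda_-^{\epsilon-1}$ rather than $\Lambda_-^{-1}$ leaving room for the $T^\kappa$-gain from the time cutoff). I would verify this bookkeeping carefully against \cite{KS}, Section 5 or thereabouts.

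The main obstacle here is not really conceptual but organizational: the statement is essentially a repackaging of the Klainerman--Selberg abstract iteration framework for the particular solution spaces $F^{r-1}$, $G^s$ defined above, so the proof is mostly a matter of quoting \cite{KS} correctly and checking that the modification of their spaces (the extra $\Lambda_+^{\half+2\epsilon}\Lambda^{-2+3\epsilon}\Lambda_-^\half$ factor in the ${\mathcal L}^1_t{\mathcal L}^\infty_x$-part) does not break the linear mapping properties. The one place where genuine care is needed is ensuring the time-localization gain $T^\kappa$ survives in the ${\mathcal L}^1_t{\mathcal L}^\infty_x$ component of $F^{r-1}_T$, since ${\mathcal L}^1_t$ is an endpoint and Hölder in time against $T$ must be set up through the $\Lambda_-$-weight mismatch ($\epsilon - 1$ versus $-1$) rather than by brute Hölder; this is exactly the role of Lemma \ref{Lemma8.10} and the fact that we carry $\Lambda_-^{\epsilon-1}$ rather than $\Lambda_-^{-1}$, and I would present that estimate explicitly. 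Everything else — the energy inequality, the transfer principle, the Sobolev-type embeddings for the ${\mathcal L}^q_t{\mathcal L}^r_x$-spaces — is available off the shelf from the propositions already recalled above.
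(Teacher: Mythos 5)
Your proposal follows essentially the same route as the paper: the paper also simply invokes the Klainerman--Selberg contraction-mapping framework (their Theorems 5.4--5.5 and Propositions 5.6--5.7) and isolates, as the only genuinely new point, the linear mapping property for the ${\mathcal L}^1_t{\mathcal L}^\infty_x$ component of the modified $F^{r-1}$-norm — exactly the spot you flag. The paper settles that point by the chain $\|\Lambda_+^{\half+2\epsilon}\Lambda^{-2+3\epsilon}\Lambda_-^{\half}u\|_{{\mathcal L}^1_t{\mathcal L}^\infty_x}\lesssim\|{\mathcal F}(\Lambda_+\Lambda^{r-2}\Lambda_-^{1+}u)\|_{L^2_\xi L^\infty_\tau}$ via Corollary \ref{Cor.4.2} and the hypothesis $r>\frac{n}{2}-1$, which is the estimate you would need to write out to complete your sketch.
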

\begin{proof}
This is proved by the contraction mapping principle provided the solution space fulfills suitable assumptions. The case of a single equation $\Box \, u = {\mathcal M}(u,\partial u)$ and the solution space ${\mathcal X}^s$ for $s > \frac{n}{2}-1$, given by the norm
$\|u\|_{{\mathcal X}^s} = \|\Lambda_+ u\|_{H^{s-1,\half+\epsilon}} + \|\Lambda^{\gamma} \Lambda_-^{\half} u\|_{{\mathcal L}_t^1 {\mathcal L}_x^{2n}} $ , $\gamma > 0$ small , was proven by \cite{KS}, Theorems 5.4 and 5.5, Propositions 5.6 and 5.7. Our case is a straightforward modification of their results, thus we omit the proof.
We just remark that the only modification in the case of our solution space is the following estimate in the proof of \cite{KS}, Prop. 5.6:
\begin{align*}
\| \Lambda_+^{\half+2\epsilon} \Lambda^{-2 +3\epsilon} \Lambda_-^{\half} u \|_{{\mathcal L}^1_t {\mathcal L}^{\infty}_x} & \lesssim \| \Lambda_+^{\half+2\epsilon} \Lambda^{\frac{n}{2}-\frac{5}{2}+} \Lambda_-^{1+}  u \|_{{\mathcal L}^{1}_t {\mathcal L}^{2}_x} \\
& \lesssim \| {\mathcal F} ( \Lambda_+^{\half +2\epsilon} \Lambda^{\frac{n}{2}-\frac{5}{2}+} \Lambda_-^{1+} u )\|_{L^2_{\xi} L^{\infty}_{\tau}} \\
& \lesssim \| {\mathcal F} ( \Lambda_+ \Lambda^{r-2} \Lambda_-^{1+} u )\|_{L^2_{\xi} L^{\infty}_{\tau}} \, .
\end{align*}
The first estimate follows from Corollary \ref{Cor.4.2}, and the last estimate holds by our assumption $ r > \frac{n}{2}-1$ .
\end{proof}

\section{Proof of Theorem \ref{Theorem2}}

An inspection of the nonlinear terms in (\ref{16'}) and (\ref{17'}) - using (\ref{***}) and (\ref{****}) and noting the fact that the Riesz transforms 
$R_i$ are bounded in the spaces involved - reduces the estimates in Proposition \ref{Prop1.6} 
to proving (we remark, that due to the multilinear character of the nonlinearity the estimates for the difference can be treated exactly like the other estimates): \\
The estimate for the null forms $Q \in \{Q_{0i},Q_{ij}\}$ :
\begin{equation}
  \label{29}
  \| \Lambda_+^{-1} \Lambda_-^{\epsilon -1} Q(D^{-1}A, \phi)\|_{G^s_T } \lesssim \|D_+A\|_{F^{r-1}} \|\phi\|_{G^s}\, ,
\end{equation}
the following estimate for the other quadratic term
\begin{align}
  \label{35} 
  \| \Lambda_+^{-1} \Lambda_-^{\epsilon -1} \partial_{\mu} (\phi \partial_{\nu} \psi) \|_{F^{r-1}_T}
     &\lesssim \|\phi\|_{G^s} \|\partial_{\nu} \psi\|_{G^{s-1}}
     \end{align}
and the following estimates for the cubic terms:
\begin{align}
\label{39}
\| \Lambda_+^{-1} \Lambda_-^{\epsilon-1} \partial_{\mu} (A \phi \psi)\|_{F^{r-1}_T} & \lesssim \|D_+A\|_{F^{r-1}} \|\phi\|_{G^s} \|\psi\|_{G^s}\, ,\\
\label{40}
\| \Lambda_+^{-1} \Lambda_-^{\epsilon-1}  (A_{\mu} A_{\nu} \phi)\|_{G^s_T} & \lesssim \|D_+A_{\mu}\|_{F^{r-1}}\|D_+A_{\nu}\|_{F^{r-1}}\|\phi\|_{G^s} \, .
\end{align}
Here $ 0 < T < 1 $ . \\
\noindent
{\bf Important remarks:} 1. We assume in the following that the Fourier transforms of $u$ and $v$ are nonnegative. This means no loss of the generality, because the norms involved in the desired estimates do only depend on the size of the Fourier transforms. \\
2. In all those estimates which contain only $H^{s,b}$-norms we may replace $Du$ by $\Lambda u$ by \cite{T}, Cor. 8.2. \\
{\bf Proof of (\ref{29}):} By (\ref{Q1}) we reduce to the following estimates
$$
\|D^{-1} D_+^{\half+2\epsilon}u D_+^{\half+2\epsilon}v\|_{H^{s-\half-2\epsilon,0}}  \lesssim \|D_+u\|_{H^{r-1,\half+\epsilon}} \|\Lambda_+v\|_{H^{s-1,\half+\epsilon}} \, ,
$$
which follows from
\begin{align}
\label{29.1}
\|uv\|_{H^{s-\half-2\epsilon,0}} & \lesssim \|D^{\frac{3}{2}-2\epsilon}u\|_{H^{r-1,\half+\epsilon}} \|v\|_{H^{s-\half-2\epsilon,\half+\epsilon}} \, ,
\end{align}
and
\begin{align}
\label{29.2}
\|uv\|_{H^{s-\half-2\epsilon,-\half+2\epsilon}} & \lesssim \|D_+^{\half-2\epsilon} \Lambda_-^{-\half+2\epsilon} Du\|_{F^{r-1}} \|v\|_{H^{s-\half-2\epsilon,\half+\epsilon}} \\
\label{29.3}
\|uv\|_{H^{s-\half-2\epsilon,-\half+2\epsilon}} & \lesssim \|D^{\frac{3}{2}-2\epsilon}u\|_{H^{r-1,\half+\epsilon}} \|v\|_{H^{s-\half-2\epsilon,0}} \, .
\end{align}
(\ref{29.1}): We remark that $D^{\frac{3}{2}-2\epsilon}$ may be replaced by $\Lambda^{\frac{3}{2}-2\epsilon}$ by \cite{T}, Cor. 8.2. It then follows from Prop. \ref{Prop.3.6} with parameters $s_0=\half-s+2\epsilon$, $s_1=r+\half-2\epsilon$ , $s_2=s-\half-2\epsilon$, so that $s_0+s_1+s_2 = r + \half -2 \epsilon > \frac{n-1}{2}$ and $s_1+s_2 = r+s-4\epsilon > n-2 \ge 2$ for $r,s > \frac{n}{2}-1$ and $n \ge 4$ . \\
(\ref{29.3}) follows similarly with $s_0=s-\half-2\epsilon$ , $s_1= \half-s+2\epsilon$ , $s_2 =r+\half-2\epsilon$ , so that $s_1+s_2 = r-s+1 \ge \half$ under the assumption $r \ge s- \half$ . \\
(\ref{29.2}) is handled as follows. By the fractional Leibniz rule we reduce to two estimates. \\
The first estimate is
$$ \| (\Lambda^{s-\half-2\epsilon} u) v\|_{H^{0,-\half+2\epsilon}} \lesssim \|D^{\frac{3}{2}-2\epsilon}u\|_{H^{r-1,0}} \|v\|_{H^{s-\half-2\epsilon,\half + \epsilon}} \, , $$
which follows from Prop. \ref{Prop.3.6} with $s_0= r-s+1$ , $s_1= 0$, $s_2= s-\half-2\epsilon$ , so that $s_0+s_1+s_2 = r + \half-2\epsilon > \frac{n-1}{2}$ and $s_1+s_2=s-\half-2\epsilon > \frac{n-3}{2} \ge \half$ for $n \ge 4$.\\
The second estimate is
$$ \|u \Lambda^{s-\half-2\epsilon}v\|_{H^{0,-\half+2\epsilon}} \lesssim \|D_+^{\half-2\epsilon} \Lambda_-^{-\half+2\epsilon} Du\|_{F^{r-1}} \|v\|_{H^{s-\half-2\epsilon,\half+\epsilon}} \, . $$
We obtain by Lemma \ref{Lemma8.10}:
\begin{align*}
&\|u \Lambda^{s-\half-2\epsilon} v\|_{H^{0,-\half+2\epsilon}} \\
 &\quad \quad\lesssim \| \Lambda_-^{-\half-\frac{\epsilon}{2}} (\Lambda^{\frac{5}{2}\epsilon} u \Lambda^{s-\half-2\epsilon}v)\|_{H^{0,0}} + \| \Lambda^{-\half+2\epsilon} u \Lambda^{s-\half-2\epsilon} v\|_{H^{0,0}} \, .
\end{align*}
The last term is easily estimated by Prop. \ref{Prop.3.6} under the assumption $ r > \frac{n}{2} -1 $ :
$$ \| \Lambda^{-\half+2\epsilon} u \Lambda^{s-\half-2\epsilon} v\|_{H^{0,0}} \lesssim \|Du\|_{H^{r-\half-2\epsilon,0}} \|v\|_{H^{s-\half-2\epsilon,\half+\epsilon}} \,. $$
For the  first term we obtain:
\begin{align*}
\| \Lambda^{\frac{5}{2}\epsilon} u \Lambda^{s-\half-2\epsilon} v\|_{H^{0,-\half-\frac{\epsilon}{2}}} & \lesssim \|\Lambda^{\frac{5}{2}\epsilon} u \Lambda^{s-\half-2\epsilon} v\|_{{\mathcal L}^1_t {\mathcal L}_x^2} \\
& \lesssim \|\Lambda^{\frac{5}{2}\epsilon} u\|_{{\mathcal L}^1_t {\mathcal L}_x^{\infty}} \| \Lambda^{s-\half-2\epsilon} v\|_{{\mathcal L}^{\infty}_t {\mathcal L}_x^2} 
\end{align*}
If $u$ has large frequencies we estimate
\begin{align*}
\|\Lambda^{\frac{5}{2}\epsilon} u\|_{{\mathcal L}^1_t {\mathcal L}_x^{\infty}} &\lesssim \|\Lambda^{\frac{5}{2}\epsilon}\Lambda^{-\frac{3}{2}+2\epsilon}  D_+^{\half-2\epsilon} D u\|_{{\mathcal L}^1_t {\mathcal L}_x^{\infty}} \\
&\lesssim \|\Lambda_+^{\half+2\epsilon} \Lambda^{-2+3\epsilon} \Lambda_-^{\half} (D_+^{\half-2\epsilon} \Lambda_-^{-\half+2\epsilon} D u)\|_{{\mathcal L}^1_t {\mathcal L}_x^{\infty}} \\
 &= \| D_+^{\half-2\epsilon} \Lambda_-^{-\half+2\epsilon} Du\|_{F^{r-1}}
\end{align*}
and
$$\| \Lambda^{s-\half-2\epsilon} v\|_{{\mathcal L}^{\infty}_t {\mathcal L}_x^2} \lesssim \|v\|_{H^{s-\half-2\epsilon,\half+\epsilon}} \, .$$
If $u$ has low frequencies we crudely estimate
\begin{align*}
\| \Lambda^{\frac{5}{2}\epsilon} u \Lambda^{s-\half-2\epsilon} v\|_{H^{0,-\half-\frac{\epsilon}{2}}} & \lesssim \| \Lambda^{\frac{5}{2}\epsilon} u \Lambda^{s-\half-2\epsilon} v\|_{L^2_t L^2_x} \\
& \lesssim \| \Lambda^{\frac{5}{2}\epsilon} u\|_{L^2_t L^{\infty}_x} \| \Lambda^{s-\half-2\epsilon} v\|_{L^{\infty}_t L^2_x} \\
& \lesssim \| D^{\frac{n}{2}-\epsilon}  u\|_{L^2_t L^2_x} \|v\|_{H^{s-\half-2\epsilon,\half+\epsilon}} \\
& \lesssim \|Du\|_{L^2_t L^2_x} \|v\|_{H^{s-\half-2\epsilon,\half+\epsilon}} \\
& \lesssim \|Du\|_{H^{r-\half-2\epsilon,0}} \|v\|_{H^{s-\half-2\epsilon,\half+\epsilon}} \, .
\end{align*}
This completes the proof of (\ref{29}). \\
[0.5em]
{\bf Proof of (\ref{35}):}
{\bf A.} We start with the estimate for the first part of the $F^{r-1}$-norm. It suffices to prove
$$ \|uv\|_{H^{r-1,-\half+2\epsilon}} \lesssim \|u\|_{H^{s,\half+\epsilon}} \|v\|_{H^{s-1,\half+\epsilon}} \, . $$
By the fractional Leibniz rule this reduces to the following estimates:
\begin{align}
\label{35.1}
\|uv\|_{H^{0,-\half+2\epsilon}} &
\lesssim \|u\|_{H^{s-r+1,\half+\epsilon}} \|v\|_{H^{s-1,\half+\epsilon}} \, , \\
\label{35.2}
\|uv\|_{H^{0,-\half+2\epsilon}} &\lesssim \|u\|_{H^{s,\half+\epsilon}} \|v\|_{H^{s-r,\half+\epsilon}} \, .
\end{align}
If $2s-r > \frac{n}{2}-\half$ both estimates follow from Prop. \ref{Prop.3.6} even with left hand side $\|uv\|_{H^{0,0}}$. \\
If $2s-r \le \frac{n}{2}-\half$ we apply Prop. \ref{LV} with parameters $\alpha_1= s-r+1$ , $\alpha_2=s-1$ for (\ref{35.1}) and with $\alpha_1 =s$ , $\alpha_2=s-r$ for (\ref{35.2}), where we used that the left hand side is majorized by $\|uv\|_{L^q_t L^2_x}$ for any $1<q \le 2$. We have to show the conditions: \\
(\ref{LV2}): The condition $\alpha_1+\alpha_2 = 2s-r = \frac{n}{2}-\frac{1}{q}$ can be fulfilled with some $1<q\le 2$,  if $\frac{n}{2}-\half \ge 2s-r > \frac{n}{2}-1$ . Using our assumptions   $3s-2r >\frac{n}{2}-\half$ and $r \ge s-\half$ this holds, because $2s-r = (3s-2r) + (r-s) > \frac{n}{2}-1$ . \\
(\ref{LV12}): The decisive condition is $s < \frac{n}{2}+\half-\frac{2}{q} = \frac{n}{2}+\half+4s-2r-n= 4s-2r - \frac{n}{2}+\half,$ which is equivalent to our assumption $3s-2r>\frac{n}{2}-\half$ . \\
{\bf B.} Next we consider the second part of the $F^{r-1}$-norm. It suffices to prove
$$ \| \Lambda_+^{-\half+2\epsilon} \partial_{\nu} \Lambda^{-2+3\epsilon} \Lambda_-^{-\half+\epsilon} (uv)\|_{{\mathcal L}^1_t[0,T] {\mathcal L}^{\infty}_x} \lesssim \|\Lambda_+ u\|_{H^{s-1,\half+\epsilon}} \|\Lambda_+v\|_{H^{s-2,\half+\epsilon}} \, . $$
We use $\Lambda_-^{2\epsilon}u \precsim \Lambda^{2\epsilon}u$ and Cor. \ref{Cor.4.6},  so that the left hand side is bounded by
\begin{align*}
 \| \partial_{\nu}  \Lambda^{-\frac{5}{2}+7\epsilon} \Lambda_-^{-\half-\epsilon} (uv)\|_{{\mathcal L}^1_t[0,T] {\mathcal L}^{\infty }_x} & \lesssim  \| \partial_{\nu}  \Lambda^{\frac{n}{2}-3+8\epsilon}  (uv)\|_{{\mathcal L}^1_t[0,T] {\mathcal L}^2_x}  \\
& \lesssim \|  \partial_{\nu} \Lambda^{\frac{n}{2}-3+8\epsilon}  (uv)\|_{ L^1_t[0,T] L^2_x}\, ,
\end{align*}
where we used Cor. \ref{Cor.4.2}.  \\
Case a: In the case of high frequencies of the product $uv$ it suffices to prove for $T \le 1$ :
\begin{align*}
&\|  D^{\frac{n}{2}-3+8\epsilon}  ((\partial_{\nu}u)v)\|_{ L^1_t[0,T] L^2_x} +  \|  D^{\frac{n}{2}-3+8\epsilon}  (u(\partial_{\nu}v))\|_{ L^1_t[0,T] L^2_x}\\
& \quad \quad \quad \quad \quad\lesssim \|u\|_{H^{s,\half+\epsilon}} \|v\|_{H^{s-1,\half+\epsilon}} \, . 
\end{align*}
The first term is estimated by H\"older in time replacing the  $L^1_t[0,T] L^2_x$-norm by the $L^q_t L^2_x$-norm for an arbitrary $ 1 \le q \le 2$ first and then applying Prop. \ref{LV} with parameters $\alpha_1 = \alpha_2 = \frac{n}{2}-\frac{11}{6} +5\epsilon$ and $\beta_0 = \frac{n}{2}-3+8\epsilon$ . We check the necessary conditions: \\
(\ref{LV2}): $\frac{n}{2}-\alpha_1-\alpha_2 +\beta_0 =  \frac{2}{3} -2\epsilon =:\frac{1}{q}$. \\
(\ref{LV5}): $\beta_0 = \frac{n}{2}-3+8\epsilon > \frac{2}{q} - \frac{n+1}{2} = \frac{4}{3}-\frac{n+1}{2} \, \Leftrightarrow n > \frac{23}{6}-8\epsilon$ , which is true for $n \ge 4$ . \\
(\ref{LV12}): The estimate $\frac{n}{2}-\frac{11}{6} + 5\epsilon < \frac{n}{2}+\half - \frac{2}{q} = \frac{n}{2}-\frac{5}{6}+4\epsilon$ is fulfilled. \\
For the second term  we argue similarly in the case of high frequencies of $v$ , namely choosing in Prop. \ref{LV} the parameters $\alpha_1 = \frac{n}{2}-\frac{5}{6}$ , $\alpha_2= \frac{n}{2} - \frac{17}{6}+10\epsilon$,  $\beta_0 = \frac{n}{2}-3+8\epsilon$ (here $\alpha_2$ is negative for $ n \le 5 $ , therefore the high frequency assumption for $v$). The conditions (\ref{LV2}) and (\ref{LV5}) are checked as before. \\
(\ref{LV12}): The decisive condition is $\alpha_1 < \frac{n}{2} - \frac{5}{6}+4\epsilon$ , which is fulfilled. \\
In the case of low frequencies of $v$ we estimate
\begin{align*}
 \|  \Lambda^{\frac{n}{2}-3+8\epsilon}  (u(\partial_{\nu}v))\|_{ L^1 L^2_x} & \lesssim \|  (\Lambda^{\frac{n}{2}-3+8\epsilon} u) \partial_{\nu}v\|_{ L^1 L^2_x} \\
& \lesssim\| \Lambda^{\frac{n}{2}-3+8\epsilon} u \|_{L^2_t L^2_x} \|\partial_{\nu}v\|_{ L^2 L^{\infty}_x} \\
& \lesssim \|\Lambda_+ u\|_{H^{s-1,\half+\epsilon}} \|\Lambda_+ v\|_{L^2_t H^{\frac{n}{2}+}_x} \\
& \lesssim  \|\Lambda_+ u\|_{H^{s-1,\half+\epsilon}} \|\Lambda_+ v\|_{H^{s-2,\half+\epsilon }} \, .
\end{align*}
Case b: If the frequencies of $uv$ are low, we easily obtain for arbitrarily large $N$ by Prop. \ref{SML} :
\begin{align*}
 \|  \partial_{\nu} \Lambda^{\frac{n}{2}-3+8\epsilon}  (uv)\|_{ L^1_t L^2_x} & \lesssim
\|\partial_{\nu} (uv)\|_{L^1_t H^{-N}_x} \\
& \lesssim \|(\partial_{\nu} u)v\|_{L^1_t H^{-N}_x} + \|u(\partial_{\nu}v)\|_{L^1_t H^{-N}_x} \\
& \lesssim \|D_+ u\|_{L^2_t H^{s-1}_x} \|v\|_{L^2_t H^{s-1}_x} + \|u\|_{L^2_t H^s_x} \|D_+v\|_{L^2_t H^{s-2}_x} \\
& \lesssim \|\Lambda_+ u\|_{H^{s-1,\half+\epsilon}} \|\Lambda_+ v\|_{H^{s-2,\half+\epsilon}} \, .
\end{align*}
This completes the proof of (\ref{35}). \\[0.3em]

It remains to consider the cubic nonlinearities.\\
{\bf Proof of (\ref{40}):} The following estimate holds:
$$ \|A_{\mu} A_{\nu} \phi \|_{H^{s-1,-\half+2\epsilon}} \lesssim \|DA_{\mu}\|_{H^{r-1,\half+\epsilon}} \|DA_{\nu}\|_{H^{r-1,\half+\epsilon}} \|\phi\|_{H^{s,\half+\epsilon}} $$
a. If $s > \frac{n}{2} - \half$ we apply Prop. \ref{Prop.3.6} and Cor. \ref{Cor.3.1} twice, where we remark again, that we may replace $D$ by $\Lambda$, to obtain
\begin{align*}
\|A_{\mu} A_{\nu} \phi \|_{H^{s-1,-\half+2\epsilon}} & \lesssim \|D(A_{\mu} A_{\nu})\|_{H^{s-2,0}} \|\phi\|_{H^{s,\half+\epsilon}} \\
& \lesssim (\|(DA_{\mu}) A_{\nu}\|_{H^{s-2,0}} + \|A_{\mu} (DA_{\nu})\|_{H^{s-2,0}}) \|\phi\|_{H^{s,\half+\epsilon}} \\
& \lesssim \|DA_{\mu}\|_{H^{r-1,\half+\epsilon}} \| DA_{\nu}\|_{H^{r-1,\half+\epsilon}} \|\phi\|_{H^{s,\half+\epsilon}} \, ,
\end{align*}
where we choose for the first step $s_0=s-1$ , $s_1 = 1-s$ , $s_2=s$ , so that $s_0+s_1+s_2= s > \frac{n}{2}-\half$ by assumption and $s_1+s_2=1$ . For the last step the choice $s_0=2-s$ , $s_1=r-1$ , $s_2=r$ gives $s_0+s_1+s_2 = 2r-s+1 > \frac{n-1}{2}$ by our assumption and also $s_1+s_2 = 2r-1 >n-3 \ge 1$ . \\
b. If $s \le \frac{n}{2}-\half$ we obtain similarly
\begin{align*}
\|A_{\mu} A_{\nu} \phi \|_{H^{s-1,-\half+2\epsilon}} & \lesssim \|D(A_{\mu} A_{\nu})\|_{H^{\frac{n}{2}-\frac{5}{2}+,0}} \|\phi\|_{H^{s,\half+\epsilon}} \\
& \lesssim (\|(DA_{\mu}) A_{\nu}\|_{H^{\frac{n}{2}-\frac{5}{2}+,0}} + \|A_{\mu} (DA_{\nu})\|_{H^{\frac{n}{2}-\frac{5}{2}+,0}})  \|\phi\|_{H^{s,\half+\epsilon}} \\
& \lesssim \|DA_{\mu}\|_{H^{r-1,\half+\epsilon}} \| DA_{\nu}\|_{H^{r-1,\half+\epsilon}} \|\phi\|_{H^{s,\half+\epsilon}} \, .
\end{align*}
For the first step we choose $s_0=\frac{n}{2}-\frac{3}{2}+$ , $s_1= s$ , $s_2=1-s$ , so that $s_0+s_1+s_2 = \frac{n}{2}-\half + $ and $s_1+s_2=1$ , whereas for the last step $s_0=\frac{5}{2}-\frac{n}{2}-$ , $s_1=r-1$ , $s_2=r$,  so that $s_0+s_1+s_2 = 2r + \frac{3}{2}-\frac{n}{2}- > \frac{n}{2}-\half $ for $r > \frac{n}{2}-1$ , and $s_1+s_2 = 2r-1 > 1$.  \\[0.5em]
{\bf Proof of (\ref{39}):} {\bf A.} We start with the first part of the $F^{r-1}$-norm. It suffices to show
$$ \| A \phi \psi\|_{H^{r-1,-\half+2\epsilon}} \lesssim \|DA\|_{H^{r-1,\half + \epsilon}} \|\phi\|_{H^{s,\half+\epsilon}} \|\psi\|_{H^{s,\half+\epsilon}} $$
We use Prop. \ref{Prop.3.6} and Cor. \ref{Cor.3.1} and obtain
\begin{align*}
\|A \phi \psi\|_{H^{r-1,-\half+2\epsilon}} &\lesssim \|DA\|_{H^{r-1,\half+\epsilon}} \|\phi \psi\|_{H^{\frac{n}{2}-\frac{3}{2}+,0}} \\
&\lesssim \|DA\|_{H^{r-1,\half+\epsilon}} \|\phi\|_{H^{s,\half+\epsilon}} \|\psi\|_{H^{s,\half+\epsilon}} \, , 
\end{align*}
where for the first step $s_0=\frac{n}{2}-\frac{3}{2}+$ , $s_1=r$ , $s_2= 1-r$ , so that $s_0+s_1+s_2 = \frac{n}{2}-\half+$ and $s_1+s_2=1$, and for the second step $s_0=\frac{3}{2}
-\frac{n}{2}-$ , $s_1=s_2=s$ , so that $s_0+s_1+s_2 = \frac{3}{2}-\frac{n}{2}+2s- > \frac{n}{2}-\half$ and $s_1+s_2=2s > n-2 \ge 2$ by our assumption $s > \frac{n}{2}-1$ . \\
{\bf B.} Next we estimate the second part of the $F^r$-norm.
It suffices to prove
\begin{align*}
\| & \Lambda_+^{-\half+2\epsilon} \Lambda^{-2+3\epsilon} \Lambda_-^{-\half+\epsilon} \partial_{\nu}(uvw)\|_{{\mathcal L}^1_t {\mathcal L}^{\infty}_x}\\
& \quad \quad\lesssim \|D_+u\|_{H^{r-1,\half+\epsilon}} \|\Lambda_+ v\|_{H^{s-1,\half+\epsilon}}  \|\Lambda_+ w\|_{H^{s-1,\half+\epsilon}} \, . 
\end{align*}
We use $\Lambda_-^{2\epsilon}u \precsim \Lambda^{2\epsilon}u$ and Cor. \ref{Cor.4.2}, so that the left hand side is bounded by
\begin{align*}
 &\|  \Lambda^{-\frac{5}{2}+7\epsilon} \Lambda_-^{-\half-\epsilon} \partial_{\nu}(uvw)\|_{{\mathcal L}^1_t {\mathcal L}^{\infty }_x} \lesssim  \|  \Lambda^{\frac{n}{2}-3+8\epsilon} \partial_{\nu} (uvw)\|_{{\mathcal L}^1_t {\mathcal L}^2_x}  \, .
\end{align*}
1. Let us first consider the case $n \ge 6$. By the fractional Leibniz rule we have to consider the following terms (by symmetry in $v$ and $w$). \\
a. We obtain
\begin{align*}
 \| (\Lambda^{\frac{n}{2}-3+8\epsilon} \partial_{\nu} u)vw\|_{L^1_t L^2_x} & \lesssim \|  \Lambda^{\frac{n}{2}-3+8\epsilon} \partial_{\nu} u\|_{L^{\infty}_t L^{\frac{2n}{n-2}}_x} \|v\|_{L^2_t L^{2n}_x} \|w\|_{L^2_t L^{2n}_x} \\
& \lesssim \|\Lambda^{\frac{n}{2}-2+8\epsilon} \partial_{\nu}u\|_{L^{\infty}_t L^2_x } \|v\|_{H^{\frac{n}{2}-1,\half+\epsilon}} \|w\|_{H^{\frac{n}{2}-1,\half+\epsilon}} \\
& \lesssim \|D_+u\|_{H^{r-1,\half+\epsilon}} \|v\|_{H^{s,\half+\epsilon}}  \|w\|_{H^{s,\half+\epsilon}} \, ,
\end{align*}
where we used H\"older, Sobolev, Prop. \ref{Str} and the assumptions $r,s > \frac{n}{2}-1$ . \\
b. Similarly we obtain
\begin{align*}
 \|(\partial_{\nu}u) (\Lambda^{\frac{n}{2}-3+8\epsilon}v)w\|_{L^1_t L^2_x} & \lesssim  \|\partial_{\nu} u\|_{L^{\infty}_t L^{\frac{n}{2}}_x}| \|\Lambda^{\frac{n}{2}-3+8\epsilon} v\|_{L^2_t L^{\frac{2n}{n-5}}_x} \|w\|_{L^2_t L^{2n}_x}\\
& \lesssim \|D_+ u\|_{L^{\infty}_t H^{\frac{n}{2}-2}_x} \|\Lambda^{\frac{n}{2}-3+8\epsilon} v\|_{H^{2,\half+\epsilon}} \|w\|_{H^{\frac{n}{2}-1,\half+\epsilon}} \\
& \lesssim \|D_+ u\|_{H^{r-1,\half+\epsilon}} \|v\|_{H^{s,\half+\epsilon}}  \|w\|_{H^{s,\half+\epsilon}} \, .
\end{align*}
c. Moreover
\begin{align*}
 \|(\Lambda^{\frac{n}{2}-3+8\epsilon}u)(\partial_{\nu}v) w\|_{L^1_t L^2_x} & \lesssim | \|\Lambda^{\frac{n}{2}-3+8\epsilon} u\|_{L^{\infty}_t L^{\frac{2n}{n-4}}_x}  \| \partial_{\nu} v\|_{L^2_t L^{\frac{2n}{3}}_x}\|w\|_{L^2_t L^{2n}_x}\\
& \lesssim \|D_+ u\|_{L^{\infty}_t H^{\frac{n}{2}-2+8\epsilon}_x} \|\partial_{\nu} v\|_{H^{\frac{n}{2}-2,\half+\epsilon}} \|w\|_{H^{\frac{n}{2}-1,\half+\epsilon}} \\
& \lesssim \|D_+ u\|_{H^{r-1,\half+\epsilon}} \|\Lambda_+v\|_{H^{s-1,\half+\epsilon}} \|w\|_{H^{\frac{n}{2}-1,\half+\epsilon}} \\
& \lesssim \|D_+ u\|_{H^{r-1,\half+\epsilon}} \|\Lambda_+v\|_{H^{s-1,\half+\epsilon}}  \|w\|_{H^{s,\half+\epsilon}} \, .
\end{align*}
d. Next we obtain
\begin{align*}
 \|u(\Lambda^{\frac{n}{2}-3+8\epsilon} \partial_{\nu}v) w\|_{L^1_t L^2_x} & \lesssim \|u\|_{L^2_t L^{2n}_x} \|\Lambda^{\frac{n}{2}-3+8\epsilon}\partial_{\nu} v\|_{L^{\infty}_t L^{\frac{2n}{n-2}}_x} \|w\|_{L^2_t L^{2n}_x}\\
& \lesssim \|D_+ u\|_{H^{\frac{n}{2}-2,\half+\epsilon}} \| \Lambda_+ v\|_{H^{\frac{n}{2}-2+6\epsilon,\half+\epsilon}} \|w\|_{H^{\frac{n}{2}-1,\half+\epsilon}} \\
& \lesssim \|D_+ u\|_{H^{r-1,\half+\epsilon}} \|v\|_{H^{s,\half+\epsilon}}  \|w\|_{H^{s,\half+\epsilon}} \, .
\end{align*}
e. Finally
\begin{align*}
 \|u( \partial_{\nu}v) (\Lambda^{\frac{n}{2}-3+8\epsilon}w)\|_{L^1_t L^2_x} & \lesssim \|u\|_{L^2_t L^{2n}_x} \|\partial_{\nu} v\|_{L^2_t L^{\frac{2n}{3}}_x} \|\Lambda^{\frac{n}{2}-3+8\epsilon} w\|_{L^{\infty}_t L^{\frac{2n}{n-4}}_x}\\
& \lesssim \| u\|_{H^{\frac{n}{2}-1,\half+\epsilon}} \|D_+v\|_{H^{\frac{n}{2}-2,\half+\epsilon}} \|\Lambda^{\frac{n}{2}-1+8\epsilon} w\|_{L^{\infty}_t L^2_x} \\
& \lesssim \|u\|_{H^{r,\half+\epsilon}} \|\Lambda_+ v\|_{H^{s-1,\half+\epsilon}}  \|w\|_{H^{s,\half+\epsilon}} \, .
\end{align*}
2. Now we consider the case $4 \le n \le 5$ . We have to estimate
\begin{align*}
 &\| \Lambda^{\frac{n}{2}-3+8\epsilon} \partial_{\nu}(uvw)\|_{L^1_t L^2_x} \lesssim \|\partial_{\nu}(uvw)\|_{L^1_t L^{\frac{n}{3-8\epsilon}}} \, ,
\end{align*}
where we used Sobolev. By symmetry in $v$ and $w$ we only have to estimate the following two terms. By Sobolev and Strichartz we obtain
\begin{align*}
\|(\partial_{\nu}u)vw)\|_{L^1_t L^{\frac{n}{3-8\epsilon}}}&  \lesssim \|\partial_{\nu} u\|_{L^{\infty}_t L^{\frac{n}{2-8\epsilon}}} \|v\|_{L^2_t L^{2n}_x} \|w\|_{L^2_t L^{2n}_x} \\
& \lesssim \|D_+ u\|_{L^{\infty}_t H^{r-1}} \|v\|_{H^{{\frac{n}{2}-1,\half+\epsilon}}} \|w\|_{H^{\frac{n}{2}-1,\half+\epsilon}} \\
& \lesssim \|D_+ u\|_{H^{r-1,\half+\epsilon}} \|v\|_{H^{s\half+\epsilon}} \|w\|_{H^{s\half+\epsilon}} 
\end{align*}
and
\begin{align*}
\|u(\partial_{\mu}v)w)\|_{L^1_t L^{\frac{n}{3-8\epsilon}}}&  \lesssim  \|u\|_{L^2_t L^{2n}_x} \|\partial_{\mu} v\|_{L^{\infty}_t L^{\frac{n}{2-8\epsilon}}} \|w\|_{L^2_t L^{2n}_x} \\
& \lesssim \|D_+ u\|_{H^{r-1,\half+\epsilon}} \|\Lambda_+v\|_{H^{s-1,\half+\epsilon}} \|w\|_{H^{s\half+\epsilon}} \, ,
\end{align*}
where we remark that  Prop. \ref{Str} remains applicable for homogeneous spaces $\dot{H}^{s,b}$ instead of $H^{s,b}$ .

The proof of Theorem \ref{Theorem2} is now complete.

\section{Proof of Theorem \ref{Theorem1}}
Let $(A,\phi)$ be the solution  of (\ref{16'}),(\ref{17'}) given by Theorem \ref{Theorem2}. Then it is also a solution of (\ref{16}),({\ref{17}) provided the Lorenz condition $\partial^{\mu} A_{\mu} =0$ is satisfied (cf. the remark after Theorem \ref{Theorem2}), what we now prove. We define
$$
u  := \partial^{\mu} A_{\mu} = -\partial_t A_0 + \partial^j A_j \, . $$
By (\ref{16'}) and (\ref{17'}) we obtain
\begin{align*}
\square \, u &= -\partial_t \square \, A_0 + \partial^j \square \, A_{j} \\
&= \partial_t(Im(\phi \overline{\partial_t\phi}) +A_0|\phi|^2)-\partial^j(Im(\phi\overline{\partial_j \phi}) - A_j |\phi|^2) \\
&= Im(\phi \overline{\partial_t^2 \phi}) - Im(\phi \overline{\partial^j \partial_j \phi}) - \partial^{\mu}(A_{\mu} |\phi|^2) \\
&= Im(\phi \overline{\square \, \phi}) -   \partial^{\mu}(A_{\mu} |\phi|^2)) \\
&= Im(\phi \overline{{\tilde M}(A,\phi)}) -\partial^{\mu}(A_{\mu} |\phi|^2)) \\
&=: R(A,\phi)
\end{align*}
By (\ref{17'}) we have
$$
\tilde{M}(A,\phi)  = -2i (A_0 \partial_t + D^{-2}\nabla \partial_t A_0 \cdot \nabla \phi - A^{df} \cdot \nabla \phi )+ A_{\mu} A^{\mu} \phi + m^2 \phi \, .$$
Now by the definition of $A^{df}$ :
\begin{align*}
 A^{df} \cdot \nabla \phi =R^k(R_j A_k - R_k A_j) \partial^j \phi &= D^{-2} \partial^k(\partial_j A_k - \partial_k A_j) \partial^j \phi \\
 &= D^{-2} \nabla(\partial^k A_k)\cdot \nabla \phi + A_j \partial^j \phi 
 \end{align*}
 and by the definition of $u$
$$ D^{-2} \nabla \partial_t A_0 \cdot \nabla \phi = D^{-2} \nabla(\partial^j A_j -u)\cdot \nabla \phi \, , $$
so that we obtain
\begin{align*}
\tilde{M}(A,\phi) & = -2i(A_0 \partial_t \phi - D^{-2} \nabla u \cdot \nabla \phi - A_j \partial^j \phi) +  A_{\mu} A^{\mu} \phi  + m^2 \phi\\
& = 2i(A_{\mu} \partial^{\mu} \phi + D^{-2} \nabla u \cdot \nabla \phi) + A_{\mu} A^{\mu} \phi + m^2 \phi\, ,
\end{align*}
which implies
\begin{align*}
&R(A,\phi) \\
& = Im(-\phi 2i(A_{\mu} \partial^{\mu} \overline{\phi} + D^{-2} \nabla u \cdot \nabla \overline{\phi}) + A_{\mu} \partial^{\mu}(|\phi|^2) +|\phi|^2 u + m^2 \phi\\
& = 2 Re(-\phi A_{\mu} \partial^{\mu} \overline{\phi}) -2 Re(\phi D^{-2} \nabla u \cdot \nabla \overline{\phi}) + 2 Re (A_{\mu} \phi \partial^{\mu} \overline{\phi}) + |\phi|^2 u + m^2 \phi\\
& = -2 Re(\phi \nabla \overline{\phi}) \cdot D^{-2} \nabla u + |\phi|^2 u + m^2 \phi\, ,
\end{align*}
so that $u$ fulfills the linear equation
$$ \square u = -2 Re(\phi \nabla \overline{\phi}) \cdot D^{-2} \nabla u + |\phi|^2 u +m^2 \phi\, . $$
The data of $u$ fulfill by (\ref{10}) and (\ref{12}):
$$ u(0) = -\partial_t A_0(0) + \partial^j A_j(0) = - \dot{a}_{00} + \partial^j a_{0j} = 0 $$
and, using that $A$ is a solution of (\ref{16}) and also (\ref{15}):
\begin{align*}
\partial_t u(0) & = -\partial_t^2 A_0(0) + \partial_t \partial^j A_j(0) = -\partial^j \partial_j A_0(0) - j_{0_{|t=0}} + \partial_t \partial^j A_j(0) \\
& = -\partial^j (\partial_j A_0(0) - \partial_t A_j(0)) - j_{0_{|t=0}} = \partial^j F_{0j}(0) -  j_{0_{|t=0}} \\
& = Im(\phi_0 \overline{\phi}_1) - j_{0_{|t=0}} =  j_{0_{|t=0}}- j_{0_{|t=0}} = 0 \, .
\end{align*}
By uniqueness this implies $u = 0$. Thus the Lorenz condition $\partial^{\mu} A_{\mu} =0$ is satisfied. Under the Lorenz condition however we know that $(A,\phi)$ also fulfills (\ref{16}),(\ref{17}). This system is equivalent to (\ref{1}),(\ref{2}), where $F_{\mu \nu} := \partial_{\mu} A_{\nu} - \partial_{\nu} A_{\mu}$, so that we also have that $F_{k0}$  and $F_{kl}$ satisfy (\ref{2.10a}) and (\ref{2.11a}), respectively.\\[0.5em]

What remains to be shown is the following property of the electromagnetic field $F_{\mu \nu}$:
$$
F_{\mu \nu} \in G^{s-1}_T \, . $$
Using well-known results for Bourgain type spaces (\cite{KS}, Thm. 5.4 and Prop. 5.6)  this is reduced to:
\begin{align}
\label{4.10}
&F_{\mu \nu} (0) \in H^{s-1} \\
\label{4.11}
&\partial_t  F_{\mu \nu}(0) \in H^{s-2} \quad \mbox{and} \\
\label{4.12}
&\Lambda_+^{-1} \Lambda_-^{\epsilon-1} \square \,F_{\mu \nu} \in G^{s-1}_T\, ,
\end{align}
where $\square \,F_{\mu \nu} $ is given by (\ref{2.10a}) and (\ref{2.11a}). \\
[0.5em]
Thus we have to prove the following estimates.
\begin{align}
\label{*}
\|\Lambda_+^{-1} \Lambda_-^{\epsilon-1} Q(u,v)\|_{G^{s-1}} & \lesssim \|u\|_{G^s} \|v\|_{G^s} \\
\label{**}
\|\Lambda_+^{-1} \Lambda_-^{\epsilon-1} D_+ (Auv)\|_{G^{s-1}} & \lesssim \|D_+A\|_{F^{r-1}} \|u\|_{G^s} \|v\|_{G^s} \, ,
\end{align}
where $Q=Q_{0i}$ or $Q=Q_{ij}$ .\\[0.4em]
{\bf Proof of (\ref{*}):} We use (\ref{Q2}), where by symmetry we only have to consider the first three terms.
\begin{align}
\label{*1}
\|  \Lambda_+^{-1} \Lambda_-^{-\half-\epsilon} \Lambda_+(uv)\|_{H^{s-2,\half+\epsilon}} & \lesssim \|\Lambda_+^{\half-2\epsilon} u\|_{H^{s-1,\half+\epsilon}} \|v\|_{H^{s-1,\half+\epsilon}} \\
\label{*2}
\|  \Lambda_+^{-1} \Lambda_-^{-1+\epsilon} \Lambda_+(uv)\|_{H^{s-2,\half+\epsilon}} & \lesssim \|\Lambda_+^{\half-2\epsilon} u\|_{H^{s-1,0}} \|v\|_{H^{s-1,\half+\epsilon}} \\
\label{*3}
\|  \Lambda_+^{-1} \Lambda_-^{-1+\epsilon} \Lambda_+(uv)\|_{H^{s-2,\half+\epsilon}} & \lesssim \|\Lambda_+^{\half-2\epsilon} u\|_{H^{s-1,\half+\epsilon}} \|v\|_{H^{s-1,0}} \, .
\end{align}
(\ref{*1}) reduces to
$$\|uv\|_{H^{s-2,0}} \lesssim \|u\|_{H^{s-\half-2\epsilon,\half+\epsilon}} \|v\|_{H^{s-1,\half+\epsilon}} \, .$$
We apply Prop. \ref{Prop.3.6} with parameters $s_0=2-s$ , $s_1=s-\half-2\epsilon$ , $s_2=s-1$ , so that $s_0+s_1+s_2 = s+\half-2\epsilon > \frac{n}{2}-\half$ and $s_1+s_2=2s-\frac{3}{2}-2\epsilon > n-\frac{7}{2}\ge \half$ for $n \ge 4$ under our assumption $s > \frac{n}{2} -1$ . \\
(\ref{*2}) reduces to
$$\|uv\|_{H^{s-2,-\half+2\epsilon}} \lesssim \|u\|_{H^{s-\half-2\epsilon,0}} \|v\|_{H^{s-1,\half+\epsilon}} \, .$$
We apply Cor. \ref{Cor.3.1} with parameters $s_1=2-s$ , $s_0=s-\half-2\epsilon$ , $s_2=s-1$ , so that $s_1+s_2=1$ . \\
(\ref{*3}) reduces to
$$\|uv\|_{H^{s-2,-\half+2\epsilon}} \lesssim \|u\|_{H^{s-\half-2\epsilon,\half+\epsilon}} \|v\|_{H^{s-1,0}} \, .$$
We again apply Cor. \ref{Cor.3.1} with parameters $s_1=2-s$ , $s_2=s-\half-2\epsilon$ , $s_0=s-1$, so that $s_1+s_2=\frac{3}{2}-2\epsilon$ . \\
{\bf Proof of (\ref{**}):} It suffices to prove
$$ \| Auv\|_{H^{s-1,-\half+2\epsilon}} \lesssim \|D A\|_{H^{r-1,\half+\epsilon}}  \| u\|_{H^{s,\half+\epsilon}} \| v\|_{H^{s,\half+\epsilon}} \, .$$
We may replace $D$ by $\Lambda$ and use Cor. \ref{Cor.3.1} , which gives
\begin{align*}
\| A \phi \psi \|_{H^{s-1,-\half+2\epsilon}} & \lesssim \|A\|_{H^{r,\half+\epsilon}} \|\phi \psi\|_{H^{s-\half,0}} \\
& \lesssim \|A\|_{H^{r,\half+\epsilon}} \|\phi\|_{H^{s,\half+\epsilon}}  \psi\|_{H^{s,\half+\epsilon}} 
\end{align*}
with parameters $s_0=s-\half$ , $s_1=r$ , $s_2=1-s$ for the first estimate, so that $s_0+s_1+s_2 = r+\half > \frac{n-1}{2}$ and $s_0+s_1+s_2+s_1+s_2= 2r-s+\frac{3}{2} >\frac{n}{2}$ , because we assume $r > \frac{n}{2}-1$ and $2r-s > \frac{n-3}{2}$ . For the second estimate choose $s_0=\half-s$ , $s_1=s_2=s$ , so that $s_0+s_1+s_2 = s+\half > \frac{n-1}{2}$ and $s_1+s_2 =2s >\half$ . \\[0.5em]

It remains to prove (\ref{4.10}) and (\ref{4.11}). The property (\ref{4.10}) is given by (\ref{8}). Next we prove (\ref{4.11}). By (\ref{1}) we have
$$\partial_t F_{{0k}_{| t=0}} = - \partial_t F_{{k0}_{|t=0}} = - \partial^l F_{{kl}_{| t=0}} + j_{k_{| t=0}} \, . $$ 
By (\ref{8}) we have $\partial^l F_{{kl}_{| t=0}} \in H^{s-2}$. It remains to prove 
$$ j_{k_{| t=0}} = Im(\phi_0 \overline{\partial_k \phi_0}) + |\phi_0|^2 a_{0k} \in H^{s-2} \, . $$
First we obtain
$$ \|\phi_0 \overline{\partial_k \phi_0} \|_{H^{s-2}} \lesssim \|\phi_0\|_{H^s} \| \partial_k \phi_0\|_{H^{s-1}} < \infty $$
by Prop. \ref{SML}, because $s > \frac{n}{2} -1$ .

Concerning the term $|\phi_0|^2 a_{0k}$ we prove \\
{\bf Claim:}
 \begin{align*} \| |\phi_0|^2 a_{0k}\|_{H^{s-2}} & \lesssim \|\phi_0\|_{H^s}^2 \|Da_{0k}\|_{H^{r-1}} 
\end{align*}
\begin{proof} If $r \le \frac{n}{2}$ we obtain by Prop. \ref{SML} :
$$ \| |\phi_0|^2 a_{0k}\|_{H^{s-2}} \lesssim \| |\phi_0|^2\|_{H^{\frac{n}{2}-2+s-r+}} \|a_{0k}\|_{H^r} \lesssim \| \phi_0\|_{H^{s}}^2 \|a_{0k}\|_{H^r} \, , $$
where the first estimate directly follows from Prop. \ref{SML}, and the last estimate requires
$ s \ge \frac{n}{2}-2+s-r+ $ and $-\frac{n}{2}+2-s+r+2s > \frac{n}{2}$ , which hold by the assumptions $s,r >\frac{n}{2}-1$ . In the case $r > \frac{n}{2}$ we obtain by Prop. \ref{SML}
$$ \| |\phi_0|^2 a_{0k}\|_{H^{s-2}} \lesssim \| |\phi_0|^2\|_{H^{s-2}} \|a_{0k}\|_{H^r} \lesssim \| \phi_0\|_{H^{s}}^2 \|a_{0k}\|_{H^r} \, , $$
assuming $ s > \frac{n}{2}-1$ .
\end{proof}

In the case of high frequencies $\ge 1$ of $a_{0k}$ this is enough for our claim to hold. Otherwise we have equivalent frequencies of $|\phi_0|^2 a_{0k}$ and $|\phi_0|^2$ , so that we obtain:
\begin{align*}
\| |\phi_0|^2 a_{0k}\|_{H^{s-2}} & \lesssim  \|\Lambda^{s-2}(|\phi_0|^2) a_{0k}\|_{L^2} \lesssim \|\Lambda^{s-2}(|\phi_0|^2)\|_{L^2} \|a_{0k}\|_{L^{\infty}} \\
& \lesssim \|\phi_0|^2\|_{H^{s-2}} \|D a_{0k}\|_{H^{\frac{n}{2}-1-}} \lesssim \|\phi_0\|_{H^s}^2 \|Da_{0k}\|_{H^{r-1}}\, ,
\end{align*}
which completes the proof.\\[0.2em]
Moreover
$$\partial_t F_{jk} = \partial_0(\partial_j A_k - \partial_k A_j) = \partial_j(\partial_k A_0 + F_{0k}) - \partial_k(\partial_j A_0 + F_{0j}) = \partial_j F_{0k} - \partial_k F_{0j} \, ,$$
so that by (\ref{8}) we obtain
$$ \partial_t F_{{jk}_{| t=0}} = \partial_j F_{{0k}_{| t=0}} - \partial_k F_{{0j}_{| t=0}} \in H^{s-2} \, . $$
The proof of Theorem \ref{Theorem1} is now complete.

\end{document}